\definecolor{qqqqff}{rgb}{0.,0.,1.}
\definecolor{ffqqqq}{rgb}{1.,0.,0.}
\definecolor{qqffff}{rgb}{0.,1.,1.}
\definecolor{ffffff}{rgb}{1.,1.,1.}
\theoremstyle{plain}
\newtheorem{theorem}{Theorem}[section]
\newtheorem{lemma}[theorem]{Lemma}
\newtheorem{proposition}[theorem]{Proposition}
\newtheorem{corollary}[theorem]{Corollary}
\theoremstyle{definition}
\numberwithin{equation}{section}
\newcommand{\B}{{\mathbb B}}
\newcommand{\BC}{{\mathbb C}}
\newcommand{\BF}{{\mathbb F}}
\newcommand{\BR}{{\mathbb R}}
\newcommand{\cF}{{\mathcal F}}
\newcommand{\cH}{{\mathcal H}}
\newcommand{\cK}{{\mathcal K}}\newcommand{\cL}{{\mathcal L}}
\newcommand{\cU}{{\mathcal U}}
\newcommand{\cX}{{\mathcal X}}
\newcommand{\cY}{{\mathcal Y}}\newcommand{\cZ}{{\mathcal Z}}
\newcommand{\la}{\lambda}
\newcommand{\rank}{\operatorname{rank}}
\newcommand{\ord}{\operatorname{ord}}
\newcommand{\im}{\operatorname{Im}}
\newcommand{\kr}{\operatorname{Ker}}
\newcommand{\diag}{\operatorname{diag}}
\newcommand{\PSD}{\operatorname{PSD}}
\newcommand{\spn}{\operatorname{span}}
\newcommand{\coker}{\operatorname{Coker}}
\newcommand{\mat}[2]{\ensuremath{\left[\begin{array}{#1}#2\end{array}\right]}}
\newcommand{\sbm}[1]{\left[\begin{smallmatrix}#1\end{smallmatrix}\right]}
\newcommand{\ol}[1]{{\overline{#1}}}
\newcommand{\wtil}[1]{\widetilde{#1}}
\newcommand{\what}[1]{\widehat{#1}}
\newcommand{\BB}[1]{\mathbb{#1}}
\newcommand{\cal}[1]{\mathcal{#1}}
\newcommand{\half}{\frac{1}{2}}
\newcommand{\ands}{\quad\mbox{and}\quad}
\begin{document}

\title{Graphs with sparsity order at most two: The complex case}

\author{S. ter Horst}
\address{S. ter Horst, Department of Mathematics, Unit for BMI,
North-West University, Potchefstroom 2531, South Africa}
\email{Sanne.TerHorst@nwu.ac.za}

\author{E.M. Klem}
\address{E.M. Klem, Department of Mathematics, Unit for BMI, North-West University,
Potchefstroom, 2531 South Africa}
\email{estiaanklem@gmail.com}

\thanks{This work is based on the research supported in part by the National Research Foundation of South Africa (Grant Numbers 93406, and 89833).}

\begin{abstract}
The sparsity order of a (simple undirected) graph is the highest possible rank (over $\BR$ or $\BC$) of the extremal elements in the matrix cone that consists of positive semidefinite matrices with prescribed zeros on the positions that correspond to non-edges of the graph (excluding the diagonal entries). The graphs of sparsity order 1 (for both $\BR$ and $\BC$) correspond to chordal graphs, those graphs that do not contain a cycle of length greater than three, as an induced subgraph, or equivalently, is a clique-sum of cliques. There exist analogues, though more complicated, characterizations of the case where the sparsity order is at most 2, which are different for $\BR$ and $\BC$. The existing proof for the complex case, is based on the result for the real case. In this paper we provide a more elementary proof of the characterization of the graphs whose complex sparsity order is at most two. Part of our proof relies on a characterization of the $\{P_4,\ol{K}_3\}$-free graphs, with $P_4$ the path of length 3 and $\ol{K}_3$ the stable set of cardinality 3, and of the class of clique-sums of such graphs.
\end{abstract}

\subjclass[2010]{Primary 05C75; Secondary 05C50, 47L07, 15B57}

\keywords{Sparsity order, matrix cones, forbidden subgraphs}

\maketitle


\section{Introduction}\label{S:intro}

Let $G=(V,E)$ be a simple undirected graph with vertex set $V=\{1,\ldots n\}$ and edge set $E\subset V\times V$. Let $\BB{F}$ be either $\BB{C}$ or $\BB{R}$. We write $\cal{H}_G$ for the linear space over $\BB{R}$ that consists of Hermitian $n\times n$ matrices over $\BB{F}$ (hence symmetric in case $\BB{F}=\BB{R}$) with the property that for $i\neq j$ the $(i,j)$-th entry is equal to 0 whenever $(i,j)\notin E$. With $\PSD_G$ we denote the set of positive semidefinite matrices in $\cal{H}_G$. Then $\PSD_G$ forms a convex cone in $\cal{H}_G$, i.e., $\PSD_G$ is closed under sums and multiplication by positive scalars. Furthermore, any element of $\PSD_G$ can be written as a finite sum of extremal elements of $\PSD_G$. Recall that an element $X$ of $\PSD_G$ is called {\em extremal} whenever the only way to have $X=X'+X''$ with $X',X''\in \PSD_G$ is when $X'$ and $X''$ are positive scalar multiples of $X$. In particular, the only way to write an extremal element $X$ of $\PSD_G$ as a sum of extremals, is if all extremals are positive scalar multiples of $X$ itself. The {\em sparsity order} of the graph $G$, as introduced in \cite{AHMR88}, is defined to be the maximum rank among all extremal elements of $\PSD_G$ and is denoted by $\ord_\BB{F}(G)$. In other words, any element of $\PSD_G$ can be written as a sum of (extremal) elements of $\PSD_G$ with rank at most $\ord_\BB{F}(G)$, and there are elements of $\PSD_G$ for which this is the best that can be achieved.

The notation and terminology used in this paper is mostly standard. However, at the end of the introduction we will recall some graph theory preliminaries, mostly notation, used throughout the paper.\medskip

\paragraph{\bf Sparsity order 1}
In case $G$ is a complete graph $K_n$, i.e., $E=V\times V$, $\PSD_G$ is simply the cone of $n\times n$ positive semidefinite matrices, and using the singular value decomposition it is easily seen that any positive semidefinite matrix can be written as a sum of rank-one positive semidefinite matrices. Hence the sparsity order is equal to 1, for both $\BB{F}=\BB{C}$ and $\BB{F}=\BB{R}$.

The complete characterization of graphs that have sparsity order 1 was presented in \cite{AHMR88}. It turns out that, again for both $\BB{F}=\BB{C}$ and $\BB{F}=\BB{R}$, a graph $G$ has sparsity order equal to 1 if and only if $G$ is chordal, that is, if and only if $G$ has no induced cycles of length greater than three. This result is closely connected to the positive semidefinite matrix completion problem. The connection between the latter problem and chordal graphs was first observed in \cite{GJSW84}. Viewing $\cal{H}_G$ as a Hilbert space over $\BB{R}$ with the trace inner product, the dual cone of $\PSD_G$ in $\cal{H}_G$ consists precisely of the elements of $\cal{H}_G$ that can be `completed' to a positive semidefinite matrix by replacing the prescribed zeros with other elements of $\BB{F}$. For this to work, it is essential that the elements of $\PSD_G$ can be written as sums of rank one elements from $\PSD_G$. See \cite{BW11} for a recent account, and the references given there for further background. In view of the results to come, we mention here that $G$ is chordal if and only if $G$ can be written as a clique-sum of cliques; a result that goes back to work of Dirac \cite{D61}. The results discussed above that are relevant in the sequel are listed in the following theorem.

\begin{theorem}\label{T:SO1}
Let $G$ be a simple undirected graph. For both $\BB{F}=\BB{C}$ and $\BB{F}=\BB{R}$ the following are equivalent:
\begin{itemize}
	\item[(i)] $G$ has sparsity order 1, i.e., $\ord_\BB{F}(G)=1$;

	\item[(ii)] $G$ is chordal, i.e., for $n\geq 4$ the cycle $C_n$ does not appear as an induced subgraph of $G$;

	\item[(iii)] $G$ is a clique-sum of cliques.
\end{itemize}
\end{theorem}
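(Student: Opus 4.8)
The plan is to prove the equivalence of the three statements by establishing the cycle of implications (iii)$\Rightarrow$(ii)$\Rightarrow$(i)$\Rightarrow$(iii), or rather, by treating (ii)$\Leftrightarrow$(iii) as a known graph-theoretic fact and concentrating on the link with the sparsity order. The equivalence (ii)$\Leftrightarrow$(iii) is classical: a clique-sum of cliques is easily seen by induction to have no induced $C_n$ for $n\geq 4$, since any induced cycle would have to lie on one side of a clique-cut or pass through the separating clique, and in either case the chord is forced; conversely, a chordal graph admits a perfect elimination ordering (equivalently, a simplicial vertex), and peeling off simplicial vertices exhibits $G$ as a clique-sum of cliques. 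I would cite Dirac \cite{D61} for this and give only a one-line reminder rather than a full argument.

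The substantive content is (ii)$\Rightarrow$(i) and (i)$\Rightarrow$(ii), and I would route both through the completion picture sketched in the introduction. For (iii)$\Rightarrow$(i): write $G$ as a clique-sum and induct on the number of clique summands. The base case is a single clique $K_m$, where $\PSD_G$ is the full PSD cone and the singular value (spectral) decomposition writes every element as a sum of rank-one PSD matrices, so every extremal has rank $1$. For the inductive step, if $G=G_1\cup G_2$ glued along a common clique $S=V(G_1)\cap V(G_2)$, one shows that an extremal $X\in\PSD_G$ restricts to PSD matrices on the blocks indexed by $V(G_1)$ and $V(G_2)$; using the chordal matrix-completion theorem (the clique-sum structure guarantees that the principal submatrices on $V(G_1)$ and $V(G_2)$ determine $X$ and that compatible PSD completions exist and factor), one decomposes $X$ as a sum of rank-one elements of $\PSD_G$, forcing $\rank X=1$ by extremality. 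This is where the connection to the PSD completion problem and the dual-cone description from the introduction does the real work.

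For (i)$\Rightarrow$(ii), I would prove the contrapositive: if $G$ contains an induced cycle $C_n$ with $n\geq 4$, exhibit an extremal element of $\PSD_G$ of rank $\geq 2$. The natural candidate is built from the cycle itself — one takes a rank-$2$ PSD matrix supported on the cycle (for instance, the Gram matrix of $n$ unit vectors in $\BR^2$ placed at consecutive angles $2\pi/n$, so that consecutive vectors are non-orthogonal but the matrix has the prescribed zeros on the non-edges of $C_n$), extends it by zeros to all of $V$, and argues it is extremal in $\PSD_G$. Extremality is checked by showing that any PSD $Y$ with $X-Y\in\PSD_G$ must have the same kernel-on-the-cycle structure, which pins $Y$ down to a scalar multiple of $X$; the induced (chordless) condition is exactly what prevents extra freedom from chords. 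Since $C_n$ is induced, this extremal lives genuinely inside $\PSD_G$ and has rank $2>1$, so $\ord_\BB{F}(G)\geq 2$.

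The main obstacle I anticipate is the inductive clique-sum step in (iii)$\Rightarrow$(i): making precise that an extremal of $\PSD_G$ decomposes compatibly across a clique cut requires the chordal PSD-completion machinery (essentially a Schur-complement / factorization argument across the separating clique $S$), and one has to handle the possibility that the restrictions to $V(G_1)$ and $V(G_2)$ are singular, choosing the rank-one pieces so that they agree on the overlap $S$ and hence patch to genuine elements of $\PSD_G$. The field $\BB{F}$ plays no essential role here — the spectral decomposition and Schur complements work identically over $\BR$ and $\BC$ — which is why the same statement holds in both cases, and I would remark on this explicitly. The construction in (i)$\Rightarrow$(ii) is more routine but still needs a careful extremality verification; I would isolate that as a short lemma about rank-$2$ cycle matrices.
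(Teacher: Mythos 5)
First, a point of reference: the paper does not prove Theorem \ref{T:SO1} at all. It is quoted as a known result, with the sparsity-order characterization attributed to \cite{AHMR88} (and, via cone duality, to the completion theorem of \cite{GJSW84}) and the equivalence (ii)$\Leftrightarrow$(iii) to Dirac \cite{D61}. So your proposal is being measured against the literature rather than against an argument in the text. Your treatment of (ii)$\Leftrightarrow$(iii) and of (iii)$\Rightarrow$(i) is sound as a sketch: the base case is the spectral decomposition, and the inductive step is exactly the clique-sum decomposition lemma --- if $G$ is glued from $G_1$ and $G_2$ along a clique $S$, the Schur complement of the block indexed by $V(G_1)\setminus S$ splits any $X\in\PSD_G$ as $X_1+X_2$ with $X_i\in\PSD_G$ supported on $V(G_i)$ --- which is the content of Lemma \ref{L:OrdF}(c), i.e.\ \cite[Theorem 3.1]{HPR89}. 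You correctly identify this as the step carrying the weight.

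The genuine gap is in (i)$\Rightarrow$(ii). The Gram matrix of $n$ unit vectors in $\BR^2$ at consecutive angles $2\pi/n$ has $(i,j)$ entry $\cos\bigl(2\pi(i-j)/n\bigr)$, and this does \emph{not} lie in $\PSD_{C_n}$: for $C_4$ the zeros land on the edges (consecutive vectors are orthogonal) while the non-edges receive $\pm1$ --- exactly the wrong pattern. Worse, for $n\geq 5$ no rank-$2$ element of $\PSD_{C_n}$ with nonzero diagonal exists at all: vertex $1$ is non-adjacent to $3,\ldots,n-1$, so $v_3,\ldots,v_{n-1}$ must all lie in the one-dimensional space $v_1^{\perp}\subset\BF^2$, and chasing the remaining orthogonality constraints around the cycle forces $v_2,v_n\in\spn(v_1)$ while also requiring $v_2\perp v_n$, so one of them vanishes. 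Hence the ``short lemma about rank-$2$ cycle matrices'' you plan to isolate is false for $n\geq 5$, and you cannot reduce to $C_4$ either, since $C_4$ is not an induced subgraph of $C_n$ for $n\geq 5$. Two standard repairs: (a) for $C_4$, the Gram matrix of $e_1,\ (e_1+e_2)/\sqrt{2},\ e_2,\ (e_1-e_2)/\sqrt{2}$ is a genuine rank-$2$ extremal of $\PSD_{C_4}$ (any $0\leq R\leq I_2$ with $e_1^*Re_2=0$ and $v_2^*Rv_4=0$ is a scalar multiple of $I_2$), but for $n\geq 5$ one must build a higher-rank extremal; or, cleaner and uniform in $n$, (b) actually run the duality you allude to: a partially positive definite, non-completable matrix on $C_n$ (unit diagonal, entries close to $1$ on all edges except close to $-1$ on one edge, as in \cite{GJSW84}) must pair strictly negatively with some $X\in\PSD_{C_n}$ under the trace inner product, whereas every clique-supported rank-one $ww^*$ pairs nonnegatively with any partially positive semidefinite matrix; that $X$ is therefore not a sum of rank-one elements of $\PSD_{C_n}$, giving $\ord_\BF(C_n)\geq 2$, and Lemma \ref{L:OrdF}(a) finishes the contrapositive. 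With one of these substituted, your outline closes.
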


\paragraph{\bf Sparsity order at most 2}
There exists a characterization for the case where $\ord_\BB{F}(G)\leq 2,$ analogous to that for $\ord_\BB{F}(G)=1$ in Theorem \ref{T:SO1}. However, this result is significantly more complicated, and the characterizations differ for the cases $\BB{F}=\BB{C}$ and $\BB{F}=\BB{R}$. The following result was obtained by Laurent in \cite[Theorem 13]{L01} for the case $\BB{F}=\BB{C}$.

\begin{theorem}\label{T:SO2complex}
Let $G$ be a simple undirected graph. Then the following are equivalent:
	\begin{itemize}
	\item[(i)] $G$ has complex sparsity order less than or equal to 2, i.e., $\ord_\BB{C}(G)\leq 2$;

	\item[(ii)] $G$ does not contain the cycles $C_n,$ $n\geq 5,$ as induced subgraphs, nor any of the graphs $D_1-D_6$ depicted in Figure \ref{F:D1toD6}.

	\begin{figure}[ht]
		\centering
		\begin{tikzpicture}[scale=0.75,line cap=round,line join=round,>=triangle 45,x=1.0cm,y=1.0cm]
		\clip(0.5,0.) rectangle (15.,4.);
		\draw (1.,1.)-- (1.,1.5);
		\draw (1.,1.5)-- (1.,2.);
		\draw (1.,2.)-- (1.,2.5);
		\draw (1.,2.5)-- (1.,3.);
		\draw (1.,3.)-- (1.,3.5);
		\draw (2.5,2.)-- (3.,1.5);
		\draw (3.,1.5)-- (3.5,1.5);
		\draw (3.5,1.5)-- (4.,2.);
		\draw (4.,2.)-- (3.5,2.5);
		\draw (3.5,2.5)-- (3.,2.5);
		\draw (2.5,2.)-- (3.,2.5);
		\draw (5.5,2.)-- (5.5,1.5);
		\draw (5.5,1.5)-- (5.5,1.);
		\draw (5.5,2.)-- (5.5,2.5);
		\draw (6.,2.)-- (6.,1.5);
		\draw (7.5,1.5)-- (7.75,2.);
		\draw (7.75,2.)-- (8.,1.5);
		\draw (8.,1.5)-- (7.5,1.5);
		\draw (8.5,2.)-- (8.5,1.5);
		\draw (1.15,0.6741665014673589) node[anchor=north] {$\ol{D}_1$};
		\draw (3.4,0.6741665014673589) node[anchor=north] {$\ol{D}_2$};
		\draw (5.85,0.6741665014673589) node[anchor=north] {$\ol{D}_3$};
		\draw (8.15,0.6741665014673589) node[anchor=north] {$\ol{D}_4$};
		\draw (10.25,2.)-- (10.,1.5);
		\draw (10.25,2.)-- (10.5,1.5);
		\draw (11.,2.)-- (11.,1.5);
		\draw (11.5,2.)-- (11.5,1.5);
		\draw (13.,2.)-- (13.,1.5);
		\draw (13.5,2.)-- (13.5,1.5);
		\draw (14.,2.)-- (14.,1.5);
		\draw (14.5,2.)-- (14.5,1.5);
		\draw (10.9,0.6741665014673589) node[anchor=north] {$\ol{D}_5$};
		\draw (13.85,0.6741665014673589) node[anchor=north] {$\ol{D}_6$};
		\begin{scriptsize}
		\draw [fill=black] (1.,1.) circle (1.5pt);
		\draw [fill=black] (1.,1.5) circle (1.5pt);
		\draw [fill=black] (1.,2.) circle (1.5pt);
		\draw [fill=black] (1.,2.5) circle (1.5pt);
		\draw [fill=black] (1.,3.) circle (1.5pt);
		\draw [fill=black] (1.,3.5) circle (1.5pt);
		\draw [fill=black] (2.5,2.) circle (1.5pt);
		\draw [fill=black] (3.,1.5) circle (1.5pt);
		\draw [fill=black] (3.5,1.5) circle (1.5pt);
		\draw [fill=black] (4.,2.) circle (1.5pt);
		\draw [fill=black] (3.5,2.5) circle (1.5pt);
		\draw [fill=black] (3.,2.5) circle (1.5pt);
		\draw [fill=black] (5.5,2.) circle (1.5pt);
		\draw [fill=black] (5.5,1.5) circle (1.5pt);
		\draw [fill=black] (5.5,1.) circle (1.5pt);
		\draw [fill=black] (5.5,2.5) circle (1.5pt);
		\draw [fill=black] (6.,2.) circle (1.5pt);
		\draw [fill=black] (6.,1.5) circle (1.5pt);
		\draw [fill=black] (7.5,1.5) circle (1.5pt);
		\draw [fill=black] (7.75,2.) circle (1.5pt);
		\draw [fill=black] (8.,1.5) circle (1.5pt);
		\draw [fill=black] (8.5,2.) circle (1.5pt);
		\draw [fill=black] (8.5,1.5) circle (1.5pt);
		\draw [fill=black] (10.25,2.) circle (1.5pt);
		\draw [fill=black] (10.,1.5) circle (1.5pt);
		\draw [fill=black] (10.5,1.5) circle (1.5pt);
		\draw [fill=black] (11.,2.) circle (1.5pt);
		\draw [fill=black] (11.,1.5) circle (1.5pt);
		\draw [fill=black] (11.5,2.) circle (1.5pt);
		\draw [fill=black] (11.5,1.5) circle (1.5pt);
		\draw [fill=black] (13.,2.) circle (1.5pt);
		\draw [fill=black] (13.,1.5) circle (1.5pt);
		\draw [fill=black] (13.5,2.) circle (1.5pt);
		\draw [fill=black] (13.5,1.5) circle (1.5pt);
		\draw [fill=black] (14.,2.) circle (1.5pt);
		\draw [fill=black] (14.,1.5) circle (1.5pt);
		\draw [fill=black] (14.5,2.) circle (1.5pt);
		\draw [fill=black] (14.5,1.5) circle (1.5pt);
		\end{scriptsize}
		\end{tikzpicture}	
		\caption{The complementary graphs $\ol{D}_1-\ol{D}_6$}\label{F:D1toD6}
	\end{figure}
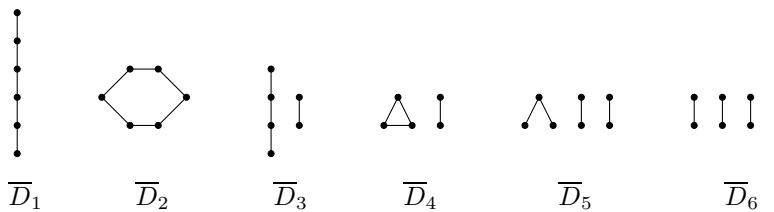
	
	\item[(iii)] $G$ is a clique-sum of graphs from the classes $\cal{K}_2$ and $\cal{K}_3$ depicted
in Figure \ref{F:K2K3}.

	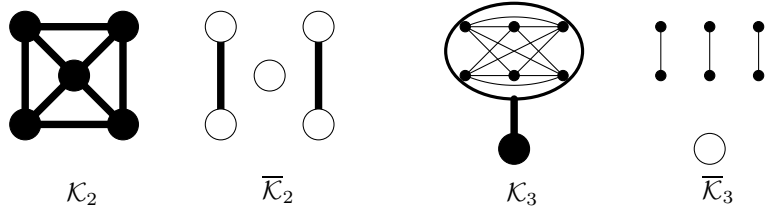
\begin{figure}[ht]
		\centering
		\begin{tikzpicture}[scale=1.3,line cap=round,line join=round,>=triangle 45,x=1.0cm,y=1.0cm]
		\clip(0.5,0.1) rectangle (9.,2.5);
		\draw [line width=2.8pt] (1.,2.)-- (1.5,1.5);
		\draw [line width=2.8pt] (1.5,1.5)-- (1.,1.);
		\draw [line width=2.8pt] (1.,2.)-- (1.,1.);
		\draw [line width=2.8pt] (1.,1.)-- (2.,1.);
		\draw [line width=2.8pt] (2.,1.)-- (1.5,1.5);
		\draw [line width=2.8pt] (2.,1.)-- (2.,2.);
		\draw [line width=2.8pt] (1.5,1.5)-- (2.,2.);
		\draw [line width=2.8pt] (2.,2.)-- (1.,2.);
		\draw [shift={(6.,0.8)}] plot[domain=1.1760052070951352:1.965587446494658,variable=\t]({1.*1.3*cos(\t r)+0.*1.3*sin(\t r)},{0.*1.3*cos(\t r)+1.*1.3*sin(\t r)});
		\draw [shift={(6.,2.7)}] plot[domain=4.317597860684927:5.1071801000844514,variable=\t]({1.*1.3*cos(\t r)+0.*1.3*sin(\t r)},{0.*1.3*cos(\t r)+1.*1.3*sin(\t r)});
		\draw [rotate around={0.:(6.,1.75)},line width=1.2pt] (6.,1.75) ellipse (0.7cm and 0.48989794855664553cm);
		\draw [line width=2.8pt] (6.,0.75)-- (6.,1.2601020514433545);		
		\draw (7.5,2.)-- (7.5,1.5);
		\draw (8.,2.)-- (8.,1.5);
		\draw (8.5,2.)-- (8.5,1.5);
		\draw (5.5,2.)-- (6.,1.5);
		\draw (5.5,1.5)-- (6.,2.);
		\draw (6.,2.)-- (6.5,1.5);
		\draw (6.5,1.5)-- (5.5,2.);
		\draw (5.5,2.)-- (6.,2.);
		\draw (6.,2.)-- (6.5,2.);
		\draw (6.5,2.)-- (6.,1.5);
		\draw (6.,1.5)-- (5.5,1.5);
		\draw (5.5,1.5)-- (6.5,2.);
		\draw (6.5,1.5)-- (6.,1.5);
		\draw (1.325,0.5) node[anchor=north west] {$\cK_2$};
		\draw (5.825,0.5) node[anchor=north west] {$\cK_3$};
		\draw (3.325,0.575) node[anchor=north west] {$\ol\cK_2$};
		\draw (7.825,0.575) node[anchor=north west] {$\ol\cK_3$};
		\draw [line width=2.8pt] (3.,2.)-- (3.,1.);
		\draw [line width=2.8pt] (4.,2.)-- (4.,1.);
		\begin{scriptsize}
		\draw [fill=black] (1.,2.) circle (4.5pt);
		\draw [fill=black] (1.,1.) circle (4.5pt);
		\draw [fill=black] (2.,2.) circle (4.5pt);
		\draw [fill=black] (2.,1.) circle (4.5pt);
		\draw [fill=black] (1.5,1.5) circle (4.5pt);
		\draw [fill=black] (5.5,2.) circle (1.5pt);
		\draw [fill=black] (5.5,1.5) circle (1.5pt);
		\draw [fill=black] (6.,2.) circle (1.5pt);
		\draw [fill=black] (6.,1.5) circle (1.5pt);
		\draw [fill=black] (6.5,2.) circle (1.5pt);
		\draw [fill=black] (6.5,1.5) circle (1.5pt);
		\draw [fill=black] (6.,0.75) circle (4.5pt);
		\draw [fill=ffffff] (3.,2.) circle (4.5pt);
		\draw [fill=ffffff] (4.,2.) circle (4.5pt);
		\draw [fill=ffffff] (3.,1.) circle (4.5pt);
		\draw [fill=ffffff] (4.,1.) circle (4.5pt);
		\draw [fill=black] (7.5,2.) circle (1.5pt);
		\draw [fill=black] (7.5,1.5) circle (1.5pt);
		\draw [fill=black] (8.,2.) circle (1.5pt);
		\draw [fill=black] (8.,1.5) circle (1.5pt);
		\draw [fill=black] (8.5,2.) circle (1.5pt);
		\draw [fill=black] (8.5,1.5) circle (1.5pt);
		\draw [fill=ffffff] (3.5,1.5) circle (4.5pt);
		\draw [fill=ffffff] (8.,0.75) circle (4.5pt);
		\end{scriptsize}
		\end{tikzpicture}
		\caption{The graph classes $\cal{K}_2,$ $\cal{K}_3$ and their complementary classes}\label{F:K2K3}
	\end{figure}
\end{itemize}
\end{theorem}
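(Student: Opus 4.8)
The plan is to prove the cyclic chain of implications (i)$\Rightarrow$(ii)$\Rightarrow$(iii)$\Rightarrow$(i). The two implications (i)$\Rightarrow$(ii) and (iii)$\Rightarrow$(i) are matrix-analytic and rest only on standard properties of the sparsity order, whereas (ii)$\Rightarrow$(iii) carries all of the graph-theoretic content and is the reason the characterization of $\{P_4,\ol{K}_3\}$-free graphs, and of the class of their clique-sums, is developed beforehand.

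For (i)$\Rightarrow$(ii) I would first record that $\ord_\BC$ is monotone under induced subgraphs: if $H$ is an induced subgraph of $G$ on the vertex set $S$ and $X\in\PSD_H$ is extremal of rank $r$, then the matrix obtained by padding $X$ with zeros outside $S\times S$ is extremal in $\PSD_G$ of rank $r$, because any summand $A\in\PSD_G$ of that matrix has $A_{ii}=0$ for $i\notin S$ and hence vanishes on all rows and columns indexed outside $S$. Granting this, it suffices to exhibit, for each forbidden graph $C_n$ ($n\ge5$) and $D_1,\dots,D_6$, an extremal element of rank $3$ in the corresponding cone; for the cycles this is a single construction valid for all $n\ge5$, and for the six sporadic graphs a finite verification. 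It is exactly here that the complex case diverges from the real one, in which the cycle $C_5$ still has sparsity order $2$. Then any $G$ containing one of these graphs as an induced subgraph has $\ord_\BC(G)\ge3$, which is (i)$\Rightarrow$(ii) in contrapositive form. For (iii)$\Rightarrow$(i) I would use that the sparsity order is well behaved under clique-sums, $\ord_\BC(G_1\cup_S G_2)=\max\{\ord_\BC(G_1),\ord_\BC(G_2)\}$ when $S$ is a clique shared by $G_1$ and $G_2$, so that $\{G:\ord_\BC(G)\le2\}$ is closed under clique-sums just as chordality is in Theorem~\ref{T:SO1}. It then remains to check $\ord_\BC(G)\le2$ for the members $G$ of the explicit base families $\cal{K}_2$ and $\cal{K}_3$ of Figure~\ref{F:K2K3}. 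These are transparent — essentially vertex blow-ups of $W_4$ and of $K_{1,2,2,2}$, equivalently the graphs that arise as a join of graphs each of which is a disjoint union of at most two cliques — so the verification reduces to showing that any $X\in\PSD_G$ of rank at least $3$, for such a $G$, splits as a sum of two non-proportional elements of $\PSD_G$; this is read off from the block decomposition that the prescribed zeros and the large clique(s) of the join impose on $X$.

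The real work is in (ii)$\Rightarrow$(iii), which I expect to be the main obstacle. I would run the clique-cutset decomposition: every graph is built from its atoms — the inclusion-maximal induced subgraphs having no clique cutset — by a sequence of clique-sums, and since condition (ii) is hereditary, every atom of a graph $G$ satisfying (ii) again satisfies (ii). It therefore suffices to prove that an atom satisfying (ii) is a clique or lies in $\cal{K}_2\cup\cal{K}_3$, and here the structure theorem is used twice: first one shows that such an atom is $\{P_4,\ol{K}_3\}$-free, and then the characterization of $\{P_4,\ol{K}_3\}$-free graphs (they are precisely the graphs that arise as a join of graphs each a disjoint union of at most two cliques, which is what the families $\cal{K}_2$ and $\cal{K}_3$ comprise) identifies it. The crux is the first claim, again in contrapositive form: an atom that contains an induced $P_4$ (a path $a\,b\,c\,d$) or an induced $\ol{K}_3$ (a stable triple) must contain one of $C_n$ ($n\ge5$), $D_1,\dots,D_6$ as an induced subgraph. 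Starting from such a seed, one exploits the non-existence of a clique cutset to produce induced paths linking the relevant vertices while avoiding prescribed cliques, and then carries out a case analysis on the adjacency pattern between the seed and the connecting vertices; in each case either a long induced cycle or one of the six small obstructions appears. Organising, and then exhausting, this case analysis is where the length of the proof lies.
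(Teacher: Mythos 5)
Your three-implication architecture coincides with the paper's: (i)$\Rightarrow$(ii) via heredity of the sparsity order under induced subgraphs plus rank-$3$ extremals on the obstructions, (iii)$\Rightarrow$(i) via invariance under clique-sums plus a check on the base classes, and (ii)$\Rightarrow$(iii) via the clique-cutset decomposition into atoms and a structure theorem for $\{P_4,\ol{K}_3\}$-free graphs. There is, however, a genuine gap in (ii)$\Rightarrow$(iii). The $\{P_4,\ol{K}_3\}$-free graphs are \emph{not} the graphs comprising the classes $\cK_2$ and $\cK_3$: they form the strictly larger class $\cF=\bigcup_{k\geq0}\cF_k$ of joins of \emph{arbitrarily many} graphs each a disjoint union of at most two cliques (Theorem \ref{T:graphstruc1}). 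For example $D_6=\ol{4K_2}$ is $\{P_4,\ol{K}_3\}$-free, has no clique cut-set, and has complex sparsity order $3$; the same goes for $D_5$. So once you have shown that an atom satisfying (ii) is $\{P_4,\ol{K}_3\}$-free, you have only placed it in $\cF$, and a further argument --- using precisely the exclusion of $D_5$ and $D_6$, which your plan lists among the obstructions but never actually invokes in this direction --- is required to force at most three joined factors and, when there are three, to collapse the paired cliques to singletons. Without that step the implication is not proved. (Your expectation that the hard part is showing that an atom containing $P_4$ or $\ol{K}_3$ contains one of $C_n$, $D_1,\dots,D_4$ is accurate; that is Theorem \ref{T:graphstruc2} and occupies most of Section \ref{S:grapgstruc}.)

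For (iii)$\Rightarrow$(i), the claim that the rank bound for $G\in\cK_2\cup\cK_3$ ``is read off from the block decomposition'' conceals the one genuinely nontrivial analytic step. For $\cK_2$ one must first prove that extremality forces the off-diagonal block $C$ in \eqref{XdecF2} to be unitary with all four sub-blocks of size $p/2$, and then one needs the nonobvious fact (Proposition \ref{P:dim2subs}) that $\BF^p$ always admits a two-dimensional subspace splitting simultaneously over the two orthogonal decompositions determined by the prescribed zeros; the orthogonal projection onto that subspace is what yields the decomposition $X=X_1+X_2$ and hence $\rank X=2$. For $\cK_3$ a different device is needed (a count of real dimensions of the solution space of $W_{i,1}^*BW_{i,2}=0$, $i=1,2,3$, over Hermitian $B$). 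Neither falls out of the zero pattern by inspection, so this part of your plan is an unproved assertion rather than a proof. Finally, a small factual correction: $\ord_\BR(C_5)\geq 3$ as well --- the cycles $C_n$, $n\geq5$, are excluded in Laurent's real characterization too --- so the real/complex divergence lies in the sporadic obstructions and base classes, not in the cycles.
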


The classes of graphs $\cal{K}_2$ and $\cal{K}_3$, their complements $\ol{\cal{K}}_2$ and $\ol{\cal{K}}_3$, and subsequent graph classes below are to be interpreted in the following way. A small dot indicates a single vertex, a dark circle indicates a clique, while a white circle indicates a stable set; edges are indicated by thin lines, while a thick line between two spheres or between two sets of vertices implies that every vertex in one set is adjacent to every vertex in the other set.

The graphs listed in Figure \ref{F:D1toD6} are referred to as the (complex) 3-blocks; see \cite{AHMR88} where this terminology originates from. They are exactly the graphs with (complex) sparsity order 3, for which each induced subgraph has strictly smaller sparsity order.

The analogous result for $\BB{F}=\BB{R}$, also proved by Laurent \cite[Theorem 9]{L01}, is even more complicated. In the variation on (ii), besides the cycles $C_n,$ $n\geq 5$, another 15 graphs (\cite[Figure 2]{L01}) are excluded as induced subgraphs, while the analogue on (iii) states that $G$ is the clique-sum of graphs from 4 different classes (\cite[Figure 1]{L01}), namely the classes $\cal{G}_1$, $\cal{G}_2$, $\cal{G}_3$ and $\cal{G}_4$ depicted in Figure \ref{F:G1toG4} below (note that $\cal{G}_4=\cal{K}_2$).

	\begin{figure}[ht]
		\centering
		\begin{tikzpicture}[scale=1.25,line cap=round,line join=round,>=triangle 45,x=1.0cm,y=1.0cm]
		\clip(-5.,0.15) rectangle (4.5,5.565);
		\draw [line width=2.8pt] (1.,2.)-- (1.5,1.5);
		\draw [line width=2.8pt] (1.5,1.5)-- (1.,1.);
		\draw [line width=2.8pt] (1.,2.)-- (1.,1.);
		\draw [line width=2.8pt] (1.,1.)-- (2.,1.);
		\draw [line width=2.8pt] (2.,1.)-- (1.5,1.5);
		\draw [line width=2.8pt] (2.,1.)-- (2.,2.);
		\draw [line width=2.8pt] (1.5,1.5)-- (2.,2.);
		\draw [line width=2.8pt] (2.,2.)-- (1.,2.);
		\draw [line width=2.8pt] (3.,2.)-- (3.,1.);
		\draw [line width=2.8pt] (4.,2.)-- (4.,1.);
		\draw (-0.5,2.)-- (-0.5,1.5);
		\draw (-1.,2.)-- (-1.,1.5);
		\draw (-1.5,2.)-- (-1.5,1.5);
		\draw (-2.,2.)-- (-2.,1.5);
		\draw (-4.5,2.)-- (-4.,2.);
		\draw (-4.,2.)-- (-3.5,2.);
		\draw (-3.5,2.)-- (-3.,2.);
		\draw (-3.,1.5)-- (-3.5,1.5);
		\draw (-3.5,1.5)-- (-4.,1.5);
		\draw (-4.5,1.5)-- (-4.,1.5);
		\draw (-4.5,2.)-- (-4.,1.5);
		\draw (-4.,1.5)-- (-3.5,2.);
		\draw (-3.5,2.)-- (-3.,1.5);
		\draw (-3.,1.5)-- (-4.5,2.);
		\draw (-4.5,2.)-- (-3.5,1.5);
		\draw (-3.5,1.5)-- (-3.,2.);
		\draw (-3.,2.)-- (-4.5,1.5);
		\draw (-4.5,1.5)-- (-4.,2.);
		\draw (-4.,2.)-- (-3.5,1.5);
		\draw (-3.,2.)-- (-4.,1.5);
		\draw (-3.5,2.)-- (-4.5,1.5);
		\draw (-4.,2.)-- (-3.,1.5);
		\draw [shift={(-4.,1.241666666666666)}] plot[domain=0.9878827378391628:2.1537099157506305,variable=\t]({1.*0.9083333333333339*cos(\t r)+0.*0.9083333333333339*sin(\t r)},{0.*0.9083333333333339*cos(\t r)+1.*0.9083333333333339*sin(\t r)});
		\draw [shift={(-3.5,1.241666666666666)}] plot[domain=0.9878827378391628:2.1537099157506305,variable=\t]({1.*0.9083333333333339*cos(\t r)+0.*0.9083333333333339*sin(\t r)},{0.*0.9083333333333339*cos(\t r)+1.*0.9083333333333339*sin(\t r)});
		\draw [shift={(-3.75,1.)}] plot[domain=0.9272952180016122:2.214297435588181,variable=\t]({1.*1.25*cos(\t r)+0.*1.25*sin(\t r)},{0.*1.25*cos(\t r)+1.*1.25*sin(\t r)});
		\draw [shift={(-4.,2.2583333333333346)}] plot[domain=4.129475391428956:5.295302569340423,variable=\t]({1.*0.9083333333333343*cos(\t r)+0.*0.9083333333333343*sin(\t r)},{0.*0.9083333333333343*cos(\t r)+1.*0.9083333333333343*sin(\t r)});
		\draw [shift={(-3.5,2.2583333333333346)}] plot[domain=4.129475391428956:5.295302569340423,variable=\t]({1.*0.9083333333333343*cos(\t r)+0.*0.9083333333333343*sin(\t r)},{0.*0.9083333333333343*cos(\t r)+1.*0.9083333333333343*sin(\t r)});
		\draw [shift={(-3.75,2.5)}] plot[domain=4.068887871591405:5.355890089177974,variable=\t]({1.*1.25*cos(\t r)+0.*1.25*sin(\t r)},{0.*1.25*cos(\t r)+1.*1.25*sin(\t r)});
		\draw [rotate around={0.:(-3.75,1.75)},line width=1.2pt] (-3.75,1.75) ellipse (1.cm and 0.6614378277661477cm);
		\draw [line width=2.8pt] (-3.75,0.75)-- (-3.75,1.0885621722338523);
		\draw [rotate around={0.:(-2.5,4.5)},line width=1.2pt] (-2.5,4.5) ellipse (0.85cm and 0.4cm);
		\draw [line width=2.8pt] (-2.5,5.5)-- (-2.5,4.9);
		\draw [line width=2.8pt] (-2.5,3.5)-- (-2.5,4.1);
		\draw [line width=2.8pt] (1.5,4.)-- (1.,3.5);
		\draw [line width=2.8pt] (1.5,4.)-- (2.,3.5);
		\draw [line width=2.8pt] (3.,3.5)-- (4.,3.5);
		\draw [rotate around={0.:(1.5,3.65)},line width=1.2pt] (1.5,3.65) ellipse (0.75cm and 0.5590169943749502cm);
		\draw (1.25,5.25)-- (1.25,4.75);
		\draw (1.25,4.75)-- (1.75,4.75);
		\draw (1.75,4.75)-- (1.75,5.25);
		\draw (1.75,5.25)-- (1.25,5.25);
		\draw (3.25,5.25)-- (3.25,4.75);
		\draw (3.75,5.25)-- (3.75,4.75);
		\draw [line width=1.2pt] (1.5,5.) circle (0.45cm);
		\draw [line width=2.8pt] (1.5,4.55)-- (1.5,4.2090169943749505);
		\draw (-2.65,3.) node[anchor=north west] {$\cal{G}_1$};
		\draw (1.3,3.) node[anchor=north west] {$\cal{G}_2$};
		\draw (3.3,3.05) node[anchor=north west] {$\ol{\cal{G}}_2$};
		\draw (-3.375,4.725) node[anchor=north west] {\small chordal graph};
		\draw (-3.95,0.5) node[anchor=north west] {$\cal{G}_3$};
		\draw (-1.45,0.55) node[anchor=north west] {$\ol{\cal{G}}_3$};
		\draw (1.3,0.5) node[anchor=north west] {$\cal{G}_4$};
		\draw (3.3,0.55) node[anchor=north west] {$\ol{\cal{G}}_4$};
		\begin{scriptsize}
		\draw [fill=black] (1.,2.) circle (4.5pt);
		\draw [fill=black] (1.,1.) circle (4.5pt);
		\draw [fill=black] (2.,2.) circle (4.5pt);
		\draw [fill=black] (2.,1.) circle (4.5pt);
		\draw [fill=black] (1.5,1.5) circle (4.5pt);
		\draw [fill=black] (5.5,1.75) circle (1.5pt);
		\draw [fill=ffffff] (3.,2.) circle (4.5pt);
		\draw [fill=ffffff] (4.,2.) circle (4.5pt);
		\draw [fill=ffffff] (3.,1.) circle (4.5pt);
		\draw [fill=ffffff] (4.,1.) circle (4.5pt);
		\draw [fill=ffffff] (3.5,1.5) circle (4.5pt);
		\draw [fill=black] (-0.5,2.) circle (1.5pt);
		\draw [fill=black] (-0.5,1.5) circle (1.5pt);
		\draw [fill=black] (-1.,2.) circle (1.5pt);
		\draw [fill=black] (-1.,1.5) circle (1.5pt);
		\draw [fill=black] (-1.5,2.) circle (1.5pt);
		\draw [fill=ffffff] (-1.25,0.725) circle (4.5pt);
		\draw [fill=black] (-2.,2.) circle (1.5pt);
		\draw [fill=black] (-2.,1.5) circle (1.5pt);
		\draw [fill=black] (-3.,2.) circle (1.5pt);
		\draw [fill=black] (-3.,1.5) circle (1.5pt);
		\draw [fill=black] (-3.5,2.) circle (1.5pt);
		\draw [fill=black] (-3.5,1.5) circle (1.5pt);
		\draw [fill=black] (-4.,2.) circle (1.5pt);
		\draw [fill=black] (-4.,1.5) circle (1.5pt);
		\draw [fill=black] (-4.5,1.5) circle (1.5pt);
		\draw [fill=black] (-4.5,2.) circle (1.5pt);
		\draw [fill=black] (-3.75,0.75) circle (4.5pt);
		\draw [fill=black] (-2.5,3.5) circle (2.0pt);
		\draw [fill=black] (-2.5,5.5) circle (2.0pt);
		\draw [fill=black] (1.,3.5) circle (4.5pt);
		\draw [fill=black] (1.5,4.) circle (4.5pt);
		\draw [fill=black] (2.,3.5) circle (4.5pt);
		\draw [fill=ffffff] (3.,3.5) circle (4.5pt);
		\draw [fill=ffffff] (3.5,4.) circle (4.5pt);
		\draw [fill=ffffff] (4.,3.5) circle (4.5pt);
		\draw [fill=black] (1.25,5.25) circle (1.5pt);
		\draw [fill=black] (1.75,5.25) circle (1.5pt);
		\draw [fill=black] (1.25,4.75) circle (1.5pt);
		\draw [fill=black] (1.75,4.75) circle (1.5pt);
		\draw [fill=black] (3.25,5.25) circle (1.5pt);
		\draw [fill=black] (3.25,4.75) circle (1.5pt);
		\draw [fill=black] (3.75,5.25) circle (1.5pt);
		\draw [fill=black] (3.75,4.75) circle (1.5pt);
		\draw [fill=black] (-1.5,1.500339624539241) circle (1.5pt);
		\end{scriptsize}
		\end{tikzpicture}\caption{The graph classes $\cal{G}_1-\cal{G}_4$ and their complementary classes}\label{F:G1toG4}
	\end{figure}
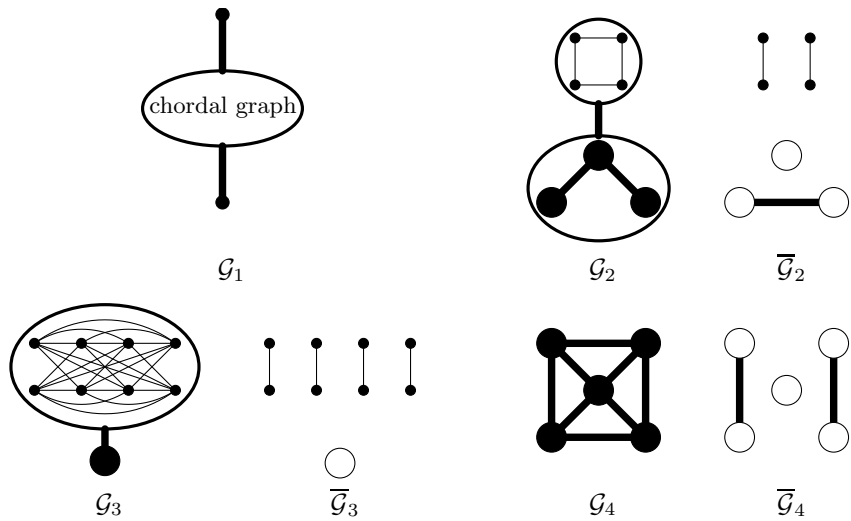
	
In \cite{L01}, Theorem \ref{T:SO2complex} is derived from the result for the case $\BB{F}=\BB{R}$, which in turn relies on a long series of technical lemmas. In the present paper we prove Theorem \ref{T:SO2complex} directly, without going through the case $\BB{F}=\BB{R}$. An important feature of our approach is the observation that the classes $\cal{K}_2$ and $\cal{K}_3$ can be seen as special cases of the larger class of graphs $\cal{F}=\cup_{k=0}^\infty \cal{F}_k$, where $\cal{F}_k$ is the class of graphs depicted in \ref{F:F_k}.

\begin{figure}[ht]
	\centering
	\begin{tikzpicture}[scale=1.55,line cap=round,line join=round,>=triangle 45,x=1.0cm,y=1.0cm]
	\clip(0.,-0.05) rectangle (8.,3.5);
	\draw [fill=black,fill opacity=1.0] (1.,2.5) circle (0.2cm);
	\draw [fill=black,fill opacity=1.0] (1.5,2.5) circle (0.2cm);
	\draw [fill=black,fill opacity=1.0] (3.,2.5) circle (0.2cm);
	\draw [fill=black,fill opacity=1.0] (3.5,2.5) circle (0.2cm);
	\draw [fill=black,fill opacity=1.0] (1.,1.5) circle (0.2cm);
	\draw [fill=black,fill opacity=1.0] (1.5,1.5) circle (0.2cm);
	\draw [fill=black,fill opacity=1.0] (3.,1.5) circle (0.2cm);
	\draw [fill=black,fill opacity=1.0] (3.5,1.5) circle (0.2cm);
	\draw [fill=black,fill opacity=1.0] (2.25,0.5) circle (0.2cm);
	\draw [line width=2.8pt] (1.,2.5)-- (1.5,1.5);
	\draw [line width=2.8pt] (1.5,2.5)-- (1.,1.5);
	\draw [line width=2.8pt] (1.,1.5)-- (1.5,1.5);
	\draw [line width=2.8pt] (1.,2.5)-- (1.5,2.5);
	\draw [line width=2.8pt] (3.,2.5)-- (3.5,1.5);
	\draw [line width=2.8pt] (3.,1.5)-- (3.5,2.5);
	\draw [line width=2.8pt] (3.5,2.5)-- (3.,2.5);
	\draw [line width=2.8pt] (3.,1.5)-- (3.5,1.5);
	\draw [shift={(2.25,-0.2625)},line width=2.8pt]  plot[domain=1.3056932622015485:1.8358993913882449,variable=\t]({1.*2.8625*cos(\t r)+0.*2.8625*sin(\t r)},{0.*2.8625*cos(\t r)+1.*2.8625*sin(\t r)});
	\draw [shift={(2.25,4.2625)},line width=2.8pt]  plot[domain=4.447285915791341:4.977492044978038,variable=\t]({1.*2.8625*cos(\t r)+0.*2.8625*sin(\t r)},{0.*2.8625*cos(\t r)+1.*2.8625*sin(\t r)});
	\draw [shift={(2.5,3.25)},line width=2.8pt]  plot[domain=4.193242866138167:5.231535094631212,variable=\t]({1.*2.0155644370746373*cos(\t r)+0.*2.0155644370746373*sin(\t r)},{0.*2.0155644370746373*cos(\t r)+1.*2.0155644370746373*sin(\t r)});
	\draw [shift={(2.5,0.75)},line width=2.8pt]  plot[domain=1.0516502125483738:2.0899424410414196,variable=\t]({1.*2.0155644370746373*cos(\t r)+0.*2.0155644370746373*sin(\t r)},{0.*2.0155644370746373*cos(\t r)+1.*2.0155644370746373*sin(\t r)});
	\draw [shift={(2.,0.75)},line width=2.8pt]  plot[domain=1.0516502125483738:2.0899424410414196,variable=\t]({1.*2.0155644370746373*cos(\t r)+0.*2.0155644370746373*sin(\t r)},{0.*2.0155644370746373*cos(\t r)+1.*2.0155644370746373*sin(\t r)});
	\draw [shift={(2.,3.25)},line width=2.8pt]  plot[domain=4.193242866138167:5.231535094631212,variable=\t]({1.*2.0155644370746373*cos(\t r)+0.*2.0155644370746373*sin(\t r)},{0.*2.0155644370746373*cos(\t r)+1.*2.0155644370746373*sin(\t r)});
	\draw [shift={(2.25,0.7468750000000003)},line width=2.8pt]  plot[domain=0.9513904377099842:2.190202215879809,variable=\t]({1.*2.1531249999999997*cos(\t r)+0.*2.1531249999999997*sin(\t r)},{0.*2.1531249999999997*cos(\t r)+1.*2.1531249999999997*sin(\t r)});
	\draw [shift={(2.25,3.2531250000000003)},line width=2.4pt]  plot[domain=4.092983091299777:5.3317948694696025,variable=\t]({1.*2.153125*cos(\t r)+0.*2.153125*sin(\t r)},{0.*2.153125*cos(\t r)+1.*2.153125*sin(\t r)});
	\draw [rotate around={0.:(2.25,2.)},line width=1.2pt] (2.25,2.) ellipse (2.018333527101401cm and 1.005569603071603cm);
	\draw [line width=2.8pt] (2.25,0.5)-- (2.244802330860033,0.9944337313019598);
	\draw [line width=2.8pt] (1.5,2.5)-- (3.,1.5);
	\draw [line width=2.8pt] (1.5,1.5)-- (3.,2.5);
	\draw [line width=2.8pt] (1.,1.5)-- (3.,2.5);
	\draw [line width=2.8pt] (1.,1.5)-- (3.5,2.5);
	\draw [line width=2.8pt] (3.,1.5)-- (1.,2.5);
	\draw [line width=2.8pt] (3.5,1.5)-- (1.5,2.5);
	\draw [line width=2.8pt] (3.5,1.5)-- (1.,2.5);
	\draw [line width=2.8pt] (1.5,1.5)-- (3.5,2.5);	
	\draw [color=ffffff](0.75,2.625) node[anchor=north west] {\fontsize{7}{1} $X_{\!1}$};
	\draw [color=ffffff](1.25,2.625) node[anchor=north west] {\fontsize{7}{1} $X_{\!2}$};
	\draw [color=ffffff](2.7,2.625) node[anchor=north west] {\fontsize{7}{1} $X_{\!k\text{-}1}$};
	\draw [color=ffffff](3.25,2.625) node[anchor=north west] {\fontsize{7}{1} $X_{\!k}$};	
	\draw [color=ffffff](0.775,1.625) node[anchor=north west] {\fontsize{7}{1} $Y_{\!1}$};
	\draw [color=ffffff](1.275,1.625) node[anchor=north west] {\fontsize{7}{1} $Y_{\!2}$};
	\draw [color=ffffff](2.725,1.625) node[anchor=north west] {\fontsize{7}{1} $Y_{\!k\text{-}1}$};
	\draw [color=ffffff](3.275,1.625) node[anchor=north west] {\fontsize{7}{1} $Y_{\!k}$};
	\draw [color=ffffff](2.035,0.625) node[anchor=north west] {\fontsize{7}{1} $Z$};
	\draw (2.035,0.25) node[anchor=north west] {${\cal{F}}_k$};	
	\draw [color=black](4.75,2.625) node[anchor=north west] {\fontsize{7}{1} ${X}_{\!1}$};
	\draw [color=black](5.25,2.625) node[anchor=north west] {\fontsize{7}{1} ${X}_{\!2}$};
	\draw [color=black](6.7,2.625) node[anchor=north west] {\fontsize{7}{1} ${X}_{\!k\text{-}1}$};
	\draw [color=black](7.25,2.625) node[anchor=north west] {\fontsize{7}{1} ${X}_{\!k}$};	
	\draw [color=black](4.775,1.625) node[anchor=north west] {\fontsize{7}{1} ${Y}_{\!1}$};
	\draw [color=black](5.275,1.625) node[anchor=north west] {\fontsize{7}{1} ${Y}_{\!2}$};
	\draw [color=black](6.7,1.625) node[anchor=north west] {\fontsize{7}{1} ${Y}_{\!k\text{-}1}$};
	\draw [color=black](7.275,1.625) node[anchor=north west] {\fontsize{7}{1} ${Y}_{\!k}$};
	\draw [color=black](6.035,0.625) node[anchor=north west] {\fontsize{7}{1} ${Z}$};
	\draw (6.035,0.285) node[anchor=north west] {$\ol{\cal{F}}_k$};
	\draw(5.,2.5) circle (0.2cm);
	\draw(5.5,2.5) circle (0.2cm);
	\draw(5.,1.5) circle (0.2cm);
	\draw(5.5,1.5) circle (0.2cm);
	\draw(7.,2.5) circle (0.2cm);
	\draw(7.5,2.5) circle (0.2cm);
	\draw(7.,1.5) circle (0.2cm);
	\draw(7.5,1.5) circle (0.2cm);
	\draw(6.25,0.5) circle (0.2cm);
	\draw [line width=2.8pt] (4.9999776835294245,2.300000001245062)-- (5.,1.7);
	\draw [line width=2.8pt] (5.5,2.3)-- (5.5,1.7);
	\draw [line width=2.8pt] (7.,1.7)-- (7.,2.3);
	\draw [line width=2.8pt] (7.5,1.7)-- (7.5,2.3);	
	\begin{scriptsize}
	\draw [fill=black] (6.,2.) circle (0.5pt);
	\draw [fill=black] (6.5,2.) circle (0.5pt);
	\draw [fill=black] (6.25,2.) circle (0.5pt);
	\draw [fill=black] (2.,2.) circle (0.5pt);
	\draw [fill=black] (2.5,2.) circle (0.5pt);
	\draw [fill=black] (2.25,2.) circle (0.5pt);
	\draw [fill=black] (2.,2.5) circle (0.5pt);
	\draw [fill=black] (2.5,2.5) circle (0.5pt);
	\draw [fill=black] (2.,1.5) circle (0.5pt);
	\draw [fill=black] (2.5,1.5) circle (0.5pt);
	\draw [fill=black] (2.25,2.5) circle (0.5pt);
	\draw [fill=black] (2.25,1.5) circle (0.5pt);
	\draw [fill=black] (6.,2.5) circle (0.5pt);
	\draw [fill=black] (6.5,2.5) circle (0.5pt);
	\draw [fill=black] (6.,1.5) circle (0.5pt);
	\draw [fill=black] (6.5,1.5) circle (0.5pt);
	\draw [fill=black] (6.25,1.5) circle (0.5pt);
	\draw [fill=black] (6.25,2.5) circle (0.5pt);
	\end{scriptsize}
	\end{tikzpicture}\caption{The graph classes $\cal{F}_k$ and $\ol{\cal{F}}_k$}\label{F:F_k}
\end{figure}

To guarantee that $\cal{F}_k$ and $\cal{F}_l$ are disjoint whenever $k\neq l$, we demand that the cliques $X_j\neq\emptyset$ and $Y_j\neq\emptyset,$ $j=1,\ldots, k$, but $Z=\emptyset$ is allowed.

Note that we have $\cal{K}_2=\cal{F}_2$ and $\cal{K}_3\subset \cal{F}_3$. Furthermore, the classes $\cal{G}_2$ and $\cal{G}_3$ that appear in the characterization of the real case are also subclasses of $\cal{F}$, namely, $\cal{G}_2\subset \cal{F}_3$ and $\cal{G}_3\subset \cal{F}_4$. Thus, only the class $\cal{G}_1$ differs in the sense that it is not a subclass of $\cal{F}$.\medskip

\paragraph{\bf Graph structure theorems}
Elements of $\ol{\cal{F}}$ are disjoint unions of complete bipartite graphs $K_{l,m}$, where the single stable set at the bottom should be viewed as $K_{l,0}$. Hence graphs in $\cal{F}$ can be described as the complements of disjoint unions of complete bipartite graphs. We now provide a different characterization of the class $\cal{F}$ in terms of forbidden induced subgraphs.

\begin{theorem}\label{T:graphstruc1}
Let $G$ be a simple undirected graph. Then $G\in\cal{F}$ if and only if $G$ does not contain  $P_4$ or $\ol{K}_3$ as induced subgraphs, i.e., $G$ contains no induced path of length 3 nor a stable set of cardinality 3.
\end{theorem}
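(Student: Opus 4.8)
The plan is to reduce the theorem to an elementary structural fact by passing to complements. As recorded in the paragraph preceding the theorem, $\cal{F}$ is precisely the class of complements of disjoint unions of complete bipartite graphs $K_{l,m}$ (with an isolated vertex regarded as $K_{1,0}$). Since complementation sends induced subgraphs to induced subgraphs, since $P_4$ is isomorphic to its own complement, and since $\ol{K}_3$ is the complement of the triangle $K_3$, the theorem is equivalent --- on writing $H=\ol{G}$ --- to the statement that $H$ is a disjoint union of complete bipartite graphs if and only if $H$ has no induced $P_4$ and no triangle. I would prove this equivalence directly for an arbitrary simple graph $H$; alternatively one could deduce it from the fact that $P_4$-free graphs are cographs together with their recursive union/join decomposition, but the direct argument is short and self-contained.

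The forward implication is routine: a complete bipartite graph is bipartite, hence triangle-free, and has no induced $P_4$, because in a path $a-b-c-d$ the non-adjacencies $a\not\sim c$ and $b\not\sim d$ would place $a,c$ in one part and $b,d$ in the other, forcing $a\sim d$; and since $P_4$ and $K_3$ are connected, an induced copy of either inside a disjoint union lies in a single component. For the converse I would first reduce to $H$ connected --- both forbidden graphs are connected, and a disjoint union of complete bipartite graphs is exactly a graph each of whose components is complete bipartite --- and dispose of $|V(H)|\le 1$ trivially. Fixing a vertex $v$, I would then set $A:=N(v)$, which is nonempty as $H$ is connected with at least two vertices, and $B:=V(H)\setminus A\ni v$, and aim to show that $H=K_{|A|,|B|}$ with bipartition $(A,B)$. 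Triangle-freeness immediately gives that $A$ is independent, and --- once it is known that every vertex of $B$ is adjacent to all of $A$ --- that $B$ is independent as well, since an edge inside $B$ cannot meet $v$ and would complete a triangle with any vertex of $A$.

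Thus the crux is to show that every $b\in B$ is adjacent to every vertex of $A$, which I would establish in two steps. First, each $b\in B\setminus\{v\}$ has \emph{some} neighbour in $A$: if not, then since also $b\not\sim v$ a shortest path from $v$ to $b$ would have length at least $3$, and its first four vertices would induce a $P_4$ (shortest paths are induced). Second, no $b\in B$ can have both a neighbour $a_1\in A$ and a non-neighbour $a_2\in A$, since then $b-a_1-v-a_2$ would be an induced $P_4$, using $b\not\sim v$, $b\not\sim a_2$, and $a_1\not\sim a_2$ (independence of $A$). Combining the two steps, every $b\in B$ is adjacent to all of $A$, so $H=K_{|A|,|B|}$; undoing the complementation then gives the theorem. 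I expect the one point needing a little care to be the shortest-path step --- essentially the observation that a connected $P_4$-free graph has diameter at most $2$ --- since this is exactly what allows the bipartition to be reconstructed from the single vertex $v$; the remaining verifications are routine.
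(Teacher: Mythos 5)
Your proof is correct, and it takes a genuinely different route from the paper's. The paper argues directly in $G$: it fixes a non-adjacent pair $x_1,y_1$, defines $X_1$ and $Y_1$ as the corresponding non-neighbourhoods, checks by hand that these are disjoint cliques with no edges between them and with full adjacency to the remaining vertices, and then iterates this peeling procedure until only a clique is left, producing the sets $X_j,Y_j,Z$ of Figure \ref{F:F_k} one pair at a time. You instead pass to the complement, where membership in $\cal{F}$ becomes the statement that every connected component of $\ol{G}$ is complete bipartite; this turns the paper's ``fully joined to the rest'' condition into ``disconnected from the rest'', so the iteration disappears and it suffices to show that a single connected $\{P_4,K_3\}$-free graph is complete bipartite. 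Your three observations --- $N(v)$ is independent by triangle-freeness, every vertex lies within distance $2$ of $v$ because shortest paths are induced, and adjacency to $A=N(v)$ is all-or-nothing by $P_4$-freeness --- are essentially the duals of the paper's claims about $X_1$ and $Y_1$, but organized around a single vertex rather than a non-edge, and they are all verified correctly (including the edge cases $b=v$ and an edge of $B$ meeting $v$). What the paper's formulation buys is that the decomposition into $X_j,Y_j,Z$ is constructed explicitly inside $G$, which is the form used repeatedly afterwards (from Proposition \ref{P:Struc1App} onward); what yours buys is brevity and the identification of the theorem with the standard fact about connected triangle-free cographs. The only point worth spelling out when writing it up is the bookkeeping at the end: the isolated vertices of $\ol{G}$ (the degenerate components $K_{l,0}$) collectively form the unpaired clique $Z$, while the components containing an edge give the pairs $(X_j,Y_j)$ with both parts non-empty, so the correspondence with the paper's normalization of $\cal{F}_k$ is exact.
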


Note that $P_4$ and $\ol{K}_3$ are elements of the class $\cal{G}_1$. Hence, as claimed above, $\cal{G}_1$ is indeed not a subclass of $\cal{F}$. Furthermore, the fact that $P_4$ is excluded as induced subgraph implies that the cycles $C_n,$ $n\geq 5,$ are also excluded.

There are many relevant classes of graphs that are characterized by excluding a set $\cal{L}$ of graphs as induced subgraphs, the so-called $\cL$-free graphs. See Chapter 7 in \cite{BLS99} for an overview of several of such classes and further references. Of particular interest to our class $\cal{F}$ is the class of cographs, graphs whose modular decomposition tree contains only parallel and series nodes; \cite[Section 11.3]{BLS99}. One characterization of cographs is that they are $P_4$-free. Hence, all graphs in $\cal{F}$ are cographs, and consequently, are non-prime, have the clique-kernel intersection property and every connected subgraph has diameter at most 2. More results can be found in Sections 1.5 and 11.3 of \cite{BLS99} and the references given there. Many of these results can easily be understood from the graphical representation in Figure \ref{F:F_k}.

A type of graph that is included in $\cal{F}$ is the cocktail party graph, cf., \cite{GMW86}.  Each class $\cal{F}_k$ contains exactly one cocktail party graph, the graph in $\cal{F}_k$ with the minimal number of vertices, namely one where the `single' clique is empty and all the `paired' cliques are singletons (forming a pair at the cocktail party). If one does allow the `single' clique (which may be interpreted as the collection of singles attending the cocktail party), these graphs can be characterized as the simple undirected graph that contain no induced $P_4$, $\ol{K}_3$ or $\ol{P}_3$; see the last part of the proof of Theorem \ref{T:SO2complex}, (ii) $\Leftrightarrow$ (iii) in Section \ref{S:grapgstruc} to see how excluding $\ol{P}_3$ compresses the `paired' cliques to singletons. See \cite{CLMS15,BM13,BM09} for some recent developments.

For the purposes of the present paper, however, we are interested in clique-sums of elements of $\cal{F}$. Note that $P_4$ and $\ol{K}_3$ can appear as induced subgraphs of such clique-sums, since $P_4$ and $\ol{K}_3$ themselves have clique cut-sets. This is why the characterization of the graphs that are clique-sums of elements of $\cal{F}$ involves different graphs, namely the graphs $C_n,$ $n\geq 5,$ and $D_1-D_4$.

\begin{theorem}\label{T:graphstruc2}
Let $G$ be a simple undirected graph. Then $G$ is a clique-sum of graphs from $\cal{F}$ if and only if $G$ does not have any of the graphs $C_n,$ $n\geq 5,$ and $D_1-D_4$ from Theorem \ref{T:SO2complex} as an induced subgraph.
\end{theorem}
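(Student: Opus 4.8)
The plan is to prove both implications using the characterization of $\cal{F}$ from Theorem \ref{T:graphstruc1}, namely that the members of $\cal{F}$ are exactly the $\{P_4,\ol{K}_3\}$-free graphs, together with the standard theory of clique-cutsets and atoms. For the easy direction, suppose $G$ is a clique-sum of graphs from $\cal{F}$, and suppose toward a contradiction that $G$ contains one of $C_n$ ($n\ge 5$), $D_1,\dots,D_4$ as an induced subgraph $H$. Each of these forbidden graphs is connected and, one checks by inspection, has no clique-cutset (this is the key combinatorial fact to verify for the five/six cases — the cycles $C_n$ for $n\ge 5$ obviously have no clique-cutset since removing any clique, i.e.\ a vertex or an edge, leaves a connected path; the graphs $D_1$–$D_4$ are small enough to check directly from Figure \ref{F:D1toD6} by taking complements). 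A connected induced subgraph with no clique-cutset must lie entirely inside one of the atoms of the clique-sum decomposition of $G$, hence inside a single member of $\cal{F}$; but then, by Theorem \ref{T:graphstruc1}, $H$ is $\{P_4,\ol{K}_3\}$-free, and we obtain a contradiction by exhibiting an induced $P_4$ or an induced $\ol{K}_3$ in each of $C_n$ ($n\ge5$) and $D_1$–$D_4$. (For instance $C_n$, $n\ge5$, contains an induced $P_4$; one then only needs an induced $P_4$ or $\ol K_3$ inside each $D_i$, which is immediate.)

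For the harder direction, assume $G$ contains none of $C_n$ ($n\ge 5$), $D_1,\dots,D_4$ as an induced subgraph; I want to show $G$ is a clique-sum of members of $\cal{F}$. Decompose $G$ by clique-cutsets into its atoms $G_1,\dots,G_m$ — these are the maximal induced subgraphs with no clique-cutset, and $G$ is recoverable as a clique-sum of the $G_i$ (this is the classical clique-cutset decomposition, e.g.\ Tarjan / Whitesides; it is legitimate to cite it). Since the property of avoiding a fixed family of induced subgraphs is hereditary, each atom $G_i$ also contains none of $C_n$ ($n\ge5$), $D_1$–$D_4$. It therefore suffices to prove: \emph{an induced-$\{C_n\ (n\ge5),D_1,\dots,D_4\}$-free graph with no clique-cutset lies in $\cal{F}$}, i.e.\ is $\{P_4,\ol{K}_3\}$-free. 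Equivalently, I must show such a graph contains no induced $P_4$ and no induced $\ol{K}_3$.

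The core of the argument is thus the following: if a graph $A$ has no clique-cutset and contains an induced $P_4$ or an induced $\ol{K}_3$, then $A$ contains an induced $C_n$ with $n\ge5$ or one of $D_1$–$D_4$. The strategy is to "grow" the bad configuration using the absence of a clique-cutset. Suppose $A$ contains an induced $\ol{K}_3$ on vertices $\{a,b,c\}$. Since $A$ has no clique-cutset, for each pair among $\{a,b,c\}$ — say $\{a,b\}$, which need not even be an edge, so consider instead the clique "hull" arguments — one shows there is an induced path between the two vertices avoiding the third together with the common neighbourhood, and by taking such paths minimal and combining them one produces an induced cycle through $a,b,c$; if that cycle has length $\ge5$ we are done, and if it has length $4$ (there is no length-$3$ cycle through a stable set) we have a $C_4$ with an extra vertex, and a short case analysis on how the two "missing" diagonals can be chorded by further vertices (again using no-clique-cutset to find connecting paths) forces one of the small graphs $D_1$–$D_4$ or a longer induced cycle. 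The case of an induced $P_4$ on $a\!-\!b\!-\!c\!-\!d$ is handled similarly: absence of a clique-cutset applied to the non-edges $\{a,c\},\{b,d\},\{a,d\}$ supplies connecting induced paths which either close up $abcd$ into a long induced cycle or attach a small number of extra vertices, and enumerating the possibilities yields $C_5$, a longer cycle, or one of $D_1$–$D_4$. This last enumeration — turning "no clique-cutset plus a forbidden small seed" into "one of finitely many explicit obstructions" — is the main obstacle, and the place where the explicit shapes of $D_1$–$D_4$ (most transparently read off from their complements $\ol D_1$–$\ol D_4$ in Figure \ref{F:D1toD6}, since complements of short disjoint unions of bipartite-ish pieces are easy to recognize) must be matched against the cases; I expect it to require a careful but finite case split, organized by the size and adjacency pattern of the minimal "connector" the no-clique-cutset hypothesis provides.
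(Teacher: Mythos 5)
Your reduction is sound as far as it goes: the necessity direction (the forbidden graphs have no clique cut-sets, hence must sit inside a single summand, which would have to be $\{P_4,\ol{K}_3\}$-free by Theorem \ref{T:graphstruc1}, yet each of $C_n$, $D_1$--$D_4$ contains an induced $P_4$ or $\ol{K}_3$) is complete and is essentially the paper's argument, and the reduction of sufficiency to the statement ``an atom avoiding $C_n$ ($n\geq 5$) and $D_1$--$D_4$ is $\{P_4,\ol{K}_3\}$-free'' is exactly the right target (it is stated as a corollary in the paper). The problem is that this last statement is the entire content of the theorem, and your proposal does not prove it: the paragraph beginning ``The core of the argument is thus the following'' is a plan, not an argument. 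You yourself flag the enumeration as ``the main obstacle'' that you ``expect'' to require a finite case split; nothing in the proposal delivers it. The sketch also shows signs of not having been tested: an induced cycle passing through a stable set $\{a,b,c\}$ of size $3$ must have length at least $6$ (the independence number of $C_4$ and $C_5$ is $2$), so the dichotomy ``length $\geq 5$ or length $4$'' for a cycle through $a,b,c$ does not parse, and it is not clear what the ``clique hull'' or ``connecting induced path avoiding the common neighbourhood'' steps actually are --- absence of a clique cut-set does not by itself hand you induced paths avoiding prescribed vertex sets.

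It is also worth knowing that the paper does not argue locally by growing a bad $P_4$ or $\ol{K}_3$ into an obstruction. Instead it runs a global structural decomposition of an atom $G$: choose a maximal connected set $X_0$ whose non-neighbourhood $Y_0$ is non-empty, let $V_0$ be the neighbourhood of $X_0$ (Proposition \ref{P:part}); use the excluded cycles to show any two (or three) pairwise non-adjacent vertices of $V_0$ have a common neighbour in $X_0$ (Lemma \ref{L:commonvert}, Corollary \ref{C:commonvert}); combine this with the excluded $D_1$--$D_4$ to show $G[V_0]$ is $\{P_4,\ol{K}_3\}$-free, hence in $\cal{F}$ by Theorem \ref{T:graphstruc1} (Proposition \ref{P:Struc1App}); then show $Y_0$ is a clique, and that $X_0$ is a clique completely joined to $V_0$ (the last step needs a separate argument, Propositions \ref{P:Xtil0props} and \ref{P:zadjtilx0} plus a final clique-cut-set contradiction to show the exceptional set $T=X_0\setminus\wtil X_0$ is empty). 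This directly exhibits $G$ as a member of $\cal{F}_{k+1}$. If you want to salvage your local approach you would need to actually carry out the case analysis you describe, and I would expect it to be at least as long as the paper's; as written, the proposal leaves the hard half of the theorem unproved.
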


We prove Theorems \ref{T:graphstruc1} and \ref{T:graphstruc2} in Section \ref{S:grapgstruc}. Once these results are established, the equivalence of (ii) and (iii) in Theorem \ref{T:SO2complex} readily follows; it is exactly when the graphs $D_5$ and $D_6$ in Figure \ref{F:D1toD6} are excluded as induced subgraphs, that a graph from the graph class $\cF$ must belong to $\cal{K}_2\cup\cal{K}_3$.\medskip

\paragraph{\bf Complex sparsity order.}
The second part of the paper (Section \ref{S:sparord}) has more of a linear algebra flavour. Here we give a brief sketch of the implication (i) $\Rightarrow$ (ii) of Theorem \ref{T:SO2complex} and a proof of the implication (iii) $\Rightarrow$ (i) of Theorem \ref{T:SO2complex}. The latter proof is also valid for the case $\BF=\BR$. Our aim here is to approach the result more from a matrix analysis perspective and to avoid usage of the so-called Dimension Theorem (Theorem 3.1 and Corollary 3.1 in \cite{AHMR88}), also referred to as the Frame Theorem in \cite{McM88}. The Dimension Theorem provides an easy way to determine if a matrix in $\PSD_G$ is extremal, but usage of this result sometimes conceals the argumentation.

Part of our proof is based on a solution to a seemingly independent question in linear algebra. Let $\BF^p$ have two orthogonal sum decompositions
\[
\BF^p=\cX_1\oplus\cX_2 \ands \BF^p=\cY_1\oplus\cY_2.
\]
We seek a subspace $\cZ\subset \BF^p$ of dimension two that splits over both orthogonal decompositions, that is, $\cZ= (\cZ\cap\cX_1)\oplus (\cZ\cap\cX_2)=(\cZ\cap\cY_1)\oplus (\cZ\cap\cY_2)$. It turns out that such a subspace $\cZ$ can always be constructed, see Proposition \ref{P:dim2subs} below, however, the construction we present is not trivial.\medskip

\paragraph{\bf Notation and terminology}
We conclude this introduction with a brief discussion of some of the notation and terminology used throughout the paper.

Throughout, we use $C_n$ and $P_n$ to denote the (chordless) cycle and (chordless) path with $n$ vertices. The complete graph with $n$ vertices is denoted by $K_n$ and we write $K_{l,m}$ for the complete bipartite graph of $l+m$ vertices with a bipartite partitioning of the vertices in subsets of $l$ and $m$ vertices.

Let $G=(V,E)$ be a simple undirected graph. Then $\ol{G}$ denotes the complement of $G$. For a subset $S\subset V$, we write $G[S]$ for the subgraph of $G$  induced by $S$, that is $G[S]=(S,E_S)$ with $E_S=E\cap (S\times S)$. We say that a graph $G'$ is an induced subgraph of $G$ whenever there exists a $S\subset V$ such that $G'$ is isomorphic to $G[S]$. A subset $S$ of $V$ is called a clique if $G[S]$ is complete and a stable set if $\ol{G}[S]$ is complete. If $G$ is connected, then a clique $S$ in $G$ is called a clique cut-set in $G$ if $G[E\backslash S]$ is no longer connected. If two graphs $G_i=(V_i,E_i)$, $i=1,2$, both have a clique of the same cardinality, then the clique-sum of $G_1$ and $G_2$ is the graph $G=(V_1\cup V_2, E_1\cup E_2)$ where the clique vertices in $V_1$ and $V_2$ are identified. Clique-sums of more than two graphs are defined inductively.

\section{Graph structure theorems}\label{S:grapgstruc}

In this section we prove the two structure theorems stated in the introduction, i.e., Theorems \ref{T:graphstruc1} and \ref{T:graphstruc2}. We also prove the equivalence of statements (ii) and (iii) in Theorem \ref{T:SO2complex}. We start with a proof of Theorem \ref{T:graphstruc1}.

\begin{proof}[\bf Proof of Theorem \ref{T:graphstruc1}.]
	Clearly, if $G\in\cal{F}_k$ it can not contain $P_4$ or $\ol{K}_3$ as an induced subgraph.

	Conversely, assume $G$ does not contain $P_4$ or $\ol{K}_3$ as an induced subgraph. If $G$ is a complete graph, then $G\in \cal{F}_0$ and we are done. Thus assume $G$ is not a complete graph. Then there exist $x_1,y_1\in V$ which are non-adjacent. We now define the sets
	\[
	X_1=\lbrace{v\in V: v\neq y_1, (v,y_1)\notin E}\rbrace
	\quad\text{ and }\quad
	Y_1=\lbrace{u\in V:u\neq x_1, (u,x_1)\notin E}\rbrace.
	\]
	Then $x_1\in X_1$ and $y_1\in Y_1$. We claim that $X_1$ and $Y_1$ are disjoint cliques, with no edges between them and that every vertex in $X_1\cup Y_1$ is adjacent to every vertex in $V\setminus[X_1\cup Y_1]$.

	Indeed, if $X_1$ is not a clique, say $x,x'\in X_1$ are non-adjacent, then $\lbrace{x,x',y_1}\rbrace$ is a stable set of cardinality 3, a contradiction. Thus $X_1$ is a clique. Similarly we obtain that $Y_1$ is a clique. If $X_1\cap Y_1\neq\emptyset$, say $v\in X_1\cap Y_1$, then $\lbrace{v,x_1,y_1}\rbrace$ is a stable set of cardinality 3, again a contradiction. Thus $X_1\cap Y_1=\emptyset$. Next, assume that there exists some vertex $x\in X_1$ adjacent to a vertex $y\in Y_1.$ Note that $x\neq x_1$ and $y\neq y_1,$ per the definition of $X_1$ and $Y_1.$ Furthermore, $x$ is adjacent to $x_1$ and $y$ is adjacent to $y_1,$ since $X_1$ and $Y_1$ are cliques. Then $\lbrace{x_1,x,y,y_1}\rbrace$ induces a $P_4,$ a contradiction. Thus, there are no edges between $X_1$ and $Y_1.$

	If $V\setminus [X_1\cup Y_1]$ is empty, we are done. If not let $z\in V\setminus[X_1\cup Y_1],$ then $z$ is adjacent to $x_1$ and $y_1$, else $z\in X_1\cup Y_1$. Suppose now that $u\in X_1\cup Y_1$ is non-adjacent to $z$. Then $u\neq x_1,y_1$ and $u$ is adjacent to exactly one of $x_1$ and $y_1$, since $X_1$ and $Y_1$ are disjoint cliques. Thus $\lbrace{u,x_1,y_1,z}\rbrace$ induces a $P_4$ in $G$, a contradiction. This proves our claim.

	Set $V_1=V\setminus[X_1\cup Y_1]$. If $V_1$ is a clique in $G$, then $G\in\cal{F}_1$ with $Z=V_1$ and we are done. Else, set $G_1=G[V_1]$. Then $G_1$ does not contain $P_4$ or $\ol{K}_3$ as an induced subgraph. We now repeat the process described above with $G_1$, until we are left with a (possibly empty) clique; in case the process is repeated $k$ times before the clique appears, we find that $G\in \cal{F}_k$.
\end{proof}

The proof of Theorem \ref{T:graphstruc2} is more involved. We start with an important observation. Different from $P_4$ and $\ol{K}_3$, the graphs $D_1-D_4$ have no clique cut-set. Therefore, for $j=1,\ldots,4$, if $D_j$  appears as an induced subgraph of $G$, and $G$ has a clique cut-set, then $D_j$ must appear in one of the subgraphs from the clique cut-set decomposition of $G$. This has two implications:
\begin{itemize}
	\item The necessity claim of Theorem \ref{T:graphstruc2} follows directly from the easy observation that $D_1,\ldots,D_4\notin \cal{F}$.

	\item For the sufficiency claim of Theorem \ref{T:graphstruc2} we may assume without loss of generality that $G$ has no clique cut-set. In particular, we may assume $G$ is connected, since the null graph is a clique cut-set for the connected parts of a graph.

\end{itemize}

Hence it remains to prove the sufficiency claim of Theorem \ref{T:graphstruc2}. In the remainder of this section we shall assume $G$ has no clique cut-set. We shall also assume $G$ is not a complete graph, since $\cal{F}_0$ is precisely the set of complete graphs, and so, if $G$ is complete the theorem clearly holds.

\begin{proposition}\label{P:part}
	Let $G=(V,E)$ be a simple undirected graph. If $G$ is not a complete graph and has no clique cut-sets, then $V$ can be partitioned as $$V=X_0\cup V_0\cup Y_0,$$ with $X_0$, $V_0$ and $Y_0$ non-empty, $G[X_0]$ connected, no edges between vertices in $X_0$ and $Y_0$, the set $V_0$ is not a clique, every vertex in $Y_0$ is adjacent to every vertex in $V_0$ and every vertex in $V_0$ is adjacent to, at least, one vertex in $X_0$.
\end{proposition}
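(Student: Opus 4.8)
The plan is to build the partition greedily starting from a non-edge, in the spirit of the proof of Theorem~\ref{T:graphstruc1}, but being careful that here $G$ may contain induced $P_4$'s and stable sets of size~$3$, so the cliques produced there need not be cliques now. Concretely, since $G$ is not complete, pick non-adjacent $x,y\in V$, and set
\[
Y_0=\{v\in V: v\neq x,\ (v,x)\notin E\},\qquad V_0'=V\setminus(Y_0\cup\{x\}).
\]
Then $x\notin Y_0$, $y\in Y_0$, and by construction every vertex of $Y_0$ is non-adjacent to $x$ while every vertex of $V_0'$ is adjacent to $x$. Let $X_0$ be the connected component of $x$ in $G[V\setminus Y_0]$, and put $V_0=(V\setminus Y_0)\setminus X_0$. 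This gives $V=X_0\cup V_0\cup Y_0$ with $G[X_0]$ connected and $x\in X_0$; the remaining work is to verify the adjacency constraints and the non-emptiness of $V_0$.

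The key steps, in order, would be: \emph{(1)} There are no edges between $X_0$ and $Y_0$. Suppose $u\in X_0$ is adjacent to $w\in Y_0$. Since $X_0$ is the component of $x$ in $G[V\setminus Y_0]$, there is a path in $G[X_0]$ from $x$ to $u$; take $u$ to be the vertex on such a path closest to $x$ that has a neighbour in $Y_0$. A short case analysis on the length of that path, combined with the fact that $x$ has no neighbours in $Y_0$, produces an induced $P_4$ or exhibits that $u=x$, contradicting the choice of $u$ or of $x$. \emph{(2)} Every vertex of $Y_0$ is adjacent to every vertex of $V_0$. If $w\in Y_0$ is non-adjacent to $t\in V_0$, note $t$ is adjacent to $x$ (as $t\notin Y_0$), $w$ is non-adjacent to $x$, and by step~(1) applied to the component structure one locates a vertex of $X_0$ adjacent to $t$ but we instead argue directly: using a shortest $x$--$t$ path inside $G[V\setminus Y_0]$ — which exists only if $t\in X_0$, so in fact $t\in V_0$ means $t$ lies in \emph{another} component — one uses the single edge $x$--$t$ together with $w$ and $x$ to get $\{w,x,t\}$ inducing an independent-ish configuration, and a further vertex to extend to a $P_4$ or a $\ol{K}_3$; the clean version is that $\{x,w\}$ non-edge, $\{x,t\}$ edge, $\{w,t\}$ non-edge already forces, with any common structure, the forbidden subgraph unless $w$ is adjacent to $t$. \emph{(3)} $V_0$ is not a clique: this is where the hypothesis that $G$ has \emph{no clique cut-set} is used. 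If $V_0$ were a clique, then (having shown no $X_0$--$Y_0$ edges in step~(1) and complete $Y_0$--$V_0$ adjacency in step~(2)) the set $V_0$ together with a suitable subset would separate $X_0$ from $Y_0$, i.e.\ would contain a clique cut-set of $G$; more precisely $N(X_0)\subseteq V_0$ and $V_0$ is a clique, so $V_0$ itself is a clique cut-set separating $X_0$ from $Y_0$ (both non-empty), a contradiction. This simultaneously forces $V_0\neq\emptyset$. \emph{(4)} Every vertex of $V_0$ is adjacent to at least one vertex of $X_0$: by definition $V_0\subseteq V\setminus Y_0$ so every $t\in V_0$ is adjacent to $x$ — but $x\in X_0$, so this is immediate; one only needs to double-check that $V_0$ is genuinely the union of the \emph{non-}$x$ components, which is how it was defined. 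Finally $X_0\neq\emptyset$ since $x\in X_0$ and $Y_0\neq\emptyset$ since $y\in Y_0$.

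I expect step~(2) to be the main obstacle: ruling out a non-edge between $Y_0$ and $V_0$ requires relating a vertex $t$ of $V_0$ (which is adjacent to $x$ but in a different component of $G[V\setminus Y_0]$ than $X_0$ — wait, it is in the \emph{same} ambient graph $G[V\setminus Y_0]$, just a different component from $x$'s) back to $X_0$, and the argument must carefully exploit both the forbidden $P_4$ and the forbidden $\ol K_3$ together with connectivity, since neither alone suffices. The cleanest route is probably to first prove step~(1) in full, then observe $N(X_0)\cap Y_0=\emptyset$ and $N(X_0)\subseteq V_0$, and only then derive steps~(2)--(4); in particular step~(3) should be isolated as the unique place the ``no clique cut-set'' hypothesis enters, which also pins down why $V_0$ must be non-empty.
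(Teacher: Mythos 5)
Your construction collapses at the very first step. With $Y_0=\{v\in V: v\neq x,\ (v,x)\notin E\}$ you have $V\setminus Y_0=\{x\}\cup N(x)$, and every vertex of $N(x)$ is adjacent to $x$; hence $G[V\setminus Y_0]$ is connected and the connected component of $x$ in it is all of $V\setminus Y_0$. So your $X_0$ equals $V\setminus Y_0$ and your $V_0$ is empty, which already contradicts the conclusion you are trying to prove (and your own step~(4) makes this visible: every $t\in V_0$ is adjacent to $x$, hence lies in the component of $x$, hence is not in $V_0$). The confusion you flag in step~(2) about whether $t$ is in the same or a different component is a symptom of this. A second, independent problem: your steps (1) and (2) derive contradictions by ``producing an induced $P_4$ or $\ol{K}_3$'', but Proposition~\ref{P:part} has no forbidden-subgraph hypothesis at all --- the paper explicitly remarks after its proof that excluding $C_n$, $n\geq 5$, and $D_1$--$D_4$ is not needed here --- so exhibiting a $P_4$ or $\ol{K}_3$ is not a contradiction. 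Indeed, the full adjacency between $Y_0$ and $V_0$ genuinely fails for a naive choice of $X_0$ (take $G=C_5$, $x$ a vertex: a non-neighbour of $x$ need not be adjacent to every neighbour of $x$), so no purely local argument around a single vertex $x$ can work.

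The missing idea is an extremality argument. The paper defines, for any $X\subseteq V$, the sets $V_X$ (vertices outside $X$ with a neighbour in $X$) and $Y_X=V\setminus(X\cup V_X)$, and then chooses $X_0$ to be a set of \emph{maximal cardinality} among those with $G[X_0]$ connected and $Y_{X_0}\neq\emptyset$ (a singleton non-universal vertex shows such sets exist). The absence of $X_0$--$Y_0$ edges and the adjacency of each $v_0\in V_0$ to $X_0$ are then true by definition; connectivity (from the no-clique-cut-set hypothesis) gives $V_0\neq\emptyset$; $V_0$ not a clique is exactly your step~(3), which is the one part of your outline that matches the paper. The crucial point you lack is that if some $v_0\in V_0$ missed a vertex of $Y_0$, then $X_0\cup\{v_0\}$ would still be connected with $Y_{X_0\cup\{v_0\}}\neq\emptyset$, contradicting maximality --- this is what forces every vertex of $Y_0$ to be adjacent to every vertex of $V_0$, with no appeal to forbidden subgraphs.
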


\begin{proof}[\bf Proof.]
	For $X\subset V$, define
	\[
	V_X=\{v\in V\setminus X \colon (v,x)\in E  \mbox{ for some }x\in X\}\ands
	Y_X=V\setminus (X\cup V_X).
	\]
	Then $V=X\cup V_X\cup Y_X$ is a partitioning of $V$. Now chose $X_0\subset V$ to be a subset of $V$ of maximal cardinality so that $G[X_0]$ is connected and $Y_0:=Y_{X_0}\neq\emptyset$. Since $G$ is not complete, there exists an $x\in V$ which is not adjacent to all other vertices in $V$. Hence for $X=\{x\}$ we have $Y_X\neq\emptyset$ and $G[X]$ is connected. This shows that $X_0\neq\emptyset$. By construction, there can be no vertices between edges in $X_0$ and $Y_0$ and any $v_0\in V_0:=V_{X_0}$ must be adjacent to a vertex in $X_0$. The graph $G$ has no clique cut-sets, and hence is connected. Therefore, also $V_0\neq \emptyset$. If $V_0$ were a clique, then it would be a clique cut-set, which we assume not to exist.

	It remains to show that every vertex in $Y_0$ is adjacent to every vertex in $V_0$. If this were not the case, then there would be a $v_0\in V_0$ which is not adjacent to all vertices in $Y_0$. However, then we could add $v_0$ to $X_0$, contradicting the maximality assumption.
\end{proof}

For the previous result it was not needed to exclude the graphs $C_n,$ $n\geq 5,$ or $D_1-D_4$ as induced subgraphs. Excluding these graphs leads to the following proposition, which in turn allows us to apply Theorem \ref{T:graphstruc1} to $G[V_0],$ since the proposition shows that the graph induced by $V_0$ contains no $\ol{K}_3$ or $P_4.$

\begin{proposition}\label{P:Struc1App}
	Let $G=(V,E)$ be a simple undirected graph which is not a complete graph and has no clique cut-sets. Assume that the cycles $C_n$, $n\geq 5$, and the graphs $D_1-D_4$ are not induced subgraphs of $G$. Let $V=X_0\cup V_0\cup Y_0$ be a partitioning as in Proposition \ref{P:part}. Then $\ol{K}_3$ and $P_4$ are not induced subgraphs of $G[V_0]$.
\end{proposition}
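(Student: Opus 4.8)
The plan is to suppose, for contradiction, that $G[V_0]$ contains an induced $\ol{K}_3$ or an induced $P_4$, and in each case extend the offending set by vertices from $X_0$ and $Y_0$ to produce one of the forbidden graphs $C_n$ ($n\geq 5$), $D_1,\dots,D_4$, contradicting the hypothesis. The structural data supplied by Proposition \ref{P:part} — $G[X_0]$ is connected, no edges run between $X_0$ and $Y_0$, every vertex of $V_0$ has a neighbour in $X_0$, and every vertex of $Y_0$ is adjacent to every vertex of $V_0$ — is exactly what lets us do these extensions, so the first step is to fix notation for these facts and record the minimal "bridging" consequences: for any $v\in V_0$ choose $x(v)\in X_0$ adjacent to $v$, and for any two vertices $v,v'\in V_0$ there is a path in $X_0$ (possibly trivial) joining $x(v)$ to $x(v')$, because $G[X_0]$ is connected.

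For the $\ol{K}_3$ case, suppose $a,b,c\in V_0$ are pairwise non-adjacent. Since $Y_0\neq\emptyset$, pick $y\in Y_0$; then $y$ is adjacent to each of $a,b,c$ but (via $X_0$) to none of the $X_0$-vertices. I would first try to reach $D_1$ (the graph whose complement $\ol D_1$ is a 6-vertex path, i.e.\ $D_1$ is the complement of $P_6$) or $D_4$ by attaching a short connector in $X_0$: choosing neighbours $x_a,x_b\in X_0$ of $a,b$ and a shortest $x_a$–$x_b$ path inside $X_0$, the set $\{a,b,y\}$ together with that path, chosen minimally, should induce either one of $D_1$–$D_4$ or a long induced cycle $C_n$; the non-adjacency of $a,b$ and the total (non-)adjacency pattern of $y$ force chordlessness of the resulting configuration after one passes to a shortest such path. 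The third vertex $c$ is then used to block the remaining small cases: if the connector is too short to give $C_n$ with $n\geq 5$, the presence of $c$ (also adjacent to $y$, also with a neighbour in $X_0$) supplies the extra vertex needed to complete a $D_j$. One has to be careful that $c$ may itself be adjacent to some of the chosen $X_0$-vertices; I would handle this by choosing the $X_0$-vertices and paths greedily/minimally so that all adjacencies outside the intended edge set are excluded, shrinking $X_0$-paths whenever a chord appears.

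For the $P_4$ case, suppose $a$–$b$–$c$–$d$ is an induced path in $V_0$ (so $ab,bc,cd\in E$ and $ac,ad,bd\notin E$). Again take $y\in Y_0$: $y$ is adjacent to all of $a,b,c,d$, so $\{a,b,c,d,y\}$ already induces a specific 5-vertex graph ($P_4$ plus a dominating vertex, the "fan" $F_4$). If this graph is one of $D_1$–$D_4$ we are done; otherwise we augment using $X_0$. Pick $x_a\in X_0$ adjacent to $a$ and $x_d\in X_0$ adjacent to $d$ (they may coincide). If $x_a=x_d=:x$ and $x$ is adjacent to neither $b$ nor $c$, then $\{x,a,b,c,d\}$ induces a $C_5$; if $x$ is additionally adjacent to some of $b,c$ we get a smaller listed graph among $D_1$–$D_4$. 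If $x_a\neq x_d$, take a shortest $x_a$–$x_d$ path in $X_0$ (exists by connectedness) and prune $X_0$-vertices that have chords to $\{a,b,c,d\}$; what remains induces a chordless cycle through $a,b,c,d$ and the $X_0$-path, of length $\geq 5$, i.e.\ some $C_n$ with $n\geq 5$ — unless the path is so short that the cycle would have length $3$ or $4$, in which case one of $D_1$–$D_4$ appears instead. As before, the role of $y$ is available throughout to convert a "near miss" (a would-be $C_4$, say) into one of the $D_j$'s by adding the dominating vertex.

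The main obstacle I anticipate is the bookkeeping in the case analysis: a priori a vertex of $X_0$ adjacent to $a$ might also be adjacent to $b,c$ or $d$ in any pattern, and vertices of $V_0$ in the $\ol{K}_3$ may share or not share their $X_0$-neighbours, so there are many sub-configurations. The clean way to organise this is the "take a shortest/minimal connecting path in $X_0$, then delete any $X_0$-vertex having a chord" reduction, which collapses all the generic cases to "long induced cycle" and leaves only boundedly many degenerate small cases; those small cases are precisely where $D_1$–$D_4$ (and the dominating vertex $y$) come in, and one checks by hand that each small pattern is isomorphic to an induced $D_j$. Verifying that the list $D_1,\dots,D_4$ together with $C_n$, $n\geq5$, really does cover every degenerate pattern — equivalently, that no further forbidden graph is needed here — is the delicate point and is where I expect the proof in the paper to spend most of its effort.
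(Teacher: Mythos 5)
Your overall strategy --- extend an offending $\ol{K}_3$ or $P_4$ in $V_0$ by vertices of $X_0$ and a dominating vertex $y\in Y_0$, and use shortest paths in $X_0$ to manufacture long induced cycles --- is the right one in outline, but the proposal has a genuine gap: it never isolates the one lemma that makes the case analysis finite, namely that any two \emph{non-adjacent} vertices of $V_0$ have a \emph{single common} neighbour in $X_0$ (the paper's Lemma \ref{L:commonvert}, proved exactly by your ``shortest path in $X_0$ plus $y$ closes a $C_n$, $n\geq 5$'' observation), and, building on that, that any stable triple in $V_0$ has a single common neighbour in $X_0$ (Corollary \ref{C:commonvert}, which needs a further argument: the three pairwise common neighbours, if forced to be distinct, are pairwise non-adjacent --- else a $C_5$ through $y$ --- and then yield an induced $C_6$). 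With these in hand the proposition reduces to two tiny checks: a stable triple $\{v,w,z\}$ with a common neighbour $x_0\in X_0$ and any $y\in Y_0$ induces $K_{3,2}=D_4$; and for an induced path $[v,w,r,s]$ the common neighbour $x_0$ of the non-adjacent endpoints $v,s$ gives $D_3$, $D_1$ or $C_5$ according as $x_0$ is adjacent to both, one or neither of $w,r$. Your sketch instead leaves ``the third vertex $c$ supplies the extra vertex needed to complete a $D_j$'' and ``one checks by hand that each small pattern is a $D_j$'' unverified, which is precisely the content of the proposition.

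Moreover, the specific mechanism you propose for the residual cases does not work as stated. In the $P_4$ case with $x_a\neq x_d$ you ``prune $X_0$-vertices that have chords to $\{a,b,c,d\}$'' from a shortest $x_a$--$x_d$ path and claim what remains is a chordless cycle of length $\geq 5$; but deleting interior vertices of a path destroys the cycle, and an interior $X_0$-vertex adjacent to $b$ or $c$ cannot be pruned away while preserving any cycle through $a,b,c,d$. The paper sidesteps this entirely: the shortest connecting path is taken between the two non-adjacent $V_0$-vertices themselves (with interior in $X_0$) and closed up through $y$, so that $b$ and $c$ never appear on the cycle and the only possible chord is the excluded edge between the endpoints; this forces the interior to be a single vertex, i.e.\ produces the common neighbour. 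So the missing idea is not mere bookkeeping --- it is the reduction to a single common $X_0$-neighbour, without which the ``boundedly many degenerate small cases'' you defer to are neither bounded nor checked.
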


Before proving this result we will need the following lemma and its corollary. The lemma and its corollary do not only play an important role in the proof of Proposition \ref{P:Struc1App}, but also in several results in the remainder of this section. These two results also appear in \cite{L01}, as Claims 3 and 5, except that Claim 5 in \cite{L01} also includes a uniqueness claim, for which additional graphs have to be excluded. Proofs are added for the sake of completeness.

\begin{lemma}\label{L:commonvert}
	Assume that the cycles $C_n$, $n\geq 5$, are not induced subgraphs of $G$. For any non-adjacent vertices $v,w\in V_0$ there exists a $x\in X_0$ adjacent to both $v$ and $w$.
\end{lemma}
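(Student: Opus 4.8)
The plan is to argue by contradiction: suppose $v,w\in V_0$ are non-adjacent but no single vertex of $X_0$ is adjacent to both. Since $v,w\in V_0=V_{X_0}$, Proposition~\ref{P:part} guarantees that $v$ is adjacent to some $x_v\in X_0$ and $w$ to some $x_w\in X_0$, and by our assumption $x_v\neq x_w$, with $x_v$ not adjacent to $w$ and $x_w$ not adjacent to $v$. The graph $G[X_0]$ is connected, so there is a path in $G[X_0]$ from $x_v$ to $x_w$; choose such a path $P$ of minimal length. The idea is to look at the cycle obtained by travelling $v-x_v-\cdots-x_w-w$ along $P$ and closing up through $v,w$ (recalling $v\not\sim w$), and to extract from it a chordless cycle of length $\geq 5$, contradicting the hypothesis.

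The key steps, in order, are as follows. First I would reduce to the situation where $P=x_v=p_0,p_1,\ldots,p_m=x_w$ is an \emph{induced} path in $G[X_0]$ (minimal length forces this), and where moreover no internal vertex $p_i$ ($0<i<m$) is adjacent to $v$ or to $w$ — if some $p_i$ were adjacent to $v$, then since $p_i\in X_0$ is adjacent to $v$ we could shorten to the sub-path from $p_i$ to $x_w$, still joining a neighbour of $v$ to a neighbour of $w$; iterating, we may assume the \emph{only} vertex of $P$ adjacent to $v$ is $x_v$ and the only one adjacent to $w$ is $x_w$. Second, consider the vertex sequence $W=(v,x_v=p_0,p_1,\ldots,p_m=x_w,w)$. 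Consecutive vertices are adjacent, $v\not\sim w$ closes nothing unwanted, and by the previous reduction the only possible chords are among the $p_i$, which we have excluded. Hence $W$ is a chordless cycle. Third, count its length: it has $m+3$ vertices where $m\geq 1$, so length $\geq 4$; the case $m=1$ (length $4$, i.e.\ an induced $C_4$ on $v,x_v,x_w,w$) must be handled separately since $C_4$ is \emph{not} excluded. In that case $x_v\sim x_w$, and one should instead use the excluded graphs $D_1$–$D_4$ or produce a longer chordless cycle by a different route. Actually the cleaner route for $m=1$: here we have two distinct vertices $x_v,x_w\in X_0$ with $x_v\sim x_w$, $x_v\sim v\not\sim x_w$, $x_w\sim w\not\sim x_v$, $v\not\sim w$; this is exactly the graph $\ol{D}$-type configuration, or one can simply observe that picking the path $P$ of minimal length over \emph{all} pairs $(x,x')\in X_0\times X_0$ with $x\sim v$, $x'\sim w$ makes $m\geq 2$ impossible to avoid only if such short configurations are themselves forbidden — so the honest argument must invoke that $D_1$ (two pendant edges on an edge) is excluded to rule out $m=1$. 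Thus for $m\geq 2$ we get an induced $C_{m+3}$ with $m+3\geq 5$, the desired contradiction; for $m=1$ the configuration contains an induced $D_1$ (or $C_4$ with the extra structure forcing one of $D_1$–$D_4$), again a contradiction.

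I expect the main obstacle to be precisely the bookkeeping around \emph{short} paths $P$ and making sure the cycle one writes down is genuinely chordless — in particular ruling out, or correctly diverting, the $m=1$ case, since $C_4$ is allowed by hypothesis and so a naive ``take a shortest $X_0$-path'' argument does not immediately deliver a long induced cycle. The resolution is to be careful about \emph{which} pair of $X_0$-neighbours of $v$ and $w$ one starts from (choosing the globally shortest connecting path) and, when that path still has length $1$, to unpack the resulting small graph on $\{v,w,x_v,x_w\}$ together with whatever forces $v,w\in V_0$ and recognise it as containing one of the forbidden $D_j$; here the hypothesis that $D_1$–$D_4$ are excluded, not merely the cycles, is what is actually doing the work. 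The rest of the argument — connectivity of $G[X_0]$, the path-shortening reductions, and the chordlessness check — is routine.
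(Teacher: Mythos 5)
There is a genuine gap at the heart of your argument: the vertex sequence $W=(v,x_v=p_0,p_1,\ldots,p_m=x_w,w)$ is \emph{not} a cycle, chordless or otherwise, precisely because $v$ and $w$ are non-adjacent. What you have produced is an induced \emph{path} $P_{m+3}$, and the hypothesis of the lemma only excludes induced cycles $C_n$, $n\geq 5$ --- induced paths of any length are perfectly allowed. Your phrase ``$v\not\sim w$ closes nothing unwanted'' conflates the absence of a chord with the presence of the closing edge; likewise in the $m=1$ case the set $\{v,x_v,x_w,w\}$ induces a $P_4$, not a $C_4$. So no contradiction is reached for any value of $m$. Your attempted patch for $m=1$, invoking the exclusion of $D_1$--$D_4$, is also not available: the lemma assumes only that the cycles $C_n$, $n\geq5$, are excluded, and it is used (e.g.\ in Corollary \ref{C:commonvert} and Proposition \ref{P:Struc1App}) under exactly that hypothesis.

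The missing idea is to use the third piece of the partition from Proposition \ref{P:part}. Pick any $y\in Y_0$ (non-empty by that proposition). Then $y$ is adjacent to every vertex of $V_0$, hence to both $v$ and $w$, and to no vertex of $X_0$. Taking a shortest $v$--$w$ path $P$ whose intermediate vertices lie in $X_0$ (it has at least four vertices, since a three-vertex path would exhibit a common $X_0$-neighbour of $v$ and $w$), the vertex $y$ genuinely closes $P$ into a cycle, and the shortest-path property together with the adjacency structure of $y$ makes that cycle induced of length at least $5$. This single observation handles your $m=1$ case and the general case uniformly, with no appeal to the $D_j$. The rest of your setup --- connectivity of $G[X_0]$, existence of $X_0$-neighbours of $v$ and $w$, and the path-shortening reductions --- is sound, but without the $Y_0$ vertex the argument does not close.
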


\begin{proof}[\bf Proof.]
	Let $v,w\in V_0$ such that there exists no vertex in $X_0$ adjacent to both $v$ and $w$. Let $P$ be a shortest path between $v$ and $w$ whose intermediate vertices are in $X_0$. Since $v$ and $w$ are adjacent to vertices in $X_0$ and $G[X_0]$ is connected, such a path exists, and it must consist of at least four vertices. Choose $y\in Y_0$ arbitrarily. Then $y$ is adjacent to $v$ and $w$, but not to any other vertex in $P$. Thus, adding $y$ we find an induced cycle $C_n$ of length $n\geq 5$, unless when $v$ and $w$ are adjacent. We conclude that if $v,w\in V_0$ are non-adjacent, then there must be a $x\in X_0$ adjacent to both $v$ and $w$.
\end{proof}

\begin{corollary}\label{C:commonvert}
	If the cycles $C_n$, $k\geq 5$, are not induced subgraphs of $G$ and $\{v,w,z\}\subset V_0$ is a stable set in $G$, then there exists a vertex $x\in X_0$ adjacent to $v$, $w$ and $z$.
\end{corollary}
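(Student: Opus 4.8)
\textbf{Proof plan for Corollary \ref{C:commonvert}.} The plan is to use Lemma \ref{L:commonvert} pairwise on the three non-adjacent vertices $v,w,z$ and then argue that the three ``common neighbours'' in $X_0$ can be taken to coincide, or at least that one vertex works for all three. First I would apply Lemma \ref{L:commonvert} to each of the three pairs $\{v,w\}$, $\{w,z\}$ and $\{v,z\}$ to obtain vertices $x_{vw}, x_{wz}, x_{vz}\in X_0$, where $x_{vw}$ is adjacent to both $v$ and $w$, and so on. If any one of these three vertices happens to be adjacent to the third vertex of the triple as well, we are done immediately. So the interesting case is when $x_{vw}$ is non-adjacent to $z$, $x_{wz}$ is non-adjacent to $v$, and $x_{vz}$ is non-adjacent to $w$.

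Next I would look for a short path inside $G[X_0]$ connecting two of these vertices and use the third non-adjacent $V_0$-vertex, together with a vertex $y\in Y_0$, to force a forbidden induced cycle $C_n$ with $n\ge 5$ — this is exactly the mechanism used in the proof of Lemma \ref{L:commonvert}. For instance, take a shortest path in $G[X_0]$ from $x_{vw}$ to $x_{vz}$; appending $v$ (adjacent to both endpoints) and then $z$ (adjacent to $x_{vz}$ but, we are assuming, non-adjacent to $x_{vw}$), and possibly also using $w$ or a vertex of $Y_0$, should produce a chordless cycle of length at least $5$ unless the path is extremely short, i.e.\ unless $x_{vw}$ and $x_{vz}$ are adjacent or equal. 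Handling the ``path is short'' boundary cases is where one has to be careful: if $x_{vw}$ and $x_{vz}$ are adjacent, then $x_{vw}$ is adjacent to $v$, $w$ (by choice) and is one edge away from being adjacent to $z$ — here I would examine the small configuration $\{v,w,z,x_{vw},x_{vz}\}$ (or a $y\in Y_0$) directly and show it contains an induced $D_1$--$D_4$ or $C_5$, contradicting the hypotheses. One should note the typo in the statement (``$C_n$, $k\geq 5$'' should be ``$C_n$, $n\geq 5$''); presumably the hypothesis is that no $C_n$, $n\ge 5$, is induced, and that is what I would use, possibly together with the exclusion of $D_1$--$D_4$ if the pure cycle argument does not close every case.

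The main obstacle I anticipate is precisely the bookkeeping in the degenerate cases: when the three auxiliary vertices $x_{vw}, x_{wz}, x_{vz}$ coincide or lie at small pairwise distance in $G[X_0]$, the ``build a long induced cycle'' trick degenerates, and one must instead identify a specific small forbidden subgraph. A clean way to organize this is to first prove the cleanest subcase — $x_{vw}$ nonadjacent to $z$, $x_{wz}$ nonadjacent to $v$, $x_{vz}$ nonadjacent to $w$, with all three distinct — and then reduce the remaining subcases (two of them equal, or two of them adjacent) to it or dispatch them by inspection. I would also keep in mind that $v,w,z$ being a stable set means any vertex $y\in Y_0$ is adjacent to all of $v,w,z$, which gives a free extra vertex to close cycles; this is likely the key to ruling out the shortest-path-of-length-two case. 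Once a single $x\in X_0$ adjacent to $v$, $w$ and $z$ is produced in every case, the corollary follows.
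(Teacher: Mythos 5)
Your opening move matches the paper's: apply Lemma \ref{L:commonvert} to the three pairs to get $x_{vw},x_{wz},x_{vz}\in X_0$, and reduce to the case where each is non-adjacent to the third vertex of its triple (which in particular forces the three to be distinct, so that subcase needs no separate treatment). But the decisive step is missing from your plan. The paper closes with two short observations you never reach. First, any adjacency among $x_{vw},x_{wz},x_{vz}$ already yields an induced $C_5$ using one vertex $y\in Y_0$: e.g.\ if $x_{vw}\sim x_{vz}$ then $y-w-x_{vw}-x_{vz}-z-y$ is chordless, because $y$ is adjacent to all of $V_0$ but to nothing in $X_0$, $w\not\sim x_{vz}$ and $z\not\sim x_{vw}$ by the case assumption, and $w\not\sim z$ since $\{v,w,z\}$ is stable. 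Second, if instead the three are pairwise non-adjacent, then $v-x_{vw}-w-x_{wz}-z-x_{vz}-v$ is an induced $C_6$ on those six vertices outright --- no paths in $G[X_0]$ are needed anywhere. Your proposed mechanism (a shortest path in $G[X_0]$ between two of the auxiliary vertices, with $v$, $z$, $w$ or $y$ appended) lacks the control that makes the proof of Lemma \ref{L:commonvert} work: there the closing vertex $y\in Y_0$ is adjacent to exactly the two endpoints of the path, which lie in $V_0$, whereas a path between two $X_0$-vertices gives you no handle on which of its internal vertices $v$ or $z$ touch, so chordlessness is not guaranteed; the difficulty is not confined to the ``degenerate'' short-path cases you flag.

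A further point: you suggest falling back on the exclusion of $D_1$--$D_4$. The corollary's hypothesis is only that no $C_n$, $n\ge 5$, is induced (the ``$k\ge 5$'' in the statement is indeed a typo for ``$n\ge 5$''), so those graphs are not available as forbidden configurations here --- and they are not needed, since only $C_5$ and $C_6$ occur in the argument above.
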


\begin{proof}[\bf Proof.]
	Assume there is no vertex in $X_0$ adjacent to $v$, $w$ and $z$. By Lemma \ref{L:commonvert}, there exist vertices $x,s,t\in X_0$ with $x$ adjacent to $v$ and $w$, $s$ adjacent to $v$ and $z$, and $t$ adjacent to $w$ and $z$, and by our assumption $x$, $s$ and $t$ must be distinct. Moreover, $x$, $s$ and $t$ are pairwise non-adjacent, since if this were not the case, we would obtain an induced $C_5$ on one of the subsets $\{y,v,x,s,z\}$, $\{y,w,s,t,v\}$ or $\{y,w,x,t,z\}$, with $y\in Y_1$ arbitrary. However, with $x$, $s$ and $t$ pairwise non-adjacent the subset $\{x,w,s,z,t,v\}\subset V$ induces a $C_6$, a contradiction. Thus there must be a vertex in $X_0$ adjacent to $v$, $w$ and $z$.
\end{proof}

\begin{proof}[\bf Proof of Proposition \ref{P:Struc1App}.]
	Assume $\{v,w,z\}$ is a stable set in $G[V_0]$. By Corollary \ref{C:commonvert}, there exists a $x_0\in X_0$ adjacent to $v$, $w$ and $z$. Then $\{v,w,z,x_0,y\}$, with $y\in Y_0$ arbitrary, induces a $D_4$ in $G$. Hence $G[V_0]$ cannot contain a stable set of size 3.

	Assume that $[v,w,r,s]$ is an induced $P_4$ in $G[V_0]$. Then $(v,s)\notin E$. By Lemma \ref{L:commonvert}, there exists a $x_0\in X_0$ adjacent to $v$ and $s$. Let $y\in Y_0$. Depending on whether $x_0$ is adjacent to both, one or none of $w$ and $r$, we obtain an induced $D_3$, $D_1$ or $C_5$ (on $\{v,w,r,s,x_0\}$, since $\ol{C_5}=C_5$), respectively; see Figure \ref{F:ForbGraph1}.
	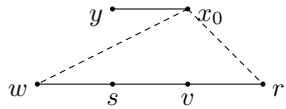
\begin{figure}[ht]
		\centering
		\begin{tikzpicture}[scale=0.5,line cap=round,line join=round,>=triangle 45,x=1.0cm,y=1.0cm]
		\clip(0.,0.) rectangle (8.,3.5);
		\draw (1.,1.)-- (3.,1.);
		\draw (3.,1.)-- (5.,1.);
		\draw (5.,1.)-- (7.,1.);
		\draw (3.,3.)-- (5.,3.);
		\draw [dash pattern=on 2pt off 2pt] (5.,3.)-- (7.,1.);
        \draw [dash pattern=on 2pt off 2pt] (5.,3.)-- (1.,1.);
		\draw (5.,3.24) node[anchor=north west] {$x_0$};
		\draw (3.,3.24) node[anchor=north east] {$y$};
		\draw (1.,1.24) node[anchor=north east] {$w$};
		\draw (3.,1.) node[anchor=north] {$s$};
		\draw (5.,1.) node[anchor=north] {$v$};
		\draw (7.,1.24) node[anchor=north west] {$r$};
		\begin{scriptsize}
		\draw [fill=black] (1.,1.) circle (1.5pt);
		\draw [fill=black] (3.,1.) circle (1.5pt);
		\draw [fill=black] (5.,1.) circle (1.5pt);
		\draw [fill=black] (7.,1.) circle (1.5pt);
		\draw [fill=black] (3.,3.) circle (1.5pt);
		\draw [fill=black] (5.,3.) circle (1.5pt);
		\end{scriptsize}
		\end{tikzpicture}\caption{The complementary graph on the set $\{v,w,r,s,x_0,y\}$}\label{F:ForbGraph1}
	\end{figure}

	\noindent
	Each case leads to a contradiction, hence we obtain that $G[V_0]$ cannot contain $P_4$ as an induced subgraph.
\end{proof}

In light of Proposition \ref{P:Struc1App} we may apply Theorem \ref{T:graphstruc1} to obtain $G[V_0]\in\cal{F}_k$ for some $k\geq 1$; $k=0$ cannot occur, since $V_0$ is not a clique. Denote the `paired' cliques in Figure \ref{F:F_k} by $X_i, Y_i,$ for $i=1,\ldots,k,$ and the `single' clique by $Z$. The structure of $G[V_0]$ and the interaction between the vertices in $Y_0$ and $V_0$ and vertices in $X_0$ and $Y_0$ is now understood. It remains to show that $Y_0$ is a clique and that $X_0$ is a clique all of whose vertices are adjacent to all vertices in $V_0$. The former is now easy to prove.

\begin{lemma}\label{L:Y0clique}
	Assume a simple undirected graph $G$ is not a complete graph, has no clique cut-sets, and does not contain, as induced subgraphs, any of the graphs $C_n$, $n\geq 5$, or $D_1-D_4.$ Then $Y_0$ from the partitioning of Proposition \ref{P:part} is a clique.
\end{lemma}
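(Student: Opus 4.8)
\textbf{Plan for the proof of Lemma \ref{L:Y0clique}.}
The goal is to show that any two vertices $y,y'\in Y_0$ are adjacent, and the natural strategy is to suppose $y$ and $y'$ are non-adjacent and find a forbidden induced subgraph. First I would recall what is already available: $V_0$ is not a clique, so there exist non-adjacent $v,w\in V_0$; every vertex of $Y_0$ is adjacent to every vertex of $V_0$ (Proposition \ref{P:part}), so both $y$ and $y'$ are adjacent to $v$ and to $w$; there are no edges between $X_0$ and $Y_0$; and by Lemma \ref{L:commonvert} there is an $x\in X_0$ adjacent to both $v$ and $w$. The vertex set $\{x,v,w,y,y'\}$ is then the first candidate to examine.

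Next I would analyse the induced subgraph on $\{x,v,w,y,y'\}$ under the assumption $(y,y')\notin E$. We have the edges $xv, xw, yv, yw, y'v, y'w$ present, and the non-edges $vw$, $xy$, $xy'$, $yy'$. This is precisely the graph $D_4$ (a "book": two triangles $xvw$-type structure sharing... more precisely $v,w$ form the spine adjacent to the three independent vertices $x,y,y'$), or equivalently its complement is the graph $\ol{D}_4$ shown in Figure \ref{F:D1toD6}: on $\{x,y,y'\}$ we get a triangle in the complement (since $x,y,y'$ are pairwise non-adjacent in $G$), the pair $v,w$ is an edge in the complement, and $v,w$ are non-adjacent in the complement to each of $x,y,y'$. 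So $\{x,v,w,y,y'\}$ induces $D_4$ in $G$, contradicting the hypothesis that $D_4$ is not an induced subgraph of $G$.

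The one point requiring a little care is making sure the five vertices $x,v,w,y,y'$ are genuinely distinct, so that we really have an induced subgraph on five vertices. Clearly $y\neq y'$ by assumption, and $v\neq w$ since they are non-adjacent; $x\in X_0$ while $v,w\in V_0$ and $y,y'\in Y_0$, and $X_0, V_0, Y_0$ are disjoint by Proposition \ref{P:part}, so $x\notin\{v,w,y,y'\}$; finally $\{v,w\}\cap\{y,y'\}=\emptyset$ because $V_0\cap Y_0=\emptyset$. Thus all five vertices are distinct and the contradiction stands. I would then conclude that $Y_0$ is a clique. I do not expect any genuine obstacle here; the content of the lemma is essentially bookkeeping, with the real work having been done in establishing the partition of Proposition \ref{P:part} and the common-neighbour Lemma \ref{L:commonvert}.
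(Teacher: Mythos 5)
Your proof is correct and is essentially the paper's argument: both build the forbidden $D_4$ (i.e.\ $K_{2,3}$) on a five-vertex set consisting of the two non-adjacent vertices of $Y_0$, a non-adjacent pair in $V_0$, and a common neighbour of that pair in $X_0$ supplied by Lemma \ref{L:commonvert}. The only (immaterial) difference is that you take an arbitrary non-adjacent pair $v,w\in V_0$ directly from the fact that $V_0$ is not a clique, whereas the paper takes $x_1\in X_1$, $y_1\in Y_1$ from the $\cF_k$-structure of $G[V_0]$; your minor slip in calling $D_4$ a ``book'' is harmless since your complement description correctly matches $\ol{D}_4$.
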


\begin{proof}[\bf Proof.]
	Assume $y_0,y_0'\in Y_0$ are non-adjacent. Let $x_1\in X_1$, $y_1\in Y_1$ and let $x_0\in X_0$ be adjacent to $x_1$ and $y_1$, which is possible according to Lemma \ref{L:commonvert}. Then $y_0$ and $y_0'$ are adjacent to both $x_1$ and $y_1$ and $y_0$, $y_0'$ and $x_0$ are pairwise non-adjacent. This shows that $\{x_1,y_1,x_0,y_0,y_0'\}$ induces a $D_4$ in $G$, a contradiction. Hence all vertices in $Y_0$ are adjacent, as claimed.
\end{proof}

To determine the structure of $X_0$ and its interaction with $V_0$ we first consider the subset
\begin{equation}\label{X0til}
	\wtil{X}_0=\{x_0\in X_0\colon (x_0,v)\in E\mbox{ for every }v\in X_j\cup Y_j,\mbox{ for }j=1,\ldots,k\},
\end{equation}
which we later show to be equal to $X_0$.

\begin{lemma}\label{L:Xtil0Alt}
	The set $\wtil{X}_0$ is also given by
\begin{align}\label{X0hat}
	\wtil{X}_0&=\nonumber\\ & \{x_0\in X_0\colon\mbox{$(x_0,x_j),(x_0,y_j)\in E$ for some $x_j\in X_j$ and some $y_j\in Y_j$}\}.
\end{align}
\end{lemma}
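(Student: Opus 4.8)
The plan is to prove the two set descriptions coincide by showing mutual inclusion. One inclusion is trivial: if $x_0\in X_0$ satisfies $(x_0,v)\in E$ for \emph{every} $v\in X_j\cup Y_j$ and all $j$, then in particular it is adjacent to some $x_j\in X_j$ and some $y_j\in Y_j$ (here we use that $X_j,Y_j\neq\emptyset$ by the standing convention on the classes $\cal{F}_k$), so the set in \eqref{X0til} is contained in the set in \eqref{X0hat}. The content is the reverse inclusion: suppose $x_0\in X_0$ is adjacent to some $x_j\in X_j$ and some $y_j\in Y_j$ for each $j=1,\ldots,k$; I must show $x_0$ is adjacent to \emph{all} vertices of $X_j\cup Y_j$ for \emph{all} $j$.

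First I would fix one index $j$ and a vertex $x_j'\in X_j$ with $x_j'\neq x_j$, and argue $(x_0,x_j')\in E$. The idea is to exploit the rigid structure of $G[V_0]\in\cal{F}_k$ from Figure \ref{F:F_k}: within $\cal{F}_k$, the vertex $x_j$ (in the clique $X_j$) is non-adjacent precisely to the vertices of $Y_j$, while $x_j'$ is likewise non-adjacent precisely to $Y_j$; and $y_j$ is non-adjacent to all of $X_j$ (in particular to both $x_j$ and $x_j'$) but adjacent to everything in the other paired cliques and to $Z$. So inside $V_0$ we have the induced path or near-path relations: $x_j$ is adjacent to $x_j'$, $x_j$ is non-adjacent to $y_j$, $x_j'$ is non-adjacent to $y_j$. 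Now suppose for contradiction $(x_0,x_j')\notin E$. Then on the five vertices $\{y_j, x_j, x_0, x_j', \text{?}\}$ together with a vertex $y_0\in Y_0$ (adjacent to all of $V_0$, and adjacent to $x_0$? — this needs checking) I expect to find a forbidden induced subgraph among $C_5$ and $D_1$–$D_4$. The natural candidate: $x_0$ is adjacent to $x_j$ and to $y_j$ but not to $x_j'$; $x_j$ and $x_j'$ are adjacent, $y_j$ is adjacent to neither; adding $y_0\in Y_0$, which is adjacent to $x_j,x_j',y_j$ but whose adjacency to $x_0$ must be determined. If $y_0$ is adjacent to $x_0$, the set $\{x_0,x_j,x_j',y_j,y_0\}$ should realize one of $D_1$–$D_4$; I will identify which by comparing complements, exactly as in the proof of Proposition \ref{P:Struc1App} (where the trick was to compute in $\ol G$).

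The main obstacle, I expect, is the interaction between $x_0\in X_0$ and the vertices $y_0\in Y_0$: at this stage of the argument we do \emph{not yet} know that every vertex of $X_0$ is adjacent to every vertex of $V_0$, nor anything clean about $X_0$–$Y_0$ edges beyond ``no edges between $X_0$ and $Y_0$'' — wait, in fact Proposition \ref{P:part} \emph{does} say there are no edges between $X_0$ and $Y_0$, so $x_0$ is non-adjacent to every $y_0\in Y_0$. Good — that removes the ambiguity: in the five-vertex set $\{x_0,x_j,x_j',y_j,y_0\}$ the edges are exactly $x_0x_j$, $x_0y_j$, $x_jx_j'$, $y_0x_j$, $y_0x_j'$, $y_0y_j$, with $x_0x_j'$ and $x_0y_0$ and $x_jy_j$, $x_j'y_j$ absent; one then checks this configuration (or its complement) is an induced $C_5$ or one of $D_1$–$D_4$, giving the contradiction. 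The remaining, essentially identical, case is $(x_0,y_j')\notin E$ for some $y_j'\in Y_j$ with $y_j'\neq y_j$, handled symmetrically by swapping the roles of the two paired cliques. Finally, for indices $i\neq j$: once $x_0$ is adjacent to all of $X_j\cup Y_j$, and I must show it is adjacent to all of $X_i\cup Y_i$ — but by hypothesis $x_0$ is already adjacent to \emph{some} $x_i\in X_i$ and some $y_i\in Y_i$, so the single-index argument just given applies verbatim with $i$ in place of $j$. Hence $\wtil X_0$ as defined in \eqref{X0hat} equals $\wtil X_0$ as defined in \eqref{X0til}, completing the proof.
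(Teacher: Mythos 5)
There is a genuine gap in the central step. For the configuration $\{x_0,x_j,x_j',y_j,y_0\}$ you describe, the edges are exactly $x_0x_j$, $x_0y_j$, $x_jx_j'$, $y_0x_j$, $y_0x_j'$, $y_0y_j$, so the complement is the path $y_0-x_0-x_j'-y_j-x_j$, i.e.\ the induced subgraph is $\ol{P_5}$ (the ``house'': a $C_5$ with one chord). This is \emph{not} a forbidden configuration: it has only five vertices, so it cannot contain $D_1$, $D_2$ or $D_3$; it is not $C_5$ (six edges rather than five); and it is not $D_4=\ol{K_3\cup K_2}=K_{2,3}$, since it contains the triangle $\{x_j,x_j',y_0\}$ while $K_{2,3}$ is triangle-free. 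So no contradiction arises at the five-vertex level, and the check you deferred (``one then checks this configuration\dots'') fails. The paper's proof gets around this by introducing a \emph{sixth} vertex: since $x_j$ and the offending vertex of the paired clique are non-adjacent vertices of $V_0$, Lemma \ref{L:commonvert} supplies an $x_0'\in X_0$ adjacent to both, and a four-way case analysis on the adjacencies of $x_0'$ to $x_0$ and to the remaining vertex then produces $D_1$, $D_2$, $D_4$ or $C_5$. Some such auxiliary vertex is unavoidable.

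A second problem is the quantifier on $j$. The way the lemma is used later (the non-emptiness argument in Proposition \ref{P:Xtil0props}, and the step ``$x_j$ and $y_j$ both adjacent to $t_i$ would force $t_i\in\wtil X_0$'' in the proof of Theorem \ref{T:graphstruc2}) requires that adjacency to some $x_j\in X_j$ and some $y_j\in Y_j$ for a \emph{single} index $j$ already forces $x_0\in\wtil X_0$. You instead assume such a pair exists for \emph{each} $j$, which is why your cross-index step can be dismissed as ``verbatim.'' Under the correct reading, showing that $x_0$ is adjacent to all of $X_i\cup Y_i$ for $i\neq j$ is a separate argument with no hypothesis about $X_i\cup Y_i$ available; the paper handles it with its own two-case analysis (adjacent or not to $y_i$), finding $D_3$ or $D_4$ on a six-vertex set. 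Both the five- versus six-vertex issue and the missing cross-index argument need to be repaired for the proof to go through.
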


\begin{proof}[\bf Proof.]
	Write $\what{X}_0$ for the right hand side of \eqref{X0hat}. Clearly, $\wtil{X}_0\subset\what{X}_0$. Now assume $x_0\in\what{X}_0$. Without loss of generality, $j=1$, that is, there exist $x_1\in X_1$ and $y_1\in Y_1$ adjacent to $x_0$.

	We first show $x_0$ is adjacent to all vertices in $X_1\cup Y_1$. Assume this is not the case, say $(x_0,y_1')\notin E$ for a $y_1'\in Y_1$; the case $(x_0,x_1')\notin E$ for a $x_1'\in X_1$ goes analogously. Since $(x_1,y_1')\notin E$, by Lemma \ref{L:commonvert}, there exists a $x_0'\in X_0$ adjacent to $x_1$ and $y_1'$. Let $y_0\in Y_0$. Since the structure of $Y_0$ and $X_1$ and $Y_1$ and their interaction is known, the only remaining possible adjacencies are between $x_0'$ and $x_0$ and between $x_0'$ and $y_1$. Below is a table of the various adjacency cases of $x_0',$ a check in the appropriate box means that $x_0'$ is adjacent to the relevant vertex in $G.$ Figure \ref{F:ForbGraph2} gives a sketch of the subgraph of $\ol{G}$ induced by $\{y_0,x_0,x_0',x_1,y_1,y_1'\}$; for the last case, note that the complement of a $C_5$ is again a $C_5$.
	
	\begin{center}
		\begin{tabular}{|c|c|c|c|}
			\hline
			$x_0$ & $y_1$ & Vertex set & Induced subgraph\\
			\hline
			$\checkmark$ & $\checkmark$ & $\{y_0,x_0,y_1',x_1,y_1',x_0'\}$ & $D_1$\\
			\hline
			$\checkmark$ & & $\{y_0,x_0,y_1',x_1,y_1',x_0'\}$ & $D_2$\\
			\hline
			 & $\checkmark$ & $\{y_0,x_0,x_0',x_1,y_1'\}$ & $D_4$\\
			\hline
			 & & $\{x_0',x_0,y_1',x_1,y_1\}$ & $C_5$\\
			\hline
		\end{tabular}
	\end{center}
	
	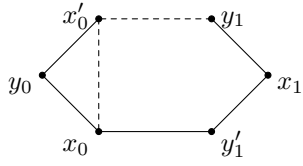
\begin{figure}[ht]
		\centering
		\begin{tikzpicture}[scale=0.75,line cap=round,line join=round,>=triangle 45,x=1.0cm,y=1.0cm]
		\clip(0.25,0.5) rectangle (5.75,3.5);
		\draw (1.,2.)-- (2.,1.);
		\draw (1.,2.)-- (2.,3.);
		\draw (2.,1.)-- (4.,1.);
		\draw (4.,1.)-- (5.,2.);
		\draw (5.,2.)-- (4.,3.);
		\draw [dash pattern=on 2pt off 2pt] (2.,3.)-- (2.,1.);
		\draw [dash pattern=on 2pt off 2pt] (4.,3.)-- (2.,3.);
		\draw (3.9999111751869036,3.3) node[anchor=north west] {$y_1$};
		\draw (2.0011422650473185,3.45) node[anchor=north east] {$x^\prime_0$};
		\draw (1.0017578099775257,2.12787799626425) node[anchor=north east] {$y_0$};
		\draw (2.0011422650473185,1.028493541194458) node[anchor=north east] {$x_0$};
		\draw (3.9999111751869036,1.178493541194458) node[anchor=north west] {$y^\prime_1$};
		\draw (4.999295630256697,2.12787799626425) node[anchor=north west] {$x_1$};
		\begin{scriptsize}
		\draw [fill=black] (1.,2.) circle (1.5pt);
		\draw [fill=black] (2.,1.) circle (1.5pt);
		\draw [fill=black] (2.,3.) circle (1.5pt);
		\draw [fill=black] (4.,1.) circle (1.5pt);
		\draw [fill=black] (5.,2.) circle (1.5pt);
		\draw [fill=black] (4.,3.) circle (1.5pt);
		\end{scriptsize}
		\end{tikzpicture}
		\caption{The complementary graph on the set $\{y_0,x_0,y_1',x_1,y_1',x_0'\}$}\label{F:ForbGraph2}		
	\end{figure}
	
	We see that each case leads to a contradiction, hence $x_0$ is adjacent to all vertices in $X_1\cup Y_1$.

	Next we show $x_0$ is adjacent to all vertices in $X_j\cup Y_j$ for $j>1$. Assume this is not the case for a $j>1$. Without loss of generality there exists a $x_j\in X_j$ non-adjacent to $x_0$. Choose $y_j\in Y_j$ and $y_0\in Y_0$ arbitrarily. All adjacency relations are known apart from whether $x_0$ is adjacent to $y_j$, which gives an induced $D_3$, or whether $x_0$ and $y_j$ are non-adjacent, in which case we find $D_4$ as an induced subgraph; see Figure \ref{F:ForbGraph3}.
	
	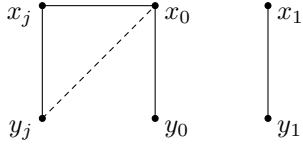
\begin{figure}[ht]
		\centering
		\begin{tikzpicture}[scale=0.75,line cap=round,line join=round,>=triangle 45,x=1.0cm,y=1.0cm]
		\clip(0.25,0.5) rectangle (5.75,3.5);
		\draw (1.,1.)-- (1.,3.);
		\draw (1.,3.)-- (3.,3.);
		\draw (3.,3.)-- (3.,1.);
		\draw (5.,3.)-- (5.,1.);
		\draw [dash pattern=on 2pt off 2pt] (1.,1.)-- (3.,3.);
		\draw (1.0017578099775257,3.1272624513340417) node[anchor=north east] {$x_j$};
		\draw (1.0017578099775257,1.1284935411944579) node[anchor=north east] {$y_j$};
		\draw (3.000526720117111,1.1284935411944579) node[anchor=north west] {$y_0$};
		\draw (3.000526720117111,3.1272624513340417) node[anchor=north west] {$x_0$};
		\draw (4.999295630256697,3.1272624513340417) node[anchor=north west] {$x_1$};
		\draw (4.999295630256697,1.1284935411944579) node[anchor=north west] {$y_1$};
		\begin{scriptsize}
		\draw [fill=black] (1.,1.) circle (1.5pt);
		\draw [fill=black] (1.,3.) circle (1.5pt);
		\draw [fill=black] (3.,3.) circle (1.5pt);
		\draw [fill=black] (3.,1.) circle (1.5pt);
		\draw [fill=black] (5.,3.) circle (1.5pt);
		\draw [fill=black] (5.,1.) circle (1.5pt);
		\end{scriptsize}
		\end{tikzpicture}
		\caption{The complementary graph on the set $\{x_1,y_1,x_j,y_j,x_0,y_0\}$}\label{F:ForbGraph3}
	\end{figure}
	
	Since all cases lead to a forbidden induced subgraph, we reach a contradiction. Hence $x_0$ is adjacent to all vertices in $X_j\cup Y_j$ for $j>1$ as well, and thus $x_0\in \wtil{X}_0$.
\end{proof}


\begin{proposition}\label{P:Xtil0props}
	The set $\wtil{X}_0$ is a non-empty clique in $G.$
\end{proposition}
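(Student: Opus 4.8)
The plan is to prove the two assertions separately: first that $\wtil{X}_0$ is non-empty, then that it is a clique. For non-emptiness, the natural approach is to exhibit a single vertex of $X_0$ that is adjacent to all the `paired' cliques $X_j\cup Y_j$. By Lemma~\ref{L:Xtil0Alt}, it suffices to find one $x_0\in X_0$ adjacent to at least one vertex of $X_1$ and at least one vertex of $Y_1$, since such an $x_0$ automatically lies in $\wtil{X}_0$. Now pick $x_1\in X_1$ and $y_1\in Y_1$; by Theorem~\ref{T:graphstruc1} applied to $G[V_0]\in\cal{F}_k$, these are non-adjacent in $G$, so Lemma~\ref{L:commonvert} supplies an $x_0\in X_0$ adjacent to both $x_1$ and $y_1$. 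This $x_0$ is then in $\what{X}_0=\wtil{X}_0$, so $\wtil{X}_0\neq\emptyset$.

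For the clique property, suppose toward a contradiction that $x_0,x_0'\in\wtil{X}_0$ are non-adjacent. Both are, by definition, adjacent to every vertex of $X_1\cup Y_1$, and by Lemma~\ref{L:Y0clique} the set $Y_0$ is a clique; pick any $y_0\in Y_0$, $x_1\in X_1$, $y_1\in Y_1$. The idea is to find a small forbidden induced subgraph among the six vertices $\{x_0,x_0',x_1,y_1,y_0,\ldots\}$ — one more vertex is needed, and the first candidate to try is a second vertex from one of the paired cliques or from $V_0$, or possibly $y_0$ together with a vertex of $V_0$ non-adjacent to $y_0$'s partner. Since all adjacency relations among $x_1,y_1,x_0,x_0',y_0$ are determined except the relation $x_0\sim x_0'$ (which we assume fails) and the relations of $y_0$ to $x_0,x_0'$ (both hold, as $Y_0$ is adjacent to all of $V_0$ — wait, $x_0,x_0'\in X_0$, not $V_0$), one must be careful: the relation of $y_0$ to $x_0$ and $x_0'$ is in fact \emph{not} yet known, since Proposition~\ref{P:part} only guarantees $Y_0$–$V_0$ adjacency, not $Y_0$–$X_0$ adjacency. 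So the cleaner route is probably to work inside $V_0\cup\{x_0,x_0'\}$: since $V_0$ is not a clique there exist non-adjacent $v,w\in V_0$, and in $G[V_0]\in\cal{F}_k$ such a pair lies in a paired clique pair, say $v\in X_1$, $w\in Y_1$ after relabelling. Then $x_0,x_0'$ are both adjacent to $v$ and to $w$, while $x_0\not\sim x_0'$ and $v\not\sim w$; adding a suitable fifth vertex — a $y_0\in Y_0$, or a further vertex of $V_0$ — should produce a $D_4$ or a $C_5$ in the complement. I would enumerate the cases according to whether a chosen auxiliary vertex is adjacent to $x_0$, to $x_0'$, to both, or to neither, exactly in the style of Lemmas~\ref{L:Y0clique} and~\ref{L:Xtil0Alt}, and read off a forbidden graph in each case, quite possibly with the help of a small complement-graph figure.

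The main obstacle I anticipate is book-keeping which adjacency relations are actually known at this stage. The structure of $G[V_0]$, of $Y_0$, and of the $Y_0$–$V_0$ and $X_0$–$V_0$ interactions are understood, but the $X_0$–$Y_0$ adjacencies and the internal structure of $X_0$ are precisely what is still unknown — indeed $\wtil{X}_0$ being a clique is one half of pinning down $X_0$. So in choosing the five or six vertices for the forbidden-subgraph argument one must restrict to vertices whose mutual relations are all determined except for the single relation $x_0\sim x_0'$ that is being contradicted; using a $y_0\in Y_0$ is only safe if its relations to $x_0,x_0'$ are forced, which they need not be, so the safer choice is a second vertex of $V_0$ (whose relations to everything in $V_0$ and to $Y_0$ are known, and whose relations to $x_0,x_0'$ then become the case-split variables). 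Once the right five- or six-vertex configuration is identified, each branch of the case analysis should close by exhibiting one of $C_5$, $D_1$, $D_2$, $D_3$, $D_4$ as an induced subgraph, contradicting the hypothesis and forcing $x_0\sim x_0'$. Hence $\wtil{X}_0$ is a clique, completing the proof.
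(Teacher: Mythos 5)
Your non-emptiness argument is exactly the paper's: pick $x_1\in X_1$, $y_1\in Y_1$ (non-adjacent since they lie in a paired clique pair of $G[V_0]\in\cal{F}_k$), invoke Lemma \ref{L:commonvert} to get a common neighbour $x_0\in X_0$, and conclude $x_0\in\what{X}_0=\wtil{X}_0$ by Lemma \ref{L:Xtil0Alt}. That half is correct.

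The clique half has a genuine gap, and it stems from a misreading of Proposition \ref{P:part}. You write that ``the relation of $y_0$ to $x_0$ and $x_0'$ is in fact not yet known, since Proposition \ref{P:part} only guarantees $Y_0$--$V_0$ adjacency, not $Y_0$--$X_0$ adjacency.'' But Proposition \ref{P:part} explicitly states there are \emph{no edges between vertices in $X_0$ and $Y_0$}, so the $X_0$--$Y_0$ relations are completely determined: every $y_0\in Y_0$ is non-adjacent to every vertex of $X_0$, in particular to $x_0$ and $x_0'$. This is precisely the fact that closes the argument in one step: if $x_0,x_0'\in\wtil{X}_0$ are non-adjacent, then $\{x_0,x_0',y_0\}$ is a stable set, each of whose vertices is adjacent to both $x_1$ and $y_1$ ($x_0,x_0'$ by definition of $\wtil{X}_0$, and $y_0$ because $Y_0$ is completely joined to $V_0$), while $x_1\not\sim y_1$. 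The complement of $G[\{x_0,x_0',y_0,x_1,y_1\}]$ is then $K_3\cup K_2=\ol{D}_4$, so $G$ contains an induced $D_4$, a contradiction. No case analysis is needed. Having discarded this route, you fall back on an unfinished sketch (``adding a suitable fifth vertex \ldots\ should produce a $D_4$ or a $C_5$'', ``I would enumerate the cases'') that never exhibits a concrete forbidden subgraph; as written, the clique claim is not proved. The fix is simply to use the $X_0$--$Y_0$ non-adjacency from Proposition \ref{P:part} and read off $D_4$ directly.
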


\begin{proof}[\bf Proof]
	Since $\wtil{X}_0$ is also given by \eqref{X0hat} and, by Lemma \ref{L:commonvert}, for $x_1\in X_1$ and $y_1\in Y_1$ there exists a $x_0\in X_0$ adjacent to $x_1$ and $y_1$, we obtain that $\wtil{X}_0\neq\emptyset$.

	Assume $\wtil{X}_0$ is not a clique, say $x_0,x_0'\in \wtil{X}_0$ are non-adjacent. Let $x_1\in X_1$, $y_1\in Y_1$ and $Y_0\in Y_0$. Then $\{x_0,x_0',y_0\}$ forms a stable set (hence an induced $K_3$ in $\ol{G}$) whose vertices are all adjacent to both $x_1$ and $y_1$. Since $(x_1,y_1)\notin E$, we obtain that $\{x_0,x_0',y_0,x_1,y_1\}$ induces $D_4$ in $G$.
\end{proof}

\begin{proposition}\label{P:zadjtilx0}
	Every vertex in $\wtil{X}_0$ is adjacent to every vertex in $V_0$.
\end{proposition}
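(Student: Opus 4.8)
The plan is to argue by contradiction. Since a vertex $x_0\in\wtil{X}_0$ is by definition adjacent to every vertex of $X_j\cup Y_j$ for $j=1,\dots,k$, and $V_0=Z\cup\bigcup_{j=1}^{k}(X_j\cup Y_j)$ with $G[V_0]\in\cal{F}_k$ and $k\ge1$ (as $V_0$ is not a clique), it suffices to show that $x_0$ is adjacent to every vertex of the single clique $Z$. So I will suppose that some pair $x_0\in\wtil{X}_0$, $z\in Z$ has $(x_0,z)\notin E$, and among all such pairs I will pick one for which the length $m$ of a shortest $x_0$--$z$ path inside $G[X_0\cup\{z\}]$ is smallest; this is well defined because $G[X_0]$ is connected and $z$, lying in $V_0$, has a neighbour in $X_0$ (Proposition \ref{P:part}). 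Fix such a path $x_0=p_0-p_1-\cdots-p_m=z$; being shortest it is induced, $p_1,\dots,p_{m-1}\in X_0$, and $m\ge2$. Put $\xi:=p_{m-1}$, so $(\xi,z)\in E$ and $\xi\ne x_0$. I first note $\xi\notin\wtil{X}_0$: otherwise, since $\wtil{X}_0$ is a clique (Proposition \ref{P:Xtil0props}), $\xi$ would be adjacent to $x_0$ and to all of $X_1\cup Y_1$, and then for $y_0\in Y_0$, $x_1\in X_1$, $y_1\in Y_1$ the six vertices $\{x_0,\xi,z,y_0,x_1,y_1\}$ would induce $D_3$ (in $\ol{G}$: the path $z$--$x_0$--$y_0$--$\xi$ plus the disjoint edge $x_1$--$y_1$). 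Likewise, for $1\le i\le m-2$ one has $(p_i,z)\notin E$ (the path is induced), so $p_i\in\wtil{X}_0$ would yield a counterexample with a strictly shorter path, contradicting minimality; hence $p_i\notin\wtil{X}_0$ for every interior $i$, and by Lemma \ref{L:Xtil0Alt} each such $p_i$ misses all of $X_j$ or all of $Y_j$, for every $j$.

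Next I dispose of the case $m=2$. Here $\xi=p_1$ and $(\xi,x_0)\in E$. If $\xi$ has a neighbour in $\bigcup_j(X_j\cup Y_j)$, then after relabelling the pairs $(X_j,Y_j)$ I may take it to be some $x_1\in X_1$; since $\xi\notin\wtil{X}_0$ and $\xi\sim x_1\in X_1$, the vertex $\xi$ misses all of $Y_1$. Then, picking $y_1\in Y_1$ and $y_0\in Y_0$, the set $\{z,x_0,y_0,\xi,y_1,x_1\}$ induces $D_1$ (in $\ol{G}$ it is exactly the path $z$--$x_0$--$y_0$--$\xi$--$y_1$--$x_1$). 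If instead $\xi$ has no neighbour in $\bigcup_j(X_j\cup Y_j)$, then for $x_1\in X_1$ and $y_1\in Y_1$ the set $\{x_0,\xi,z,x_1,y_1\}$ induces $D_4$ (in $\ol{G}$, a triangle on $\{\xi,x_1,y_1\}$ plus the disjoint edge $x_0$--$z$). Either alternative contradicts the hypotheses, so $m=2$ cannot occur.

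The remaining case $m\ge3$, where now $(\xi,x_0)\notin E$, is the technical heart. The basic tool will be a cycle--detour estimate: whenever $v$ is adjacent to both $x_0=p_0$ and $z=p_m$, the set $S_v=\{\,i:1\le i\le m-1,\ (v,p_i)\in E\,\}$ is non-empty (else $\{p_0,\dots,p_m,v\}$ induces $C_{m+2}$), and moreover $\min S_v\le2$, $\max S_v\ge m-2$, and any two consecutive elements of $S_v$ differ by at most $2$, since otherwise splicing a stretch $p_a-p_{a+1}-\cdots-p_b$ of the induced path with $v$ gives an induced cycle of length at least $5$. This applies to every vertex of $\bigcup_j(X_j\cup Y_j)$, each of which is adjacent to $x_0$ (being in $\wtil{X}_0$) and to $z$ (being in $Z$). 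Combining this with the structural fact that each interior $p_i$ misses all of $X_j$ or all of $Y_j$ — which forces the vertices of $X_1$ and those of $Y_1$ to concentrate their neighbours among at most two of the $p_i$ — a finite case analysis (the cases $m=3$ and $m=4$ being representative) should produce, for a suitable $x_1\in X_1$, $y_1\in Y_1$ and $y_0\in Y_0$, an induced copy of $C_5$, of $\ol{D}_2=C_6$, or of $\ol{D}_1=P_6$ on $\{x_0,z,y_0\}$ together with two or three of the $p_i$ and two of $x_1,y_1$; this is the contradiction that closes the proof. I expect this last case to be the main obstacle: the cases $m=2$ and the cycle--detour estimate are routine, but tracking exactly which interior path vertices each vertex of $X_1\cup Y_1$ can be adjacent to, and pinning down a forbidden subgraph in every resulting combination, is where the real work lies.
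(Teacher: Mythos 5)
Your reduction to the clique $Z$, the choice of a minimal counterexample pair $(x_0,z)$ joined by a shortest path through $X_0$, the exclusion of the interior path vertices from $\wtil{X}_0$, and the case $m=2$ (producing $D_1$ or $D_4$) are all correct and coincide with the paper's argument (the paper writes the path as $[z,x_0^{(1)},\dots,x_0^{(j)},x_0]$, so its $j$ is your $m-1$ and its final step $j=1$ is your $m=2$). The genuine gap is the case $m\ge 3$: you do not prove it. Your cycle--detour estimate ($S_v\neq\emptyset$, $\min S_v\le 2$, $\max S_v\ge m-2$, consecutive gaps at most $2$) is correct, but the assertion that it ``forces the vertices of $X_1$ and those of $Y_1$ to concentrate their neighbours among at most two of the $p_i$'' does not follow from it --- the estimate permits, for instance, $S_v=\{1,3,5,\dots\}$ --- and the concluding ``finite case analysis'' is neither carried out nor, for unbounded $m$, finite as stated. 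You explicitly flag this as ``where the real work lies,'' and indeed the proof is not complete without it.

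The paper closes this case with a short argument you already have all the ingredients for. Fix $x_1\in X_1$, $y_1\in Y_1$; both are adjacent to $x_0$ (as $x_0\in\wtil{X}_0$) and to $z$ (as $Z$ is complete to $X_1\cup Y_1$), so by your estimate each has a neighbour among $\{p_{m-2},p_{m-1}\}$. By Lemma \ref{L:Xtil0Alt} no interior $p_i$ (all of which lie in $X_0\setminus\wtil{X}_0$) can be adjacent to both $x_1$ and $y_1$, so after relabelling one gets $x_1\sim p_{m-1}$, $x_1\not\sim p_{m-2}$, $y_1\sim p_{m-2}$, $y_1\not\sim p_{m-1}$. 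If $m\ge 4$, then $[x_0,x_1,p_{m-1},p_{m-2},y_1]$ is an induced $C_5$ (the path being induced gives $x_0\not\sim p_{m-1},p_{m-2}$), a contradiction; if $m=3$, the six vertices $\{x_0,p_1,p_2,x_1,y_1,z\}$ induce $D_2$ (in $\ol{G}$ they form the $6$-cycle $x_0$--$z$--$p_1$--$x_1$--$y_1$--$p_2$--$x_0$), again a contradiction. Supplying this, or an equivalent argument, is necessary before your proof is complete.
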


\begin{proof}[\bf{Proof}]
	By definition, any vertex in $\wtil{X}_0$ is adjacent to all vertices in $X_j\cup Y_j$ for $j=1,\ldots k$. Hence it remains to show they are adjacent to the vertices in $Z$ as well. Assume this is not the case, say $z\in Z$ and $x_0\in \wtil{X}_0$ are non-adjacent. Choose $y_0\in Y_0$, $x_1\in X_1$ and $y_1\in Y_1$ arbitrarily.
	
	Since $z\in V_0$, it must be adjacent to a vertex in $X_0$, see Proposition \ref{P:part}, and since $G[X_0]$ is connected there exists a path from $z$ to $x_0$ whose internal vertices are all in $X_0$. Clearly $j\geq 1$, for $x_0$ is not adjacent to $z$. Let $[z,x_0^{(1)},\ldots,x_0^{(j)},x_0]$ be a shortest path of this type. Now, $x_0^{(1)}$ cannot be in $\wtil{X}_0$. Indeed, if $x_0^{(1)}\in \wtil{X}_0$, then $z,x_0,x_0^{(1)}$ would all be adjacent to both $x_1$ and $y_1$, and $(x_0,x_0^{(1)})\in E$ since $\wtil{X}_0$ is a clique. Thus, with $x_0^{(1)}\in \wtil{X}_0$ we find $D_3$ as an induced subgraph on the vertex set $\lbrace{y_0,x_0,x_0',x_1,y_1,z}\rbrace.$ Since there is no shorter path from $z$ to $x_0$ none of the internal vertices can be adjacent to $z$, apart from $x_0^{(1)}$, or to $x_0$, apart from $x_0^{(j)}$. Without loss of generality $x_0^{(i)}\notin \wtil{X}_0$ for all $i$, otherwise change $x_0$ to $x_0^{(i)}\in \wtil{X}_0$.
	
	We show that $j=1$. Assume $j\geq 2$. Note that $[z,x_0^{(1)},\ldots,x_0^{(j)},x_0,x_1]$ and $[z,x_0^{(1)},\ldots,x_0^{(j)},x_0,y_1]$ are both cycles of length $j+3$ in $G$. If none of the vertices $x_0^{(i)}$ are adjacent to either $x_1$ and $y_1$, then we find $C_{3+j}$ as an induced subgraph. Hence both cycles must contain chords, that is, $x_1$ must be adjacent to an internal vertex and $y_1$ must be adjacent to an internal vertex. Let $x_0^{(i)}$ be the first vertex in the path adjacent to $x_1$. Then $[z,x_0^{(1)},\ldots,x_0^{(i)},x_1]$ is an induced cycle of length $i+2$. Hence $i\leq 2$. Thus, $x_1$ is adjacent to either $x_0^{(1)}$ or $x_0^{(2)}$. Similarly, $y_1$ is adjacent to either $x_0^{(1)}$ or $x_0^{(2)}$. However, $x_1$ and $y_1$ cannot be adjacent to the same internal vertex, since all $x_0^{(i)}$ are in $X_0\setminus \wtil{X}_0$; see Lemma \ref{L:Xtil0Alt}. Thus, without loss of generality, $(x_0^{(1)},x_1)\in E$ and $(x_0^{(2)},y_1)\in E$. If $j=2$, then it follows that $x_0^{(2)}$ is adjacent to $x_0$ and we find $D_2$ as an induced subgraph on the vertex set $\lbrace{x_0,x_0^{(1)},x_0^{(2)},x_1,y_1,z}\rbrace.$ If $j\geq 3$ it follows that $x_0^{(2)}$ is non-adjacent to $x_0$ and so $[x_0,x_1,x_0^{(1)},x_0^{(2)},y_1]$ is an induced cycle in $G$ of length 5. All possibilities lead to a contradiction, hence $j=1$.
	
	Since $j=1$, $x_0^{(1)}$ is adjacent to both $z$ and $x_0$ and to, at most, one of $x_1$ and $y_1$. If $x_0^{(1)}$ were non-adjacent to both $x_1$ and $y_1$, then we find $D_4$ as an induced subgraph on the vertex set $\{x_0^{(1)},x_1,y_1, z,x_0\}$. Say $(x_0^{(1)},x_1)\in E$. Note that $y_0$ is adjacent to $x_1,y_1,z$ and non-adjacent to $x_0, x_0^{(1)}$. Thus we find $D_1$ as an induced subgraph on the vertex set $\lbrace{x_1,y_1,x_0^{(1)},y_0,x_0,z}\rbrace$. Again a contradiction. Hence every vertex $x_0\in X_0$ must be adjacent to every vertex $z\in Z$.
\end{proof}

\begin{proof}[\bf Proof of Theorem \ref{T:graphstruc2}.]
	As observed above, the fact that $D_1-D_4$ contains no clique cut-sets and are not in $\cal{F}$ (easily checked by comparing the complements) proves the necessity part of Theorem \ref{T:graphstruc2}.

	It remains to show that excluding $D_1-D_4$ leads to $G$ being a clique-sum of graphs from $\cal{F}$. Again using the fact that $D_1-D_4$ contain no clique cut-sets, we may assume $G$ has no clique cut-set, and in that case we need to show $G\in\cal{F}$. Let $V=X_0\cup Y_0\cup V_0$ be the partitioning of $V$ from Proposition \ref{P:part}. Applying Proposition \ref{P:Struc1App}, Theorem \ref{T:graphstruc1} and Lemma \ref{L:Y0clique} all that remains is to prove that $X_0$ is a clique, each vertex of which is adjacent to every vertex in $V_0$. The subset $\wtil{X}_0\subset X_0$ defined in \eqref{X0til} turns out to have exactly these properties; see Proposition \ref{P:Xtil0props}. In other words, if we set $T=X_0\setminus \wtil{X}_0$, then $G[V\setminus T]\in \cal{F}_{k+1}$, if $G[V_0]\in\cal{F}_k$. Hence, if $u,v\in V\setminus T$ are non-adjacent, then there must be a $0\leq j\leq k$ such that $u\in X_j$ and $v\in Y_j$ (or conversely), except for $j=0$ where $X_0$ should be replaced by $\wtil{X}_0$. To complete the proof, we show $\wtil{X}_0=X_0$, i.e., $T=\emptyset$.

	Assume $T\neq\emptyset$. We claim that for any connected subset $S\subset T$, the set $A(S):=\{w\in V\setminus T\colon (s,w)\in E\mbox{ for a }s\in S\}$ is a clique. To see that this is the case, it suffices to consider the case that the vertices in $S$ form a path in $T$. So, say $S=\{t_0,\ldots,t_l\}$ with $[t_0,\ldots,t_l]$ a path in $T$. Assume $A(S)$ is not a clique. Then there exists a $j$ such that $x_j\in X_j$, $y_j\in Y_j$ with $x_j,y_j\in A(S)$. It cannot occur that $x_j$ and $y_j$ are both adjacent to a vertex $t_i$ in the path, since Lemma \ref{L:Xtil0Alt} would then imply $t_i\in\wtil{X}_0$. Without loss of generality, we may assume $x_j$ is adjacent to $t_0$ and $y_j$ is adjacent to $t_l$ and $x_j$ and $y_j$ are non-adjacent to all $t_i$ for $i=1,\ldots,l-1$, otherwise, simply take a sub-path with this property. Choose $y_0\in Y_0$ arbitrary. The set $\{t_0,\ldots,t_l,y_j,y_0,x_j\}$ then induces a cycle $C_{4+l}$ in $G$. This contradicts our assumptions, unless we have that $l=0$. However, if $l=0$, $S$ consists of a single vertex, and, as seen above, $A(\{t_0\})$ is a clique, since otherwise, $t_0\in \wtil{X}_0$.

	Now let $S$ be any connected component in $T$ (possibly $T$ itself), then $S$ is non-empty, $A(S)$ is a clique, $S$ and $T\setminus S$ are disconnected and $S$ and $V\setminus (A(S)\cup T)$ are disconnected. Thus $A(S)$ is a clique cut-set in $G$, separating $S$ and $V\setminus (A(S)\cup S)$, a contradiction, since $G$ is assumed not to have clique cut-sets.
\end{proof}
	
	A graph from the class of clique-sums of elements in $\cal{F}$ with no clique cut-set must be in $\cal{F}$ itself. The next corollary then follows directly from Theorems \ref{T:graphstruc1} and \ref{T:graphstruc2}.

\begin{corollary}
	Let $G$ be a simple undirected graph which does not contain any of the cycles $C_n$, $n\geq 5$, or the graphs $D_1-D_4$ from Theorem \ref{T:SO2complex} as induced subgraphs. If $G$ contains no clique cut-set, then $P_4$ and $\ol{K}_3$ also cannot appear as induced subgraphs of $G$.
\end{corollary}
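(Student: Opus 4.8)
The plan is to deduce the corollary directly from the two structure theorems, together with one elementary observation about clique-sums. First I would apply Theorem~\ref{T:graphstruc2}: since $G$ contains none of the cycles $C_n$ with $n\geq 5$ and none of $D_1-D_4$ as an induced subgraph, $G$ is a clique-sum of graphs from the class $\cal{F}$. Write $G=G_1\oplus\cdots\oplus G_m$ as such a clique-sum with $m$ as small as possible.

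The key step is to show that this forces $m=1$, i.e., that $G$ itself lies in $\cal{F}$. Suppose $m\geq 2$. Since $G$ has no clique cut-set it is connected (otherwise the empty clique would be a clique cut-set). Peeling off a leaf of the clique-sum tree, write $G=H\oplus_S G_m$, where $H$ is the clique-sum of $G_1,\ldots,G_{m-1}$ and $S$ is a clique identified with a clique of $H$ and a clique of $G_m$. If $V(G_m)=S$, then $G_m$ adds nothing and $G=H$ is a clique-sum of the $m-1$ graphs $G_1,\ldots,G_{m-1}$; similarly, if $V(H)=S$, then $G=G_m\in\cal{F}$; either way the minimality of $m$ is contradicted. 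Hence both $V(G_m)\setminus S$ and $V(H)\setminus S$ are non-empty. But in a clique-sum there are no edges between $V(H)\setminus S$ and $V(G_m)\setminus S$, so $S$ is a clique cut-set of $G$, a contradiction. Therefore $m=1$ and $G=G_1\in\cal{F}$.

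Finally, the easy implication in Theorem~\ref{T:graphstruc1} --- the observation that no member of $\cal{F}_k$ contains $P_4$ or $\ol{K}_3$ as an induced subgraph --- shows that $G\in\cal{F}$ has neither $P_4$ nor $\ol{K}_3$ as an induced subgraph, which is the assertion.

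The whole argument is short, and the only point requiring a genuine (if brief) argument is the reduction in the second paragraph: one must verify that a clique-sum of $\cal{F}$-graphs with no clique cut-set collapses to a single block. I expect no real obstacle here --- it is a minimal-counterexample argument of the kind already used repeatedly in Section~\ref{S:grapgstruc} --- but it is worth spelling out, because $P_4$ and $\ol{K}_3$ do occur as induced subgraphs of general clique-sums of $\cal{F}$-graphs (they themselves have clique cut-sets), so the hypothesis that $G$ has no clique cut-set is used in an essential way precisely at this step.
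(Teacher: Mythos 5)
Your argument is correct and is exactly the route the paper takes: the paper's one-line justification ("a clique-sum of elements of $\cal{F}$ with no clique cut-set must be in $\cal{F}$ itself, so the corollary follows from Theorems \ref{T:graphstruc1} and \ref{T:graphstruc2}") is precisely your reduction, which you merely spell out in more detail. No further comment is needed.
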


\begin{proof}[\bf Proof of Theorem \ref{T:SO2complex}, (ii) $\Leftrightarrow$ (iii).]
	One easily verifies that none of the cycles $C_n$, $n\geq 5$, or the graphs $D_1-D_6$ are contained in $\cal{K}_2$ or $\cal{K}_3$. Hence, they cannot appear as induced subgraphs of graphs in $\cal{K}_2$ or $\cal{K}_3$. Since none of $C_n$, $n\geq 5$, and $D_1-D_6$ have a clique cut-set, they also cannot appear as induced subgraphs of clique-sums of graphs from $\cal{K}_2$ or $\cal{K}_3$. This proves that (iii) implies (ii).

	Conversely, assume $G$ is a graph that does not contain $C_n$, $n\geq 5$, and $D_1-D_6$ as induced subgraphs. By Theorem \ref{T:graphstruc2}, $G$ is a clique-sum of graphs from the graph class $\cal{F}$. Since $C_n$, $n\geq 5$, and $D_1-D_6$ have no clique cut-sets, all graphs in any clique-sum decomposition of $G$ also cannot contain $C_n$, $n\geq 5$, and $D_1-D_6$ as induced subgraphs. Hence, without loss of generality, we may assume $G$ has no clique cut-sets, and therefore $G\in\cal{F}$. Since $D_6\in\cal{F}_j$ for $j\geq 4$, we have $G\in\cal{F}_2\cup \cal{F}_3$ (note that graphs in $\cal{F}_1$ have a clique cut-set, so $G\notin\cal{F}_1$). Whenever any of the cliques in the clique pairs of $\cal{F}_3$ consists of more than one vertex, $D_5$ appears as an induced subgraph. Consequently, $G$ will either be in $\cal{F}_2=\cal{K}_2$ or in $\cal{F}_3$ with single vertex cliques in the clique pairs, that is, in $\cal{K}_3$. Thus, (ii) implies (iii).
\end{proof}

\section{The complex sparsity order}\label{S:sparord}

In this section we give a brief sketch of the proof of the implication (i) $\Rightarrow$ (ii) of Theorem \ref{T:SO2complex}, mostly for completeness, and a proof of the remaining implication (iii) $\Rightarrow$ (i), thus completing the proof of Theorem \ref{T:SO2complex}. The known proofs of the implication (iii) $\Rightarrow$ (i) all rely strongly on the so-called Dimension Theorem \cite[Corollary 3.1]{AHMR88}. Here we present a more elementary proof of (iii) $\Rightarrow$ (i) that does not require this result, and which holds for both $\BF=\BC$ and $\BF=\BR$. Note that (iii) $\Rightarrow$ (i) indeed holds for $\BF=\BR$ as well, but that this is not the case for (i) $\Rightarrow$ (ii). We start with some known general observations regarding the sparsity order of a graph that we will need in the sequel. Throughout this section $\BF$ will be equal to $\BC$ or $\BR$; we use notation and terminology corresponding to $\BF=\BC$ which should be properly interpreted if $\BF=\BR$.

\begin{lemma}\label{L:OrdF}
Let $G$ be a simple graph. Then the following holds:
\begin{itemize}

\item[(a)] Let $H$ be an induced subgraph of $G$. Then $\ord_\BF(H)\leq \ord_\BF(G)$.

\item[(b)] Let $\wtil{G}$ be the graph obtained by adding a clique in such a way that each vertex in the clique is adjacent to each vertex of $G$. Then $\ord_\BF(\wtil{G})= \ord_\BF(G)$.

\item[(c)] If $G$ is the clique-sum of graphs $G_1,\ldots,G_p$, then
\[
\ord_\BF(G)=\max_{1\leq i\leq p}(\ord_\BF(G_i)).
\]
\end{itemize}
\end{lemma}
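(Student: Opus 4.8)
The plan is to prove the three statements by exhibiting explicit correspondences between extremal elements of the relevant PSD cones, exploiting in each case the structure of the zero pattern.

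\textbf{Part (a).} Let $H = G[S]$ for some $S \subset V$. The key observation is that if $X \in \PSD_H$ is extremal with $\rank X = r$, then the matrix $\wtil{X} \in \cal{H}_G$ obtained by padding $X$ with zeros outside the $S \times S$ block lies in $\PSD_G$, has rank $r$, and — crucially — is still extremal in $\PSD_G$. For extremality one argues: if $\wtil{X} = A + B$ with $A, B \in \PSD_G$, then since $\wtil{X}$ vanishes outside the $S \times S$ block and $A, B \succeq 0$, the diagonal entries of $A$ and $B$ outside $S$ are zero, forcing (by positive semidefiniteness) $A$ and $B$ to be supported on the $S \times S$ block; restricting back to $S$ and using extremality of $X$ in $\PSD_H$ gives $A|_S, B|_S$ proportional to $X$. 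Hence $\ord_\BF(H) = \rank X \le \ord_\BF(G)$, and taking the maximum over extremal $X$ gives the claim.

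\textbf{Part (b).} Write $\wtil{G}$ on vertex set $V \cup W$ with $W$ a clique joined completely to $V$. By part (a) applied to $G = \wtil{G}[V]$ we already have $\ord_\BF(G) \le \ord_\BF(\wtil{G})$. For the reverse inequality, note that $\cal{H}_{\wtil{G}}$ consists of matrices whose only prescribed zeros lie in the $V \times V$ block; there is no constraint involving $W$. Given an extremal $\wtil{X} \in \PSD_{\wtil{G}}$, partition it as $\wtil{X} = \sbm{P & Q \\ Q^* & R}$ with $P$ the $V$-block. After a congruence that is the identity on $V$, one may use the $W$-block freely; the point is that the Schur-complement / range structure lets one reduce $\wtil X$ to a matrix supported (up to the harmless complete part) on $G$ without increasing the rank, while preserving extremality. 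Concretely, one shows that for extremal $\wtil X$ the range of $\wtil X$ intersects the coordinate subspace $\BF^W$ trivially after accounting for multiplicities, so $\rank\wtil X = \rank(\text{its compression to a graph isomorphic to } G)$. This gives $\ord_\BF(\wtil G)\le\ord_\BF(G)$. I expect this to be the most delicate of the three parts, since one must handle the interaction between the free block $R$ and the rank; the cleanest route is probably to show directly that every extremal of $\PSD_{\wtil G}$ is, up to congruence, an extremal of $\PSD_G$ padded by the complete-graph construction.

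\textbf{Part (c).} It suffices to treat a clique-sum of two graphs $G_1, G_2$ glued along a common clique $K$, the general case following by induction. The inequality $\ord_\BF(G) \ge \max_i \ord_\BF(G_i)$ is immediate from part (a), since each $G_i$ is an induced subgraph of $G$. For the reverse, take an extremal $X \in \PSD_G$; the zero pattern of $G$ forces $X$ to have a block form $X = \sbm{A & B & 0 \\ B^* & C & D \\ 0 & D^* & E}$ where the middle block indices are $K$, and the $(1,3)$ block is zero because those vertex pairs are non-edges of $G$. The standard clique-sum decomposition of such a matrix — writing $X$ as a sum of a matrix supported on $V_1 = V(G_1)$ and one supported on $V_2 = V(G_2)$, using a rank-one-type splitting of the overlap block $C$ over the clique $K$ (where all entries are unconstrained) — expresses $X$ as a sum $X = X_1 + X_2$ with $X_i \in \PSD_G$ supported on $V_i$. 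Since $X$ is extremal, both $X_i$ must be scalar multiples of $X$; as they are supported on $V_1$ and $V_2$ respectively and $X \ne 0$ meets both, one of them is $0$, so $X$ is actually supported on a single $V_i$. Then $X$ (compressed to $V_i$) is extremal in $\PSD_{G_i}$ by the argument of part (a) run in reverse, giving $\rank X \le \ord_\BF(G_i) \le \max_j \ord_\BF(G_j)$. Taking the maximum over extremal $X$ completes the proof. The main obstacle here is justifying that the overlap block can always be split so that both pieces remain positive semidefinite with the correct zero pattern; this is exactly the chordal/PSD-completion mechanism behind Theorem \ref{T:SO1}, and I would cite or reprove the relevant one-step completion lemma.
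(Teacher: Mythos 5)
Your parts (a) and (c) are essentially sound, and in fact more self-contained than the paper, which simply cites \cite[Theorem 4.1]{AHMR88} for (a) and \cite[Theorem 3.1]{HPR89} for (c). The padding argument in (a) is complete (zero diagonal entries of a PSD matrix kill their rows and columns, so any decomposition of the padded matrix lives on $S\times S$). The splitting lemma you flag in (c) is standard and easy to supply: factor $X=W^*W$ with $W=[W_1\ W_2\ W_3]$ grouped by $V_1\setminus K$, $K$, $V_2\setminus K$; the zero $(1,3)$ block gives $\im W_1\perp\im W_3$, and taking $P$ the orthogonal projection onto $\im W_1$ yields $X=W^*PW+W^*(I-P)W$ with the two summands supported on $V_1$ and $V_2$ respectively and both in $\PSD_G$. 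So your plan for (c) closes.

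The genuine gap is in part (b), which is the one part the paper actually proves in detail. Your key claim --- that for extremal $\wtil X$ the range of $\wtil X$ meets the coordinate subspace $\BF^W$ trivially, whence $\rank\wtil X$ equals the rank of its compression $X$ to $V$ --- is not a proof and is not even true as stated: a rank-one matrix supported entirely on the added clique $W$ is extremal in $\PSD_{\wtil G}$ and its range lies inside $\BF^W$ (this degenerate case must be split off, as the paper does). For extremals with $X\neq 0$ you need two separate facts, neither of which follows from a statement about ranges alone: (i) $\rank X=\rank\wtil X$, and (ii) $X$ is extremal in $\PSD_G$. The paper obtains (i) by writing $\wtil X=W^*W$ with $W=[W_1\ W_2]$ of full row rank $p$ and observing that if $\rank W_1<p$ then any unit vector $v\perp\im W_1$ gives the nontrivial splitting $\wtil X=W^*(I-vv^*)W+W^*vv^*W$ inside $\PSD_{\wtil G}$ (the second summand vanishes on the $G$-block), contradicting extremality. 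It obtains (ii) by taking a putative decomposition $X=X_1+X_2$ in $\PSD_G$, using $\im X_i^{1/2}\subseteq\im W_1^*$ and Douglas' lemma to write $X_i=W_1^*R_iW_1$ with $R_1+R_2=I$, and lifting to $\wtil X=W^*R_1W+W^*R_2W$ in $\PSD_{\wtil G}$. Without arguments of this kind your reverse inequality $\ord_\BF(\wtil G)\le\ord_\BF(G)$ remains unproved; the Schur-complement/congruence language in your sketch does not by itself preserve either the zero pattern or extremality.
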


\begin{proof}[\bf Proof.]
The claims of (a) and (c) can be found in \cite[Theorem 4.1]{AHMR88} and \cite[Theorem 3.1]{HPR89}, respectively.

We prove (b). Clearly $\ord_\BF(\wtil{G})\geq\ord_\BF(G)$, since any extremal element $X\in \PSD_G$ gives an extremal element $X'=\sbm{X&0\\0&0}\in \PSD_{\wtil{G}}$ of the same rank. Here the zero-blocks correspond to the added vertices.

To show the reverse inequality, let $X'\in \PSD_{\wtil{G}}$. Then $X'$ has the block form
\[
X'=\mat{cc}{X&B^*\\B&Y}\mbox{ with } X\in\PSD_G,
\]
with no other restrictions on $B$ and $Y$ than that $X'$ must be positive semidefinite. If $X=0$, then $B=0$ and $X'$ is extremal if and only if $Y$ is extremal in the matrix cone generated by the added clique, which occurs if and only if $\rank Y=1$. Hence we may assume $X\neq 0$, which we will do in the remainder of the proof.

We show that $X'$ extremal in $\PSD_{\wtil{G}}$ implies $X$ is extremal in $\PSD_G$ and $\rank X =\rank X'$. In particular, for each extremal element in $\PSD_{\wtil{G}}$ there is an extremal element in $\PSD_G$ of the same rank, and hence, $\ord_\BF(\wtil{G})\leq\ord_\BF(G)$, which completes the proof. Now assume $X'$ is extremal in $\PSD_{\wtil{G}}$. Since $X'\geq 0$, we can factor $X'$ as $X'=W^*W$ with $W=\sbm{W_1&W_2}$ full row rank, i.e., $W$ maps onto $\BF^p$ with $p=\rank X'$; the block decomposition of $W$ corresponds to the decomposition of $X'$ above.

Clearly $\rank X \leq\rank X'$. Assume $\rank X <\rank X'$. Then $\rank W_1=\rank X<\rank X'=p$, and hence $\im W_1\neq \BF^p$. Take any $v\in\BF^p$ with $\|v\|=1$ and $v\perp \im W_1$. Set $X_1'=W^*(I-vv^*)W$ and $X_2'=W^*vv^*W$. Then $X'=X_1'+X_2'$ and $X_1',X_2'\geq 0$. Furthermore, since $W_1^*v=0$, we have
\begin{align*}
X_1&=\mat{c}{W_1^*\\W_2^*} (I-vv^*) \mat{cc}{W_1 & W_2}=\mat{cc}{X&B\\B&W_2^* (I-vv^*) W_2},\\
X_2&=\mat{c}{W_1^*\\W_2^*} vv^* \mat{cc}{W_1 & W_2}=\mat{cc}{0&0\\0&W_2^* vv^* W_2}.
\end{align*}
Both are in $\PSD_{\wtil{G}}$, and not a positive scalar multiple of $X'$, since $X\neq0$. This is in contradiction with the extremality of $X'$, hence $\rank X=\rank X'$.

Now assume $X$ is non-extremal, say $X=X_1+X_2$ with $X_i\in \PSD_G$, $X_i\neq\la X$ for a $\la>0$. Since $X,X_1,X_2\geq 0$ the identity $X=X_1+X_2$ implies that $\kr X\subset \kr X_i$, hence $\im X_i \subset \im X$. Then also $\im X_i^\half \subset \im X^\half=\im W_1^*$. By Douglas' Lemma \cite{D66}, $X_i^\half= W_1^* T_i$ for a $p\times p$ matrix $T_i$. Hence $X_i=X_i^\half X_i^\half=W_1^* R_i W_1$ for $R_i=T_iT_i^*$. Now $R_i\neq \la I$ for all $\la>0$, for otherwise $X_i=\la X$. Also, since $W_1^* W_1=X=X_1+X_2=W_1^*(R_1+R_2)W_1$ we may assume $R_1+R_2=I$. Now set $X'_i=W^* R_i W$, $i=1,2$. Then $X'=X_1'+X_2',$ $X_1',X_2'\geq 0$ and $X_i'\neq\la X,$ since $R_i\neq \la I.$ Moreover, $X_i'$ has the form $X_i'=\sbm{X_i & *\\ *& *}$. Since $X_i\in \PSD_G$ we see that $X_i'\in\PSD_{\wtil{G}}$. Again we reach a contradiction with the extremality of $X'$, hence $X$ must be extremal.
\end{proof}

\begin{proof}[\bf Sketch of the proof of Theorem \ref{T:SO2complex} (i) $\Rightarrow$ (ii).]
By Lemma \ref{L:OrdF}(a) it follows that a graph with complex sparsity order less than or equal to 2 cannot contain, as an induced subgraph, a graph with sparsity order greater than 2. Thus, it suffices to show that the graphs $C_n,$ $n\geq 5,$ and $D_1-D_6$ have sparsity order greater than 2. In \cite[Lemma 2.8]{McM88} this is shown to be the case if $G$ is one of $C_n,$ $n\geq 5,$ and $D_1-D_4$ and in \cite[p.556]{L01} it is shown for $D_5$ and $D_6$. To prove these claims one constructs an element $X\in\PSD_G$ with $\rank(X)=p>2$ in the form $X=W^*W$. Let $w_1,\ldots, w_n$ be the columns of $W$ and define
\[
\cU:=\spn\{w_i w_j^*\colon (i,j)\in \overline{E}\}.
\]
A consequence of the Dimension Theorem is that $X$ is extreme in $\PSD_G$ if and only if $\dim_\BC(\cU)=k^2-1$. For $G$ one of the above graphs, this enables one to explicitly construct an extreme element $X$ in $\PSD_G$ with $\rank(X)>2$.
\end{proof}

The main implication of Lemma \ref{L:OrdF} (b) used in the present paper is that the unpaired clique in the graphs from the class $\cF$ is irrelevant for the sparsity order of the graph, hence we may assume it to be void, which we will do in the sequel. We start with a few additional reductions for extremal elements in $\PSD_G$ for $G\in\cF$.

Assume $X\in\PSD_G$ with $G\in \cF_k$ for some $k$ so that the unpaired clique is void and the sizes of the paired cliques are given by $p_{i,l}$ for $i=1,\ldots,k$ and $l=1,2$. Set $p_i=p_{i,1}+p_{i,2}$. Then $X$ has the following form
\begin{align}
& X=\mat{cccc}{M_1&A_{1,2}&\cdots&A_{1,k}\\ A_{1,2}^*&M_2&&\vdots\\ \vdots&&\ddots&A_{k-1,k}\\ A_{1,k}^*&\cdots&A_{k-1,k}^*&M_k}, \label{Xdec}\\
& M_i=\mat{cc}{M_{i,1}&0\\0& M_{i,2}},\ A_{i,j}=\mat{cc}{A_{i,j}^{1,1} & A_{i,j}^{1,2}\\[.1cm] A_{i,j}^{2,1}& A_{i,j}^{2,2}},\notag
\end{align}
where $M_{i,l}\in\BF^{p_{i,l}\times p_{i,l}}$ and $A_{i,j}^{l,m}\in\BF^{p_{i,l}\times p_{j,m}}$. Note that positive semidefiniteness is independent of the choice of basis. However, we have to preserve the structure imposed by $G$, in this case the zero blocks in $M_i$. This still allows us to change the basis for each of the $\BF^{k_{i,l}}$. Further, note that since $X$ is a positive semidefinite block matrix, $\kr M_{i,l}=\coker M_{i,l}$ is contained in $\kr A_{i,j}^{l,m}$ and $\coker (A_{i,j}^{l,m})^*$, for $j=1,\ldots,k$ and $m=1,2$.
Write $\wtil{M}_{i,l}$ for the compressions of ${M}_{i,l}$ to $\kr M_{i,l}^\perp$ and $\coker M_{i,l}^\perp$ and $\wtil{A}_{i,j}^{l,m}$ for the compressions of ${A}_{i,j}^{l,m}$ to $\kr M_{i,l}^\perp$ and $\coker M_{j,m}^\perp$, and define $\wtil{X}$ according to \eqref{Xdec} with ${M}_{i,l}$ and ${A}_{i,j}^{l,m}$ replaced by $\wtil{M}_{i,l}$ and $\wtil{A}_{i,j}^{l,m}$, respectively, writing $\wtil{M}_i$ and $\wtil{A}_{i,j}$ instead of $M_i$ and $A_{i,j}$.
Since $\kr \wtil{M}_i$ is the orthogonal direct sum of $\kr \wtil{M}_{i,1}$ and $\kr\wtil{M}_{i,1}$, the matrix $\wtil{M}_i$ is invertible.
Furthermore, $X$ is extremal if and only if $\wtil{X}$ is extremal in $\PSD_{G'}$ with $G'\in \cF_k$ obtained by reducing the paired cliques of $G$ in size accordingly to the size reduction from $X$ to $\wtil{X}$; the choice of basis for $\kr M_{i,l}^\perp$ is irrelevant in this regard. That $X$ is extremal if and only if $\wtil{X}$ is extremal is easily seen, since if $X$ can be written as $X=X_1+X_2$ for $X_1,X_2\in\PSD_G$, then the kernel $\kr M_{i,l}$, for $i=1,\ldots,k$ and $l=1,2$, must also be included in the kernels and co-kernels of the appropriate blocks of $X_1$ and $X_2$ in their block decompositions of the form \eqref{Xdec}. Hence, without loss of generality we may assume $M_i$ is positive definite. Now set $T=\diag(M_1^{-1/2},\ldots, M_k^{-1/2})$ and $\what{X}=T^* X T$. Then $\what{X}$ is in $\PSD_G$, and in it's block representation \eqref{Xdec} we have $M_{i,l}=I_{p_{i,l}}$, hence $M_i=I_{p_i}$. Moreover, the map $Y\mapsto T^* Y T$ is a bijective map from $\PSD_G$ onto $\PSD_G$, and as a result, $X$ is extremal if and only if $\what{X}$ is extremal. Hence we may assume that $M_i=I_{p_i}$ for each $i$, that is, $X$ takes the form
\begin{equation}\label{Xdec2}
X=\mat{cccc}{I_{p_1}&C_{1,2}&\cdots&C_{1,k}\\ C_{1,2}^*&I_{p_2}&&\vdots\\ \vdots&&\ddots&C_{k-1,k}\\ C_{1,k}^*&\cdots&C_{k-1,k}^*&I_{p_k}},
\end{equation}
where the fact that $X$ is positive implies that all $C_{i,j}$ must be contractions.

We will next specialize to the cases $\cK_2$ and $\cK_3$. The following intermezzo will be of use when dealing with the $\cK_2$ case.\medskip

\paragraph{\bf Intermezzo:\ Two dimensional subspaces with a 2-way splitting}$ $\\
Let $\BF^p$ have two orthogonal sum decompositions:
\begin{equation}\label{CpDec}
\BF^p=\cX_1\oplus\cX_2 \ands \BF^p=\cY_1\oplus\cY_2.
\end{equation}
We seek a subspace $\cZ\subset \BF^p$ of dimension two that splits over both orthogonal decompositions, that is,
\begin{equation}\label{2waysplit}
\cZ= (\cZ\cap\cX_1)\oplus (\cZ\cap\cX_2)=(\cZ\cap\cY_1)\oplus (\cZ\cap\cY_2).
\end{equation}
Clearly, we must have $p\geq2$. The following result shows that such a splitting always exists.

\begin{proposition}\label{P:dim2subs}
Let $p\geq2$. Assume that $\BB{F}^p$ has two orthogonal decompositions $\BB{F}^p=\cal{X}_1\oplus\cal{X}_2=\cal{Y}_1\oplus\cal{Y}_2$. Then there exists a two-dimensional linear space $\cal{Z}\subseteq\BB{F}^p$ that splits over both orthogonal decompositions, that is, \eqref{2waysplit} holds.
\end{proposition}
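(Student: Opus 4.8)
The plan is to find the subspace $\cZ$ by a dimension-counting / continuity argument, reducing to the case $p=2$ whenever possible. First I would treat the trivial situations: if $\cX_1$ and $\cY_1$ share a common nonzero vector $u$, or more generally if one of the four intersections $\cX_i\cap\cY_j$ has dimension $\geq 1$ for two ``complementary'' choices, one can often build $\cZ$ from one-dimensional pieces that already sit inside both decompositions. Concretely, if there exist nonzero $u\in\cX_1\cap\cY_1$ and $v\in\cX_2\cap\cY_2$, then $\cZ=\spn\{u,v\}$ works immediately; and symmetrically with $(\cX_1\cap\cY_2,\cX_2\cap\cY_1)$. Similarly, if $\cX_1\cap\cY_1\neq 0$ but the ``opposite'' intersection vanishes, one can enlarge by a suitably chosen second vector (e.g. a vector lying in $\cX_1$ or $\cY_1$ whose complementary components behave well). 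So the real work is the \emph{generic} case in which all four intersections $\cX_i\cap\cY_j$ are zero; in particular $\dim\cX_1=\dim\cY_1$ and $\dim\cX_2=\dim\cY_2$ by the usual transversality count, and neither $\cX_1$ nor $\cX_2$ can be contained in a $\cY_j$.

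In the generic case my approach is to look for $\cZ$ of the form $\spn\{x_1+x_2,\ y_1+y_2\}$ where $x_i\in\cX_i$, $y_i\in\cY_i$, and to impose the splitting conditions directly. A cleaner route: pick a nonzero $x_1\in\cX_1$ and let $x_2\in\cX_2$ be the $\cX_2$-component of some vector; then $\cZ\cap\cX_1$ and $\cZ\cap\cX_2$ being one-dimensional forces $\cZ=\spn\{a,b\}$ with $a\in\cX_1$, $b\in\cX_2$. Now the $\cY$-splitting condition on $\cZ=\spn\{a,b\}$ says: there is a basis $\{a',b'\}$ of $\spn\{a,b\}$ with $a'\in\cY_1$, $b'\in\cY_2$, equivalently the two lines $\cY_1\cap\spn\{a,b\}$ and $\cY_2\cap\spn\{a,b\}$ are both nontrivial and distinct. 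Writing $a=\alpha_1+\alpha_2$, $b=\beta_1+\beta_2$ with $\alpha_i,\beta_i\in\cY_i$, a vector $\lambda a+\mu b$ lies in $\cY_1$ iff $\lambda\alpha_2+\mu\beta_2=0$ and in $\cY_2$ iff $\lambda\alpha_1+\mu\beta_1=0$. So what we need is that the $\cY_2$-components $\alpha_2,\beta_2$ of $a,b$ are linearly dependent \emph{and} the $\cY_1$-components $\alpha_1,\beta_1$ are linearly dependent, but $\spn\{a,b\}$ is genuinely two-dimensional. This is exactly saying: the $2$-plane $\spn\{a,b\}$ meets both $\cY_1$ and $\cY_2$ nontrivially. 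I would then phrase the search as: among all $2$-planes of the form $\spn\{a,b\}$ with $a\in\cX_1,\,b\in\cX_2$, find one meeting $\cY_1$ (nontrivially); automatically, by complementary dimension count inside that plane, it then also meets $\cY_2$, and one checks the two intersection lines are distinct (else the plane would lie in a single $\cY_j$, contradicting $\cX_i\not\subset\cY_j$ after a little argument).

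To produce such a plane I would use a \emph{continuity / connectedness} argument rather than explicit linear algebra, since the equations are bilinear. Parametrize unit vectors $a\in\cX_1$ by the (connected, when $\dim\cX_1\geq 2$, or a single point mod scalars when $\dim\cX_1=1$) projective space $\mathbb P(\cX_1)$, and for each such $a$ consider the projection $\pi_{\cY_2}(a)$; the condition ``$\spn\{a,b\}$ meets $\cY_1$ for some $b\in\cX_2$'' can be rephrased as a rank-drop condition on a matrix depending continuously on $a$, namely that $\pi_{\cY_2}(a)\in\pi_{\cY_2}(\cX_2)$, i.e. $\pi_{\cY_2}(a)$ lies in the image of $\cX_2$ under the projection onto $\cY_2$. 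If this image is all of $\cY_2$ we are immediately done for every $a$; otherwise $\pi_{\cY_2}|_{\cX_2}$ has a nontrivial kernel, but that kernel is $\cX_2\cap\cY_1=0$ in the generic case, so $\dim\pi_{\cY_2}(\cX_2)=\dim\cX_2=\dim\cY_2$, forcing $\pi_{\cY_2}(\cX_2)=\cY_2$ anyway — so in fact \emph{every} $a\in\cX_1\setminus 0$ extends to a valid plane. The last point to nail down is that the resulting plane is honestly two-dimensional and its two $\cY$-lines are distinct; I would argue this by contradiction using $\cX_1\cap\cY_j=0$ and $\cX_1\not\subset\cY_j$.

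I expect the \textbf{main obstacle} to be the careful bookkeeping of the degenerate (non-generic) cases — where some $\cX_i\cap\cY_j\neq 0$, or where one of $\cX_1,\cX_2$ is contained in or transverse to a $\cY_j$ in a borderline way — making sure the one-dimensional building blocks can always be assembled into a genuinely two-dimensional $\cZ$ with distinct splitting lines, and handling the small-dimensional edge cases $p=2$, $\dim\cX_i=1$ separately. The generic case itself, as sketched above, collapses once one observes that under $\cX_i\cap\cY_j=0$ the projections $\pi_{\cY_j}|_{\cX_i}$ are isomorphisms onto $\cY_j$, which makes the plane-construction automatic.
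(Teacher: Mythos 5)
Your reduction of the generic case is on the right track up to the point where you list the two requirements --- that the $\cY_1$-components of $a,b$ be dependent \emph{and} that the $\cY_2$-components be dependent --- but the step that disposes of the second requirement is wrong. A two-dimensional subspace that meets $\cY_1$ nontrivially does \emph{not} automatically meet $\cY_2$ nontrivially: for $p\geq 4$ the dimension count inside $\BF^p$ gives $\dim(\cZ\cap\cY_2)\geq 2+\tfrac{p}{2}-p\leq 0$, so nothing forces a second intersection line, and no count "inside the plane" helps either. Concretely, after normalizing (as in the paper's proof) so that $\cX_1,\cX_2$ are the two coordinate subspaces and $\cY_1=\ran\sbm{I\\-U^*}$, $\cY_2=\ran\sbm{U\\I}$ with $U$ invertible, take $a=\sbm{x\\0}$, $b=\sbm{0\\z}$. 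Then $\cZ\cap\cY_1\neq\{0\}$ iff $z\parallel U^*x$, while $\cZ\cap\cY_2\neq\{0\}$ iff $x\parallel Uz$; imposing both forces $x$ to be an eigenvector of $UU^*$. So for a given $a$, the first condition determines $b$ up to scalar, and for generic $x$ the second condition then fails. Your conclusion that "every $a\in\cX_1\setminus 0$ extends to a valid plane" is therefore false, and the same error recurs in your closing claim that the isomorphisms $\pi_{\cY_j}|_{\cX_i}$ make the construction automatic.

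The missing idea is exactly the paper's key device: choose $v$ an eigenvector of the positive definite matrix $U^*U$ (such an eigenvector exists over $\BR$ as well as over $\BC$ precisely because $U^*U$ is self-adjoint) and set $\cZ=\spn\{\sbm{Uv\\0},\sbm{0\\v}\}$; then $\sbm{Uv\\-\la v}\in\cY_1$ and $\sbm{Uv\\v}\in\cY_2$ both lie in $\cZ$ and are independent since $\la>0$. Your proposed continuity/connectedness argument is never actually carried out, and as written it is invoked only to justify the (false) automatic step; if made honest it would have to reprove the existence of an eigenvector of a self-adjoint operator, which is where all the content of the generic case sits. The degenerate cases are also handled more cleanly by splitting off $\cK=\cX_i\cap\cY_j$ and passing to $\BF^p\ominus\cK$, as the paper does, than by the ad hoc enlargement of one-dimensional pieces you sketch.
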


\begin{proof}[\bf Proof]
The proof is split into three parts.

\paragraph{\bf Part 1}
We first show that without loss of generality we may assume that $\cal{X}_i\cap\cal{Y}_j=\lbrace{0}\rbrace,$ for all $i,j$. To the contrary, assume $\cK:=\cal{X}_i\cap\cal{Y}_j\neq \lbrace{0}\rbrace$. Set $\cal{\what{X}}_1=\cal{X}_1\ominus\cal{K}$ and $\what{\cal{Y}}_1=\cal{Y}_1\ominus\cal{K}$.  Then $\cH:=\BB{F}^p\setminus\cal{K}=\what{\cal{X}}_1\oplus\cal{X}_2=\what{\cal{Y}}_1\oplus\cal{Y}_2$ and
$\what{\cal{X}}_i\cap\what{\cal{Y}}_j=\lbrace{0}\rbrace$. Note that a 2-dimensional subset of $\cH$ that splits over the two orthogonal decompositions of $\cH$ also splits over the two orthogonal decompositions of $\BF^p$. If $\dim(\cH)\geqslant 2$, then we consider the other intersections till we arrive at the case $\cal{X}_i\cap\cal{Y}_j=\lbrace{0}\rbrace,$ for all $i,j$. If $\dim(\cH)< 2$, then there are two cases to consider:
\begin{itemize}
\item[$(1)$] $\dim(\cal{K})\geqslant 2:$ Take any $\cal{Z}\subseteq\cal{K}$ with $\dim(\cal{Z})=2.$
\item[$(2)$] $\dim(\cal{K})=1:$ Then, necessarily, $p=2$ and we take $\cal{Z}=\BB{F}^p=\BB{F}^2.$
\end{itemize}
Consequently, we need but consider the case where $\cal{X}_i\cap\cal{Y}_j=\lbrace{0}\rbrace,$ for all $i,j$, which we will do in the sequel.

\paragraph{\bf Part 2}
Next we show that $p$ is even, $p_i:=\dim(\cX_i)=p/2$ and $q_i:=\dim(\cY_i)=p/2$ for $i=1,2$. Note that $p=p_1+p_2=q_1+q_2$. Hence there must exist $i,j$ so that $p_i\geq p/2$ and $q_j\geq p/2$. By assumption we have $\dim(\cal{X}_i\cap\cal{Y}_j)=0$ so that
\begin{align*}
p_i+q_j=\dim(\cal{X}_i)+\dim(\cal{Y}_j)&=\dim(\cal{X}_i\cap\cal{Y}_j)+\dim(\cal{X}_i+\cal{Y}_j)
=\dim(\cal{X}_i+\cal{Y}_j)\leqslant p.
\end{align*}
Hence $p\geq p_i+q_j\geq p/2+p/2=p$, so that $p_i+q_j=p$. Since both $p_i,q_j\geq p/2$, we must have $p_i=q_j=p/2$ and $p$ must be even. Also, $p-p_i=p/2=p-q_j$ such that $p_i=q_i=p/2$ for $i=1,2$.

\paragraph{\bf Part 3} In the final part we prove the claim. Set $d=p/2.$ After a change of coordinates we can assume that
$$\cal{X}_1=\spn\lbrace{e_1,\ldots,e_d}\rbrace \quad\text{ and }\quad \cal{X}_2=\spn\lbrace{e_{d+1},\ldots,e_{p}}\rbrace,$$
with $e_1,\ldots,e_p$ the standard basis for $\BF^p$. Let $\{f_1,\ldots,f_d\}$ and $\{f_{d+1},\ldots,f_p\}$ be bases for $\cY_1$ and $\cY_2$, respectively. For $i=1,2$ and $k=1,\ldots,p$ set $f_k^{(i)}=P_{\cX_i}f_k$. Define
\[
T=\mat{cc}{T_{1,1}&T_{1,2}\\ T_{2,1} & T_{2,2}}
=\mat{ccc|ccc}{
f_1^{(1)}&\cdots&f_d^{(1)}&f_{d+1}^{(1)}\cdots&f_p^{(1)}\\\hline
f_1^{(2)}&\cdots&f_d^{(2)}&f_{d+1}^{(2)}\cdots&f_p^{(2)}}\ands
T_i=\mat{c}{T_{1,i}\\T_{2,i}}.
\]
Note that each $T_{i,j}$ is a $d\times d$ matrix. Moreover, $T_i$ is a one-to-one map from $\cX_i$ onto $\cY_i$ for $i=1,2$ and hence the columns of $T_1$ are orthogonal to the columns of $T_2$. The latter is equivalent to $T_{2,2}^*T_{2,1}+T_{1,2}^*T_{1,1}=0$. We now show that $\kr T_{i,j}=\{0\}$ for each $i,j$, hence $T_{i,j}$ is invertible. Indeed, say $0\neq v\in \kr T_{1,j}$. Then $0\neq T_j v\in \cY_j$ and $T_j v=\sbm{0\\ T_{2,j}v}\in \cX_2$. Hence $\cY_j\cap \cX_2\neq \{0\}$, in contradiction with the assumption $\cY_j\cap \cX_i= \{0\}$ for all $i,j$. Similarly, a contradiction is obtained from $0\neq v\in \kr T_{2,j}$. In particular, $T_{1,1}$ and $T_{2,2}$ are invertible, so that we can factor $T$ as
\[
T=\mat{cc}{I_d &T_{1,2}T_{2,2}^{-1}\\ T_{2,1}T_{1,1}^{-1}& I_d} \mat{cc}{T_{1,1}&0\\ 0 & T_{2,2}}.
\]
Applying another change of basis to $\cX_1$ and $\cX_2$, using $T_{1,1}$ and $T_{2,2}$, respectively, we see that without loss of generality $T_{1,1}=I_d=T_{2,2}$. The orthogonality of the columns of $T_1$ and $T_2$ then reduces to $T_{2,1}+T_{1,2}^*=0$, hence $T_{2,1}=-T_{1,2}^*$. In other words, we may assume $T$ has the form
\[
T=\mat{cc}{I_d & U\\ -U^* & I_d}\quad\mbox{and hence}\quad
T_1=\mat{c}{I_d\\ -U^*},\quad T_2=\mat{c}{U\\ I_d},
\]
for some invertible $d\times d$ matrix $U$ (viewed as a one-to-one map from $\cX_2$ onto $\cX_1$).

Let $v\in \cX_2$ be an eigenvector of $U^*U$ with eigenvalue $\la$. Note that $U^*U$ is positive definite, so that $\la>0$. Set
$$
\cal{Z}=\spn\left\lbrace{\begin{pmatrix} Uv \\ 0\end{pmatrix},\begin{pmatrix}0 \\ v\end{pmatrix}}\right\rbrace.
$$
Then clearly $\dim(\cZ)=2$ and $\cZ$ splits over $\BF^p=\cX_1\oplus \cX_2$. Note that $\cZ$ can also be written as
$$\cal{Z}=\spn\left\lbrace{\begin{pmatrix}Uv \\ -\la v\end{pmatrix},\begin{pmatrix}Uv \\ v\end{pmatrix}}\right\rbrace
=\spn\left\{T_1Uv, T_2v\right\}.
$$
The latter identity shows that $\cZ$ also splits over $\BF^p=\cY_1\oplus \cY_2$.
\end{proof}

\paragraph{\bf The case $G\in \cK_2$}
Let $G\in\cK_2=\cF_2$. We show that any extremal element of $\PSD_G$ has rank of at most two. Note that in this case an $X\in \PSD_G$ may be assumed to be of the form
\begin{equation}\label{XdecF2}
X=\mat{cc}{I_{p_1} & C\\ C^* & I_{p_2}}
\quad\mbox{with}\quad
C=\mat{cc}{C_{1,1} & C_{1,2}\\ C_{2,1} & C_{2,2}},
\end{equation}
where the block decomposition of the contraction $C$ is according to $p_1=p_{1,1}+p_{1,2}$ and $p_{2}=p_{2,1}+p_{2,2}$. We may assume $p_{i,j}\neq 0$, for otherwise $X$ may be viewed as an element of $\PSD_{G'}$ for a $G'\in \cF_1$, for which we know $\ord_\BF(G')\leq1$.

\begin{lemma}\label{L:unitary}
Assume $X$ in \eqref{XdecF2} is extremal. Then $C$ is unitary. In particular, $\rank X=p_1=p_2$. Furthermore, $p$ is even and $p_{i,j}=p/2$ for $i,j=1,2$.
\end{lemma}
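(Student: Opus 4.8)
\emph{Plan.} I would base the argument on the elementary description of extremality: $X$ is extremal in $\PSD_G$ if and only if the only $P\in\cH_G$ with $\kr X\subseteq\kr P$ is a real multiple of $X$. (If such a $P$ were not a multiple of $X$, then $X\pm\ep P\in\PSD_G$ for all small $\ep>0$, so $X$ would not be extremal; conversely $X=X_1+X_2$ with $X_i\in\PSD_G$ forces $\kr X\subseteq\kr X_i$.) The degenerate case $\kr X=\{0\}$ is disposed of at once: then every $P\in\cH_G$ is admissible and $\dim_\BR\cH_G\geq 2$, so $X$ is not extremal. Hence from now on $\cN:=\kr(I_{p_2}-C^*C)\neq\{0\}$, and a direct computation gives $\kr X=\{(-Cy,y):y\in\cN\}$.

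\emph{$C$ is unitary.} Suppose not. Since then $C^*C\neq I_{p_2}$ or $CC^*\neq I_{p_1}$, by symmetry (swapping the two paired cliques of $G$, i.e.\ replacing $X$ by $\sbm{I_{p_2}&C^*\\ C&I_{p_1}}$, which represents a graph isomorphic to $G$) I may assume $C^*C\neq I_{p_2}$, i.e.\ $\cN\subsetneq\BF^{p_2}$. The idea is to perturb in the lower-right corner. Choose $0\neq w=(w_1,w_2)\in\cN^\perp$, split along $\BF^{p_2}=\BF^{p_{2,1}}\oplus\BF^{p_{2,2}}$, and set $P_2:=w_1w_1^*\oplus(-w_2w_2^*)$; this is a nonzero block-diagonal Hermitian $p_2\times p_2$ matrix whose compression to $\cN$ vanishes, because $\langle n_1,w_1\rangle=-\langle n_2,w_2\rangle$ for every $n=(n_1,n_2)\in\cN$. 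With $\cM:=\kr(I_{p_1}-CC^*)=C\cN$, on which $C$ restricts to a bijection onto $\cM$, I would define $Q$ by $Q^*=P_2\circ(C|_\cN)^{-1}$ on $\cM$ and $Q^*=0$ on $\cM^\perp$. Since $P_2$ sends $\cN$ into $\cN^\perp$, this gives $\ran Q^*\subseteq\cN^\perp$, hence $\cN\subseteq\kr Q$, and one then checks that $P:=\sbm{0&Q\\ Q^*&P_2}$ lies in $\cH_G$ and satisfies $\kr X\subseteq\kr P$, while $P\neq0$ has vanishing top-left block and so is not a multiple of $X$. This contradicts extremality, so $C$ is unitary; in particular $p_1=p_2$ and $\rank X=p_1+\rank(I_{p_2}-C^*C)=p_1=p_2=:p$.

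\emph{$p$ is even and $p_{i,j}=p/2$.} Write $X=W^*W$ with $W=[W_1\ W_2]$; since $p_1=p_2=p$, the isometries $W_1,W_2$ are unitary $p\times p$ matrices, so the spans $\cS_{i,1},\cS_{i,2}$ of the columns of $W_i$ belonging to the two sub-cliques give orthogonal decompositions $\BF^p=\cS_{i,1}\oplus\cS_{i,2}$ for $i=1,2$, with $\dim\cS_{i,j}=p_{i,j}$. Under the bijection $R\mapsto W^*RW$ from Hermitian $p\times p$ matrices onto $\{P\in\cH:\kr X\subseteq\kr P\}$, membership in $\cH_G$ corresponds to $P_{\cS_{i,1}}RP_{\cS_{i,2}}=0$ for $i=1,2$; so by extremality the only such $R$ is a scalar multiple of $I_p$. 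If some $\cS_{1,i}\cap\cS_{2,j}$ contained a nonzero $v$, then $R:=vv^*$ would qualify (it annihilates $\cS_{1,3-i}$ and maps $\cS_{1,i}$ into itself, and likewise for the other pair since $v\in\cS_{2,j}$) yet have rank $1<p$, which is impossible. Hence $\cS_{1,i}\cap\cS_{2,j}=\{0\}$ for all $i,j$, and the dimension count carried out in Part~2 of the proof of Proposition~\ref{P:dim2subs} forces $p$ to be even with $p_{i,j}=\dim\cS_{i,j}=p/2$.

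\emph{Main obstacle.} The hard part is the second step. The naive attempt is to perturb with a nonzero block-diagonal Hermitian that annihilates all of $\cN$, but $\cN^\perp$ need not contain any vector supported on a single sub-clique, so no such matrix may exist. The decisive observation is that one only needs a block-diagonal Hermitian whose \emph{compression} to $\cN$ is zero; the rank-$\leq2$ matrix $w_1w_1^*\oplus(-w_2w_2^*)$ built from a vector $w\in\cN^\perp$ achieves exactly this, and the unconstrained off-diagonal block $Q$ then absorbs the remaining part of the condition $\kr X\subseteq\kr P$.
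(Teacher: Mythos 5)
Your proof is correct, but it reaches the conclusions in the opposite order from the paper and by genuinely different means. The paper works entirely in the frame picture: it factors $X=W^*W$ with $W=[W_1\ W_2]$ of full row rank $p$, observes $p_1,p_2\leq p$, and shows by a rank count that if some $p_{i,j}<p/2$ there is a unit vector $v$ with $W_{i,j}^*v=0=W_{i',j'}^*v$ for $i\neq i'$, so that $X=W^*(I-vv^*)W+W^*vv^*W$ is a forbidden decomposition; this forces $p_{i,j}=p/2$ \emph{first}, whence $p_1=p_2=p$, the $W_i$ are square isometries, and $C=W_1^*W_2$ is unitary essentially for free. You instead prove unitarity of $C$ first, directly in the matrix picture, using the facial characterization of extremality ($\{P\in\cH_G:\kr X\subseteq\kr P\}=\BR X$) and the explicit perturbation $P=\sbm{0&Q\\ Q^{*}&P_2}$ built from a vector in $\cN^\perp$; I checked the key cancellation $\langle P_2n,m\rangle=0$ for $n,m\in\cN$ (the opposite signs on the two rank-one pieces make the cross terms cancel against $w_1^*n_1=-w_2^*n_2$), the membership $P\in\cH_G$, and the kernel condition $\kr X\subseteq\kr P$, and they all go through; the Schur complement then gives $\rank X=p_1=p_2$. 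Your final step recovers $p_{i,j}=p/2$ by showing the four column spaces $\cS_{i,j}$ intersect trivially and quoting the dimension count from Part~2 of the proof of Proposition~\ref{P:dim2subs}. Your route costs more work for the unitarity step than the paper's one-line deduction, but it buys two things: the perturbation characterization of extremality is stated explicitly and is reusable elsewhere, and the link to Proposition~\ref{P:dim2subs} becomes transparent, since the trivial-intersection condition you derive is exactly the nondegenerate case treated there and explains why that proposition is the right tool for Proposition~\ref{P:rank2}.
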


\begin{proof}[\bf{Proof}]
Factor $X=W^*W$ with $W=\mat{cc}{W_1 & W_2}$ and $W_i=\mat{cc}{W_{i,1} & W_{i,2}}$, where $W$ decomposes according to the decomposition of $X$ and $W_i$ according to the decomposition of $C$ in \eqref{XdecF2}. Then $W_i^* W_i=I_{p_i}$, hence $W_i$ is an isometry. Assume $W$ has full row rank, hence $W$ maps onto $\BF^p$ with $p=\rank X$. This implies $p_1,p_2\leq p$. Hence for both $i=1,2$ either $p_{i,1}\leq p/2$ or $p_{i,2}\leq p/2$. Assume there exists $i,j\in\{1,2\}$ so that $p_{i,j}<p/2$, which is always the case if $p$ is odd. Choose $i',j'\in\{1,2\}$ with $i\neq i'$ so that $p_{i',j'}\leq p/2$. Then
\[
\rank \mat{c}{W_{i,j}^*\\ W_{i',j'}^*}\leq p_{i,j}+p_{i',j'}<p, \quad \mbox{hence}\quad \kr \mat{c}{W_{i,j}^*\\ W_{i',j'}^*}\neq\{0\}.
\]
Take $v\in \BF^p$, $\|v\|=1$ so that $W_{i,j}^*v=0$ and $W_{i',j'}^*v=0$. Since $i\neq i'$, we obtain that $W_k^* vv^* W_{k}=\sbm{* &0 \\0 &*}$ for both $k=1,2$. Hence $X_1=W^*(I-vv^*)W$ and $X_2=W^*vv^* W$ are in $\PSD_G$. Moreover, $\rank X_2=1$, since $W$ has full row rank, hence $X_1$ and $X_2$ are no positive scalar multiples of $X$. This is in contradiction with the extremality of $X$. Hence $p_{i,j}=p/2$ for all $i,j\in\{1,2\}$. Then $p_1=p_2=p$. We already observed that $W_1$ and $W_2$ are isometries. Now, $p_1=p_2=p$ implies $W_1$ and $W_2$ are square, hence unitary. Then $C= W_1^*W_2$ will also be unitary.
 \end{proof}

Using Proposition \ref{P:dim2subs} and the fact the $C$ in \eqref{XdecF2} is unitary whenever $X$ is extremal in $\PSD_G$, we prove the following result.

\begin{proposition}\label{P:rank2}
Let $X$ in \eqref{XdecF2} be extremal. Then $\rank(X)\leq 2$.
\end{proposition}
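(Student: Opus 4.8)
The plan is to suppose $X$ in \eqref{XdecF2} is extremal and derive that $\rank(X)\le 2$ by showing that if $\rank(X)=p>2$ then $X$ can be nontrivially decomposed inside $\PSD_G$. By Lemma \ref{L:unitary} we already know that $p$ is even, $p_1=p_2=p$, $p_{i,j}=p/2$ for $i,j=1,2$, and that $C$ is unitary. The key idea is that the unitarity of $C$, together with the block structure of $X$ dictated by $G\in\cK_2$, lets us build many decompositions $X=X_1+X_2$ of the right form provided we can locate a two-dimensional subspace of $\BF^p$ that interacts well with \emph{both} the $X$-block splitting $\BF^p=\cX_1\oplus\cX_2$ (where $\cX_i$ corresponds to the partition $p_1=p_{1,1}+p_{1,2}$) and with the image under $C$ of the $Y$-block splitting $\BF^p=\cY_1\oplus\cY_2$ (where $\cY_i$ corresponds to $p_2=p_{2,1}+p_{2,2}$). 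This is exactly what Proposition \ref{P:dim2subs} supplies.

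More concretely, first I would fix a factorization $X=W^*W$ with $W=\mat{cc}{W_1 & W_2}$ of full row rank onto $\BF^p$, so that (as in the proof of Lemma \ref{L:unitary}) $W_1,W_2$ are unitary $p\times p$ matrices and $C=W_1^*W_2$. Reading off the structural zeros of $X$ coming from $G\in\cF_2$: the $(1,1)$-to-$(2,2)$ blocks $C_{1,1}$ are unconstrained but the block $C$ must respect the bipartite pattern of $\cF_2$, which forces the two $p/2$-dimensional coordinate subspaces $\cX_1=W_1(\spn\{e_1,\dots,e_{p/2}\})$, $\cX_2=W_1(\spn\{e_{p/2+1},\dots,e_p\})$ and $\cY_1=W_2(\ldots)$, $\cY_2=W_2(\ldots)$ to be orthogonal decompositions of $\BF^p$. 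Apply Proposition \ref{P:dim2subs} to these two orthogonal decompositions to obtain a two-dimensional $\cZ\subseteq\BF^p$ with $\cZ=(\cZ\cap\cX_1)\oplus(\cZ\cap\cX_2)=(\cZ\cap\cY_1)\oplus(\cZ\cap\cY_2)$. Let $Q$ be the orthogonal projection of $\BF^p$ onto $\cZ$ and set $X_1=W^*QW$, $X_2=W^*(I-Q)W$. Then $X=X_1+X_2$ and $X_1,X_2\ge 0$ with $\rank X_1=\dim\cZ=2$.

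The heart of the argument — and the step I expect to be the main obstacle — is verifying that $X_1$ and $X_2$ actually lie in $\PSD_G$, i.e. that they have the same structural zero pattern as $X$. This is where the splitting property \eqref{2waysplit} of $\cZ$ is used: the $(1,1)$/$(2,2)$ off-diagonal zero blocks inside the first diagonal block of $X$ correspond to the decomposition $\BF^p=\cX_1\oplus\cX_2$, and $X_1=W^*QW$ inherits that zero block precisely because $Q$ (equivalently $\cZ$) splits as $(\cZ\cap\cX_1)\oplus(\cZ\cap\cX_2)$, which makes $P_{\cX_1}QP_{\cX_2}=0$ after transporting through the unitary $W_1$; similarly for the second diagonal block and $\BF^p=\cY_1\oplus\cY_2$ via $W_2$. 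Once this bookkeeping is done, $X_1,X_2\in\PSD_G$ and, since $\rank X_1=2<p=\rank X$, neither is a positive scalar multiple of $X$. This contradicts extremality of $X$ unless $p\le 2$, which is the claim.

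Finally, I would note the degenerate cases: if some $p/2=0$ then $p=0$ and there is nothing to prove; if $p/2=1$ then $p=2$ and again $\rank X\le 2$ trivially. So the only case requiring work is $p\ge 4$, handled above. I expect the only genuinely delicate point to be keeping the two coordinate frames straight (the $\cX$-frame is the one in which $W_1$ is written, the $\cY$-frame the one in which $W_2$ is written, and $\cZ$ lives in the common target $\BF^p$), and confirming that Proposition \ref{P:dim2subs} is being applied to genuinely \emph{orthogonal} decompositions — which it is, since $W_1$ and $W_2$ are unitary, so they carry the orthogonal coordinate splittings of domain space to orthogonal splittings of $\BF^p$.
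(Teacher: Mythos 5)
Your proposal is correct and follows essentially the same route as the paper: invoke Lemma \ref{L:unitary} to get $C$ unitary with all blocks of size $p/2$, apply Proposition \ref{P:dim2subs} to the two orthogonal decompositions of $\BF^p$ induced by the column splittings of $W_1$ and $W_2$, and use the resulting rank-two projection $Q$ to write $X=W^*QW+W^*(I-Q)W$ as a sum of two elements of $\PSD_G$, forcing $\rank X=2$ by extremality. The paper merely normalizes $W_1=I_p$ (so $W=\sbm{I_p & C}$) and encodes the compatibility of $Q$ with the two block structures via signature matrices $J_1$ and $J_2=CJ_2'C^*$, which is the same bookkeeping you carry out directly with the subspaces $\cX_i$ and $\cY_i$.
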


\begin{proof}[\bf Proof.]
Since $X$ is extremal, $C$ in \eqref{XdecF2} is unitary and hence $X$ factors as
\[
X=\mat{c}{I_p\\ C^*}\mat{cc}{I_p & C}.
\]
Define
\[
J_1=\mat{cc}{I_{p_{1,1}}&0\\0& -I_{p_{1,2}}}\ands
J_2= C J_2' C^*\quad\mbox{with}\quad J_2'=\mat{cc}{I_{p_{2,1}}&0\\0& -I_{p_{2,2}}}.
\]
Then $J_1$ and $J_2$ are signature matrices of size $p\times p$. Let $\cZ$ be a subspace of $\BF^p$ that splits over both orthogonal sum decompositions of $\BF^p$ associated with $J_1$ and $J_2$, which exists by Proposition \ref{P:dim2subs}, and write $Q$ for the orthogonal projection on $\cZ$. Then $J_i Q J_i= Q$ for $i=1,2$. Since $J_2= C J_2' C^*$ and $C$ is unitary, this implies $J_2' C^* Q C J_2'=C^* Q C $. However, the identities $J_1 Q J_1= Q$ and $J_2' C^* Q C J_2'=C^* Q C$ imply that $Q$ and $C^* Q C$ have a block diagonal form with respect to the decompositions $p=p_{1,1}+p_{1,2}$ and $p=p_{2,1}+p_{2,2}$, respectively. Therefore
\[
X_1=\mat{c}{I_p\\ C^*}Q\mat{cc}{I_p & C}=\mat{cc}{Q& QC\\ C^*Q & C^*QC}
\]
and
\[
X_2=\mat{c}{I_p\\ C^*}(I-Q)\mat{cc}{I_p & C}
\]

are both in $\PSD_G$ and $X=X_1+X_2$. Since $X$ is extremal, $X_1$ is a positive scalar multiple of $X$. However, $\rank X_1=2$, so that $\rank X=2$.
\end{proof}

\paragraph{\bf The case $G\in \cK_3$}
Let $G\in\cK_3$. Assume $X$ is extremal in $\PSD_G$ and of the form \eqref{Xdec2} with $k=3$ and all entries of size $2\times 2$. Again, factor $X$ as $X=W^*W,$ where $\rank(X)=\rank(W)=p$ and decompose $W$ as
\[
W=\begin{bmatrix} W_1 & W_2 & W_3\end{bmatrix} \quad\text{ with }\quad W_i=\begin{bmatrix} W_{i,1} & W_{i,2}\end{bmatrix}:\BB{C}^2\mapsto\BB{C}^p.
\]
Then $W_i^*W_i=I_2$, hence $W_i$ is isometric, and $W_{i,1}^*W_{i,2}=0,$ for $i=1,2,3.$ The proof of the following result is based on the ideas behind the proof of the Dimension Theorem, but instead provides an explicit construction.

\begin{proposition}\label{P:K3rank2}
Let $X\in \PSD_G$, with $G\in\cK_3$, be extremal. Then $\rank(X)\leq 2$.
\end{proposition}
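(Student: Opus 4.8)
The plan is to argue by contradiction: suppose $X$ is extremal with $p:=\rank X\ge 3$, and produce a decomposition $X=X_1+X_2$ inside $\PSD_G$ in which $X_1$ is \emph{not} a scalar multiple of $X$. I keep the set-up already in place: $X=W^*W$ with $W=[\,W_1\ W_2\ W_3\,]$ of full row rank onto $\BF^p$, $W_i=[\,W_{i,1}\ W_{i,2}\,]$, so that $\{W_{i,1},W_{i,2}\}$ is an orthonormal pair of vectors in $\BF^p$ for $i=1,2,3$. The observation to record is that, for a $p\times p$ Hermitian matrix $R$, one has $W^*RW\in\cH_G$ precisely when each $2\times2$ block $W_i^*RW_i$ is diagonal, i.e.\ when $W_{i,1}^*RW_{i,2}=0$ for $i=1,2,3$; every other off-diagonal entry of $W^*RW$ sits at an edge of $G$ and is unconstrained. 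Moreover $W^*RW\ge 0$ as soon as $0\le R\le I_p$.

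The heart of the argument is a dimension count. I would let $\cV$ denote the real-linear space of $p\times p$ Hermitian matrices $R$ with $W_{i,1}^*RW_{i,2}=0$ for $i=1,2,3$. For each $i$ the map $R\mapsto W_{i,1}^*RW_{i,2}$ is an $\BR$-linear functional from the Hermitian matrices onto $\BF$: using $W_{i,1}^*W_{i,2}=0$ and $\|W_{i,l}\|=1$, it sends the Hermitian matrix $W_{i,2}W_{i,1}^*+W_{i,1}W_{i,2}^*$ to $1$ and, when $\BF=\BC$, sends the Hermitian matrix $\sqrt{-1}\,(W_{i,2}W_{i,1}^*-W_{i,1}W_{i,2}^*)$ to $-\sqrt{-1}$. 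Hence each of the three conditions drops the real dimension by $\dim_\BR\BF$, so $\dim_\BR\cV\ge p^2-6$ if $\BF=\BC$ and $\dim_\BR\cV\ge\tfrac12 p(p+1)-3$ if $\BF=\BR$; in either case $\dim_\BR\cV\ge 3$ once $p\ge 3$. Since $\BR I_p\subseteq\cV$ is one-dimensional, there is some $R_0\in\cV$ that is not a scalar multiple of $I_p$.

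To finish, I would replace $R_0$ by $R:=\tfrac12 I_p+\varepsilon R_0$ with $\varepsilon>0$ small enough that all eigenvalues of $R$ lie in $(0,1)$; then $R\in\cV$ remains non-scalar and $0<R<I_p$. Put $X_1:=W^*RW$ and $X_2:=W^*(I_p-R)W$. Both are positive semidefinite, and both lie in $\cH_G$, since the block-$i$ $(1,2)$ entries are $W_{i,1}^*RW_{i,2}=0$ and $W_{i,1}^*W_{i,2}-W_{i,1}^*RW_{i,2}=0$ respectively; thus $X=X_1+X_2$ is a decomposition inside $\PSD_G$. Finally, $W$ maps onto $\BF^p$, so $WW^*$ is invertible, and $X_1=\lambda X$ would force $(WW^*)R(WW^*)=\lambda(WW^*)^2$, i.e.\ $R=\lambda I_p$, contradicting that $R$ is non-scalar. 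Hence $X_1$ is not a positive scalar multiple of $X$, contradicting extremality of $X$. Therefore $\rank X\le 2$.

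The one delicate point I anticipate is the dimension count itself — i.e.\ verifying that the three conditions on $R$ really cut the dimension down as claimed, which amounts precisely to the surjectivity of each functional $R\mapsto W_{i,1}^*RW_{i,2}$ onto $\BF$ — together with the small technicality of keeping $R$ inside $[0,I_p]$ while remaining non-scalar, which is why one perturbs about $\tfrac12 I_p$ rather than using $R_0$ directly. Everything else is bookkeeping. (If a fully explicit witness is preferred over the existence argument: whenever some triple $W_{1,c(1)},W_{2,c(2)},W_{3,c(3)}$ with $c\in\{1,2\}^3$ fails to span $\BF^p$ — automatic as soon as $p\ge 4$ — any unit vector $v$ orthogonal to it gives the rank-one matrix $R_0=vv^*\in\cV$ outright, so the dimension count is only needed for $p=3$ with all eight such triples spanning.)
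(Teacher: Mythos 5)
Your proposal is correct and follows essentially the same route as the paper's proof: the same decomposition $X=W^*RW+W^*(I-R)W$, the same real-dimension count on the space of Hermitian $R$ satisfying $W_{i,1}^*RW_{i,2}=0$ (six real equations in nine real unknowns when $p=3$ and $\BF=\BC$, three in six when $\BF=\BR$) to produce a non-scalar solution, and the same final shift-and-scale to place $R$ in $[0,I_p]$ (the paper uses $(\rho I+B)/\|\rho I+B\|$ where you use $\tfrac12 I_p+\varepsilon R_0$, and it likewise dispatches $p\ge 4$ with the explicit rank-one $vv^*$). The only cosmetic difference is that you phrase the count abstractly via surjective functionals rather than writing out the entries, and you make the "not a scalar multiple" step explicit via invertibility of $WW^*$.
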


\begin{proof}[\bf Proof.]
Assume $X$ and $W$ are as in the paragraph preceding the proposition, as we may do without loss of generality. Assume $p=\rank (X)\geq 3$. We will construct a $p\times p$ matrix $R$ with $0\leq R\leq I$, $R\neq \la I$ for all $\la>0$, and $W_{i,1}^*RW_{i,2}=0,$ for $i=1,2,3$. Once such an $R$ is obtained, we can write $X$ as $X=X_1+X_2$ with $X_1=W^*RW$ and $X_2=W^*(I-R)W$ both in $\PSD_G$ and not scalar multiples of $X,$ contradicting the extremality of $X$. In other words, any extremal element of $\PSD_G$ must have rank at most 2.

If $p\geq 4$, simply take $v\perp W_{i,1},$ $i=1,2,3,$ and set $R=vv^*$. Hence, we need but consider the case $p=3$. Let $\BF=\BC$. We will first determine a Hermitian matrix $B$ such that $W_{i,1}^*BW_{i,2}=0$ for $i=1,2,3$ and $B\neq\la I,$ for all $\la\in\BR$. A  Hermitian matrix $B$ has the form
$$B=\begin{bmatrix}
r_1 & c_1 & c_2\\
\ol{c}_1 & r_2 & c_3\\
\ol{c}_2 & \ol{c}_3 & r_3
\end{bmatrix}$$
with $r_i\in\BR$ and $c_i\in\BC$, $i=1,2,3.$ Write out
$$W_{i,j}=\begin{bmatrix}
w_{i,j}^{(1)}\\
w_{i,j}^{(2)}\\
w_{i,j}^{(3)}
\end{bmatrix}.$$
Computing $W_{i,1}^*BW_{i,2}$ we obtain the following equation
\begin{align*}
W_{i,1}^*BW_{i,2} &=\ol{w}_{1,1}^{(1)}w_{2,2}^{(1)}r_1+\ol{w}_{1,1}^{(2)}w_{2,2}^{(2)}r_2+\ol{w}_{1,1}^{(3)}w_{2,2}^{(3)}r_3
+\ol{w}_{1,1}^{(1)}w_{2,2}^{(2)}c_1+\ol{w}_{1,1}^{(2)}w_{2,2}^{(1)}\ol{c}_1\\
&\quad+\ol{w}_{1,1}^{(1)}w_{2,2}^{(3)}c_2+\ol{w}_{1,1}^{(3)}w_{2,2}^{(1)}\ol{c}_2
+\ol{w}_{1,1}^{(2)}w_{2,2}^{(3)}c_3+\ol{w}_{1,1}^{(3)}w_{2,2}^{(2)}\ol{c}_3.
\end{align*}
Now, setting $W_{i,1}^*BW_{i,2}=0$ and splitting all elements in the equation into a real and imaginary part we obtain two homogeneous linear equations, with real coefficients and real variables. Doing this for $i=1,2,3$ we have six equations in nine real variables. Consequently, the solution space of this system of equations has a dimension of at least 3. Leaving out the scalar multiples of $I_3$, one still has a two dimensional space to chose an Hermitian matrix $B$ from with the desired properties. Now set $R=\frac{\rho I+B}{\|\rho I+B\|}$ with $\rho=\|B\|$. Then $R\geq 0$, $\|R\|=1$, so that $R\leq I$, and $R\neq \la I$ for all $\la\geq0$. Furthermore, since the solution set of the equations $W_{i,1}^* S W_{i,2}=0$, $i=1,2,3$, is linear and both $B$ and $I$ are solutions, so is $R$. We conclude $R$ has the the desired properties. This completes the proof for the case $\BF=\BC$. If $\BF=\BR$, in a similar way one obtains three linear equations with six variables, from which one can again construct an $R\geq 0$ with the required properties.
\end{proof}

As a side result of the above proof, we obtain the following corollary, cf., Lemma \ref{L:unitary} for the case $\cK_2$.

\begin{corollary}
Assume $X\in \PSD_G$, with $G\in\cK_3$, is of the form \eqref{Xdec2}, i.e., with $k=3$ and all entries of size $2\times 2$. If $X$ is extreme, then $C_{1,2}$, $C_{1,3}$ and $C_{2,3}$ are unitary. Conversely, if $C_{1,2}$, $C_{1,3}$ and $C_{2,3}$ are unitary, then $C_{1,2}^*C_{1,3}=C_{2,3}$, $X$ factors as
\[
X=\mat{c}{I\\ C_{1,2}\\ C_{2,3}}\mat{ccc}{I & C_{1,2} & C_{2,3}},
\]
and hence $\rank (X)= 2$.
\end{corollary}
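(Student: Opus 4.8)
The statement splits into two implications; the first is essentially a reading of Proposition~\ref{P:K3rank2}, while the work lies in the converse. For the first implication, suppose $X$ in \eqref{Xdec2} (with $k=3$ and all blocks $2\times2$) is extreme. The plan is to factor $X=W^*W$ with $W=\mat{ccc}{W_1&W_2&W_3}$ of full row rank onto $\BF^p$, $p=\rank X$, exactly as in the paragraph preceding Proposition~\ref{P:K3rank2}. The diagonal blocks $I_2$ of $X$ give $W_i^*W_i=I_2$, so each $W_i$ is an isometry from $\BF^2$ and hence $p\geq 2$. Since $X$ is extreme, Proposition~\ref{P:K3rank2} forces $p\leq 2$, so $p=2$ and each $W_i$ is a $2\times 2$ unitary. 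As the $(i,j)$-block of $X=W^*W$ equals $W_i^*W_j$, we get $C_{i,j}=W_i^*W_j$, a product of unitaries, hence unitary.

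For the converse, assume $C_{1,2},C_{1,3},C_{2,3}$ are unitary. I would first isolate the relation $C_{2,3}=C_{1,2}^*C_{1,3}$ by a Schur complement with respect to the $(1,1)$-block. Writing $X=\mat{cc}{I_2&D\\ D^*&E}$ with $D=\mat{cc}{C_{1,2}&C_{1,3}}$ and $E=\mat{cc}{I_2&C_{2,3}\\ C_{2,3}^*&I_2}$, positivity of $X$ (the pivot $I_2$ being positive definite) is equivalent to $E-D^*D\geq 0$. Unitarity of $C_{1,2}$ and $C_{1,3}$ kills the diagonal blocks $I_2-C_{1,2}^*C_{1,2}$ and $I_2-C_{1,3}^*C_{1,3}$ of $E-D^*D$, leaving $E-D^*D=\mat{cc}{0&N\\ N^*&0}$ with $N=C_{2,3}-C_{1,2}^*C_{1,3}$; since a Hermitian matrix of this shape is positive semidefinite only when $N=0$, we conclude $C_{2,3}=C_{1,2}^*C_{1,3}$.

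With this identity in hand, substituting it and using $C_{1,2}^*C_{1,2}=C_{1,3}^*C_{1,3}=I_2$, a direct block multiplication gives $X=\mat{c}{I\\ C_{1,2}^*\\ C_{1,3}^*}\mat{ccc}{I&C_{1,2}&C_{1,3}}$, whose right-hand factor has rank $2$ because it contains $I_2$; hence $\rank X=2$. An equivalent route, bypassing the Schur complement, is to write $X=V^*V$ with $V=\mat{ccc}{V_1&V_2&V_3}$ and $V_i^*V_i=I_2$: unitarity of $V_1^*V_2$ forces $\ran V_2\subseteq\ran V_1$ (as $\|V_1^*V_2x\|=\|x\|=\|V_2x\|$ means $V_2x\in\ran V_1$), and likewise $\ran V_3\subseteq\ran V_1$, so the three $2$-dimensional ranges coincide, each $V_i$ is a unitary on that common space, $V=V_1\mat{ccc}{I&C_{1,2}&C_{1,3}}$, and both $C_{2,3}=C_{1,2}^*C_{1,3}$ and the factorization follow.

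I expect the only genuine obstacle to be the converse, and within it the observation that, under unitarity of the off-diagonal blocks, positive semidefiniteness of $X$ is equivalent to the single multiplicative relation $C_{2,3}=C_{1,2}^*C_{1,3}$; the Schur-complement reduction is what makes this step short and transparent. The first implication is immediate from Proposition~\ref{P:K3rank2}, and everything downstream of the relation — the explicit factorization and the rank count — is a routine block computation.
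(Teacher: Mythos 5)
Your proof is correct, and for the converse it takes a genuinely different route from the paper's. The forward implication is identical: extremality gives $p=\rank X=2$ via Proposition \ref{P:K3rank2}, the square isometries $W_i$ become unitary, and $C_{i,j}=W_i^*W_j$ is unitary. For the converse, the paper simply writes $C_{1,2}^*C_{1,3}=W_2^*W_1W_1^*W_3=W_2^*W_3=C_{2,3}$ and says the factorization "follows directly"; this middle equality silently uses that $W_1W_1^*$ acts as the identity on $\ran W_3$ (equivalently $\ran W_3\subseteq\ran W_1$), which is exactly the point your "equivalent route" justifies via $\|V_1^*V_2x\|=\|x\|=\|V_2x\|$. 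Your primary argument instead takes the Schur complement of the $(1,1)$-block: unitarity of $C_{1,2},C_{1,3}$ forces $E-D^*D=\sbm{0&N\\N^*&0}$ with $N=C_{2,3}-C_{1,2}^*C_{1,3}$, and positive semidefiniteness of a Hermitian matrix with vanishing diagonal blocks kills $N$. This is self-contained, makes no reference to a factorization $X=W^*W$, and in fact shows the multiplicative relation is forced by positivity alone once the off-diagonal blocks are unitary --- a slightly stronger and more transparent statement than the paper's. One further point in your favour: your factorization $X=\sbm{I\\C_{1,2}^*\\C_{1,3}^*}\sbm{I&C_{1,2}&C_{1,3}}$ is the correct one; the factorization displayed in the corollary as stated does not reproduce the adjoints in the lower-triangular blocks (nor the identity diagonal blocks) and should be read as a typo for yours. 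The rank conclusion is unaffected.
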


\begin{proof}[\bf Proof.]
If $X\in \PSD_G$ is extremal, then $p=\rank(X)=2$. Hence the matrices $W_1$, $W_2$ and $W_3$ in the above proof are square isometries, hence unitary. Then $C_{1,2}=W_1^*W_2$,  $C_{1,3}=W_1^*W_3$ and $C_{2,3}=W_2^* W_3$ are also unitary.

Assume $C_{1,2}$, $C_{1,3}$ and $C_{2,3}$ are unitary. Again using the factorization $X=W^*W$, we have $C_{1,2}^*C_{1,3}=W_2^* W_1 W_1^* W_3=W_2^*W_3=C_{2,3}$. The factorization of $X$ now follows directly.
\end{proof}

\begin{proof}[\bf Proof of Theorem \ref{T:SO2complex} (iii) $\Rightarrow$ (i).]
Combining Propositions \ref{P:rank2} and \ref{P:K3rank2} we obtain that any extremal element of $\PSD_G$ for a $G\in \cK_2\cup \cK_3$ must have rank at most 2.
\end{proof}

\paragraph{\bf Acknowledgement}
The authors gratefully thank the anonymous referee for his/her useful comments and suggestions, in particular, for the proof of Proposition \ref{P:dim2subs} included in the paper, which is a significant improvement of the proof contained in the original manuscript.

This work is based on the research supported in part by the National Research Foundation of South Africa. Any opinion, finding and conclusion or recommendation expressed in this material is that of the authors and the NRF does not accept any liability in this regard.



\begin{thebibliography}{99}
	
\bibitem{AHMR88}	
J. Agler, J.W. Helton, S. McCullough, and L. Rodman, Positive semidefinite matrices with a given sparsity pattern, {\em Linear Algebra Appl.} {\bf 107} (1988), 101--149.

\bibitem{AD08}
D.Z. Arov and H. Dym, {\em $J$-contractive matrix valued functions and related topics}, Encyclopedia of Mathematics and its Applications {\bf 116}, Cambridge University Press, Cambridge, 2008.

\bibitem{BLS99}
A. Brandst\"{a}dt, V.B. Le, and J.P. Spinrad, {\em Graph Classes: A Survey}, SIAM Monographs on Discrete Mathematics and Applications, SIAM, Philadelphia, PA, 1999.

\bibitem{BW11}
M. Bakonyi and H.J. Woerdeman, {\em Matrix completions, moments, and sums of Hermitian squares}, Princeton University Press, Princeton, New Jersey, 2011.

\bibitem{BM13}
M. Buratti and F. Merola, Dihedral Hamiltonian cycle systems of the cocktail party graph, {\em J. Combin.\ Des.} {\bf 21} (2013), 1--23.

\bibitem{BM09}
M. Buratti and F. Merola, A non-existence result on cyclic cycle-decompositions of the cocktail party graph, {\em Discrete Math.} {\bf 309} (2009), 4722--4726.

\bibitem{CLMS15}
M. Changat, D.S. Lekha, H.M. Mulder, and A.R. Subhamathi, Axiomatic characterization of the median and antimedian functions on cocktail-party graphs and complete graphs, in: {\em Algorithms and discrete applied mathematics}, pp.\ 138--149, {\em Lecture Notes in Comput.\ Sci.} {\bf 8959}, Springer, Cham, 2015.

\bibitem{D61}
G.A. Dirac, On rigid circuit graphs, {\em Abhandlungen aus dem {M}athematischen {S}eminar der {U}niversit\"{a}t {H}amburg}, {\bf 25} (1961), 71--76	

\bibitem{D66}
R.G. Douglas, On majorization, factorization, and range inclusion of operators on Hilbert space, {\em Proc.\ Amer.\ Math.\ Soc.} {\bf 17} (1966), 413--415.

\bibitem{GMW86}
D.A. Gregory, S. McGuinness, W. Wallis, Clique partitions of the cocktail party graph, {\em Discrete Math.} {\bf 59} (1986), 267--273.

\bibitem{GJSW84}
R. Grone, C.R. Johnson, E.M. S\'{a}, and H. Wolkowicz, Positive definite completions of partial hermitian matrices, {\em Linear Algebra Appl.} {\bf 58} (1984), 109--124.

\bibitem{HPR89}
J.W. Helton, S. Pierce, and L. Rodman, The ranks of extremal positive semidefinite matrices with given sparsity pattern, {\em SIAM J.\ Matrix Anal.\ Appl.} {\bf 10} (1989), 407--423.

\bibitem{L01}
M. Laurent, On the sparsity order of a graph and its deficiency in chordality, {\em Combinatorica} {\bf 21} (2001), 543--570.

\bibitem{McM88}
S. McCullough, 2-chordal graphs, {\em  Oper.\ Theory Adv.\ Appl.} {\bf 35} (1988), 143--192.


\bibitem{S-M94}
N. Shaked-Monderer, Extremal positive semidefinite matrices with given sparsity pattern, {\em Linear and Multilinear Algebra} {\bf 36} (1994), 287--292.

\end{thebibliography}
\end{document}